\title{Multi-objective risk-averse two-stage\\ stochastic programming problems}
\author{\c{C}a\u{g}{\i}n Ararat\thanks{Bilkent University, Department of Industrial Engineering, Ankara, Turkey.} \thanks{\c{C}.~Ararat and \"{O}.~\c{C}avu\c{s} contributed equally to this work.}
\and
\"{O}zlem \c{C}avu\c{s}\footnotemark[1] \footnotemark[2]
\and
Ali \.{I}rfan Mahmuto\u{g}ullar{\i}\footnotemark[1]
}
\date{November 15, 2017}
\makeatletter \renewenvironment{proof}[1][\proofname] {\par\pushQED{\qed}\normalfont\topsep6\p@\@plus6\p@\relax\trivlist\item[\hskip\labelsep\bfseries#1\@addpunct{.}]\ignorespaces}{\popQED\endtrivlist\@endpefalse} \makeatother
\newtheorem{theorem}{Theorem}[section]
\newtheorem{lemma}[theorem]{Lemma}
\newtheorem{proposition}[theorem]{Proposition}
\newtheorem{definition}[theorem]{Definition}
\theoremstyle{definition}
\newtheorem{example}[theorem]{Example}
\newtheorem{remark}[theorem]{Remark}
\numberwithin{equation}{section}
\newcommand{\R}{\mathbb{R}}
\newcommand{\sm}{\!\setminus\!}
\DeclareMathOperator{\cl}{cl}
\DeclareMathOperator{\co}{co}
\DeclareMathOperator{\maxi}{maximize}
\DeclareMathOperator{\argmin}{arg\,min}
\DeclareMathOperator{\argmax}{arg\,max}
\let\abs=\envert
\newcommand{\W}{\mathscr{W}}
\newcommand{\J}{\mathcal{J}}
\newcommand{\G}{\mathcal{G}}
\newcommand{\F}{\mathcal{F}}
\newcommand{\X}{\mathcal{X}}
\newcommand{\A}{\mathcal{A}}
\renewcommand{\L}{\mathbb{L}}
\renewcommand{\P}{\mathscr{P}}
\newcommand{\D}{\mathscr{D}}
\newcommand{\E}{\mathbb{E}}
\newcommand{\M}{\mathbb{M}}
\renewcommand{\a}{\alpha}
\renewcommand{\b}{\beta}
\renewcommand{\O}{\Omega}
\newcommand{\1}{\mathbf{1}}
\newcommand{\of}[1]{\ensuremath{\left( #1 \right)}}
\newcommand{\cb}[1]{\ensuremath{ \left\{ #1 \right\} }}
\newcommand{\sqb}[1]{\ensuremath{ \left[ #1 \right] }}
\newcommand{\norm}[1]{\ensuremath{ \left\Vert #1 \right\Vert }}
\def\prehp(#1,#2){\ensuremath{  #1 \cdot #2 }}
\begin{document}

\maketitle
\thispagestyle{empty}

\renewcommand{\sectionmark}[1]{}

\begin{abstract}
We consider a multi-objective risk-averse two-stage stochastic programming problem with a multivariate convex risk measure. We suggest a convex vector optimization formulation with set-valued constraints and propose an extended version of Benson's algorithm to solve this problem. Using Lagrangian duality, we develop scenario-wise decomposition methods to solve the two scalarization problems appearing in Benson's algorithm. Then, we propose a procedure to recover the primal solutions of these scalarization problems from the solutions of their Lagrangian dual problems. Finally, we test our algorithms on a multi-asset portfolio optimization problem under transaction costs.\\
\\[-5pt]
\textbf{Keywords and phrases: }multivariate risk measure, multi-objective risk-averse two-stage stochastic programming, risk-averse scalarization problems, convex Benson algorithm, nonsmooth optimization, bundle method, scenario-wise decomposition\\
\\[-5pt]
\textbf{Mathematics Subject Classification (2010): }49M27, 90C15, 90C25, 90C29, 91B30.
\end{abstract}

\section{Introduction}\label{intro}

We consider a \emph{multi-objective risk-averse two-stage stochastic programming problem} of the general form
\begin{align*}
&\text{min}\; \;       z\;\;\text{w.r.t.}\;\;{\R^J_+}\\
&\text{s.t. }\;\;                      z\in R(Cx + Qy)\\
& \quad\quad\;      (x,y)\in\X, z\in\R^J.
\end{align*}
In this formulation, $x$ is the \emph{first-stage} decision variable, $y$ is the \emph{second-stage} decision variable and $\X$ is a compact finite-dimensional set defined by linear constraints. $C, Q$ are cost parameters which are matrices of appropriate dimension. We assume that $C$ is deterministic and $Q$ is random. $R(\cdot)$ is a \emph{multivariate convex risk measure}, which is a set-valued mapping from the space of $J$-dimensional random vectors into the power set of $\R^J$ (\emph{see} \cite{hh:duality}). In other words, $R(Cx+Qy)$ is the set of deterministic cost vectors $z\in\R^J$ for which $Cx+Qy-z$ becomes acceptable in a certain sense.

The above problem is a \emph{vector optimization} problem and solving it is understood as computing the \emph{upper image} $\P$ of the problem defined by
\[
\P= \cl \cb{z\in\R^J\mid z\in R(Cx+Qy),\, (x,y)\in\X},
\]
whose boundary is the so-called \emph{efficient frontier}. Here, $\cl$ denotes the closure operator. One would be interested in finding a set $\mathcal{Z}$ of \emph{weakly efficient solutions} $(x,y,z)$ with $z\in R(Cx+Qy)$ for some $(x,y)\in\X$ such that there is no $z^{\prime}\in R(Cx^{\prime}+Qy^{\prime})$ with $(x^{\prime},y^{\prime})\in\X$ and $z^{\prime}<z$. Here, $``<"$ denotes the componentwise strict order in $\R^J$. The $z$ components of these solutions are on the efficient frontier. In addition, the set $\mathcal{Z}$ is supposed to construct $\mathscr{P}$ in the sense that
\[
\mathscr{P}=\cl\co(\cb{z\in\R^J\mid (x,y,z)\in\mathcal{Z}}+\R^J_+),
\]
where $\co$ denotes the convex hull operator. Our aim is to compute $\mathscr{P}$ approximately using a finite set of weakly efficient solutions.

Algorithms for computing upper images of vector optimization problems are extensively studied in the literature. A seminal contribution in this field is the algorithm for linear vector optimization problems by \cite{benson}, which computes the set of all weakly efficient solutions of the problem and works on the outer approximation of the upper image rather than the feasible region itself. Benson's algorithm has been generalized recently in \cite{ehrgottconvex} and \cite{convexbenson} for (ordinary) convex vector optimization problems, namely, optimization problems with a vector-valued objective function and a vector-valued constraint that are convex with respect to certain underlying cones, e.g., the positive orthants in the respective dimensions. While the algorithm in \cite{ehrgottconvex} relies on the differentiability of the involved functions, \cite{convexbenson} makes no assumption on differentiability and obtains finer approximations of the upper image by making use of the so-called \emph{geometric dual problem}.

In the literature, there is a limited number of studies on multi-objective two-stage stochastic optimization problems. Some examples of these studies are \cite{abbas}, \cite{supply}, where the decision maker is risk-neutral, that is, one takes $R(Cx+Qy)=\E\sqb{Cx+Qy}+\R^J_+$. In principle, multi-objective risk-neutral two-stage stochastic optimization problems with linear constraints and continuous variables can be formulated as linear vector optimization problems and they can be solved using the algorithm in \cite{benson}. If the number of scenarios is not too large, then the problem can be solved in reasonable computation time. Otherwise, one should look for an efficient method, generally, based on scenario decompositions.

To the best of our knowledge, for the risk-averse case, there is no study on multi-objective two-stage stochastic programming problems. However, single-objective mean-risk type problems can be seen as scalarizations of two-objective stochastic programming problems (\emph{see, for instance,} \cite{ahmed}, \cite{miller}). On the other hand, \cite{dentcheva}, \cite{noyankucukyavuz} work on single-objective problems with multivariate stochastic ordering constraints. As pointed out in the recent survey \cite{survey}, there is a need for a general methodology for the formulation and solution of multi-objective risk-averse stochastic problems.

The main contributions of the present study can be summarized as follows:
\begin{enumerate}[1.]
\item To the best of our knowledge, this is the first study focusing on multi-objective risk-averse two-stage stochastic programming problems in a general setting.
\item We propose a vector optimization formulation for our problem using multivariate convex risk measures. Such risk measures include, but are not limited to, multivariate coherent risk measures and multivariate utility-based risk measures.
\item To solve our problem, we suggest an extended version of the convex Benson algorithm in \cite{convexbenson} that is developed for a convex vector optimization problem with a vector-valued constraint. Different from \cite{convexbenson}, we deal with set-valued risk constraints and dualize them using the dual representation of multivariate convex risk measures (\emph{see} \cite{hh:duality}) and the Lagrange duality for set-valued constraints (\emph{see} \cite{borwein}).
\item The convex Benson algorithm in \cite{convexbenson} cannot be used for some multivariate risk measures, specifically, for higher-order nonsmooth risk measures. On the other hand, our method is general and can be used for any risk measures for which subgradients can be calculated. An example of such risk measures is higher-order mean semideviation (\emph{see} \cite{shapiro} and the references therein).
\item Two risk-averse two-stage stochastic scalarization problems, namely, the problem of weighted sum scalarization and the problem of scalarization by a reference variable, have to be solved during the procedure of the convex Benson algorithm. As the number of scenarios gets larger, these problems cannot be solved in reasonable computation time. Therefore, based on Lagrangian duality, we propose scenario-wise decomposable dual problems for these scalarization problems and suggest a solution procedure based on the bundle algorithm (\emph{see} \cite{lemarechal}, \cite{ruszbook} and the references therein).
\item Our scenario-wise decomposition algorithms for the scalarization problems can be embedded into other algorithms using the same type of scalarization problems. See \cite[Chapter~12]{jahnbook} for examples of such algorithms.
\item We propose a procedure to recover the primal solutions of the scalarization problems from the solutions of their Lagrangian dual problems.
\end{enumerate}

The rest of the paper is organized as follows: In Section~\ref{problemdefn}, we provide some preliminary definitions and results for multivariate convex risk measures. In Section~\ref{problem}, we provide the problem formulation and recall the related notions of optimality. Section~\ref{algorithms} is devoted to the convex Benson algorithm. The two scalarization problems in this algorithm are treated separately in Section~\ref{scalarization}. In particular, we propose scenario-wise decomposition algorithms and procedures to recover primal solutions. Computational results are provided in Section~\ref{computation}. Some proofs related to Section~\ref{scalarization} are collected in the appendix.

\section{Multivariate convex risk measures}\label{problemdefn}

We work on a finite probability space $\O = \cb{1, \ldots, I}$ with $I\geq 2$. For each $i\in\O$, let $p_i> 0$ be the probability of the elementary event $\cb{i}$ so that $\sum_{i\in\O} p_i = 1$.

Let us introduce the notation for (random) vectors and matrices. Let $J\geq 1$ be a given integer and $\J=\cb{1,\ldots,J}$. $\R^J_+$ and $\R^J_{++}$ denote the set of all elements of the Euclidean space $\R^J$ whose components are nonnegative and positive, respectively. For $w=(w^1,\ldots,w^J)^\mathsf{T},z=(z^1,\ldots,z^J)^\mathsf{T}\in\R^J$, their scalar product and Hadamard product are defined as
\[
w^{\mathsf{T}}z = \sum_{j\in\J} w^j z^j \in\R,\quad w\cdot z = (w^1 z^1 , \ldots, w^J z^J)^\mathsf{T}\in\R^J,
\]
respectively. For a set $\mathcal{Z}\subseteq \R^J$, its associated indicator function (in the sense of convex analysis) is defined by
\[
I_{\mathcal{Z}}(z)=\begin{cases}0 & \text{ if }z\in\mathcal{Z}, \\ +\infty & \text{ else,}\end{cases}
\]
for each $z\in \R^J$. We denote by $\L^J$ the set of all $J$-dimensional random cost vectors $u=(u^1,\ldots,u^J)^{\mathsf{T}}$, which is clearly isomorphic to the space $\R^{J\times I}$ of $J\times I$-dimensional real matrices. We write $\L=\L^1$ for $J=1$. For $u\in \L^J$, we denote by $u_i=(u^1_i,\ldots,u^J_i)^{\mathsf{T}}\in\R^J$ its realization at $i\in \O$, and define the expected value of $u$ as
\[
\E\sqb{u}=\sum_{i\in\O} p_i u_i \in \R^J.
\]
Similarly, given another integer $N\geq 1$, we denote by $\L^{J\times N}$ the set of all $J\times N$-dimensional random matrices $Q$ with realizations $Q_1,\ldots,Q_I$.

The elements of $\L^J$ will be used to denote random cost vectors; hence, lower values are preferable. To that end, we introduce $\L^J_{+}$, the set of all elements in $\L^J$ whose components are nonnegative random variables. Given $u,v\in \L^J$, we write $u\leq v$ if and only if $u_i^j\leq v_i^j$ for every $i\in\O$ and $j\in\J$, that is, $v\in u+\L^J_+$. We call a set-valued function $R\colon \L^J\to 2^{\R^J}$ a \emph{multivariate convex risk measure} if it satisfies the following axioms (\emph{see} \cite{hh:duality}):
\begin{enumerate}
\item[(A1)] \textbf{Monotonicity: }$u\leq v$ implies $R(u)\supseteq R(v)$ for every $u,v\in \L^J$.
\item[(A2)] \textbf{Translativity: }$R(u+z)=R(u)+z$ for every $u\in \L^J$ and $z\in\R^J$.
\item[(A3)] \textbf{Finiteness: }$R(u)\notin\cb{\emptyset,\R^J}$ for every $u\in \L^J$.
\item[(A4)] \textbf{Convexity: }$R(\gamma u +(1-\gamma) v)\supseteq \gamma R(u) + (1-\gamma) R(v)$ for every $u,v\in \L^J$, $\gamma \in (0,1)$.
\item[(A5)] \textbf{Closedness: }The \emph{acceptance set} $\A\coloneqq \cb{u\in \L^J\mid 0\in R(u)}$ of $R$ is a closed set.
\end{enumerate}
A multivariate convex risk measure $R$ is called \emph{coherent} if it also satisfies the following axiom:
\begin{enumerate}
\item[(A6)] \textbf{Positive homogeneity: }$R(\gamma u)=\gamma R(u)$ for every $u\in\L^J$, $\gamma>0$.
\end{enumerate}

\begin{remark}\label{completelattice}
It is easy to check that the values of a multivariate convex risk measure $R$ are in the collection of all \emph{closed convex upper subsets} of $\R^J$, that is,
\[
\G = \cb{E \subseteq \R^J \mid E = \cl \co (E+\R^J_+)},
\]
where $\cl$ and $\co$ denote the closure and convex hull operators, respectively. In other words, for every $u\in\L^J$, the set $R(u)$ is a closed convex set with the property $R(u)=R(u)+\R^J_+$. The collection $\G$, when equipped with the superset relation $\supseteq$, is a complete lattice in the sense that every nonempty subset $\mathcal{E}$ of $\G$ has an infimum (and also a supremum) which is uniquely given by $\inf \mathcal{E}=\cl \co \bigcup_{E\in \mathcal{E}}E$ as an element of $\G$ (\emph{see} Example~2.13 in \cite{setoptsurvey}). The complete lattice property of $\G$ makes it possible to study optimization problems with $\G$-valued objective functions and constraints, as will also be crucial in the approach of the present paper.
\end{remark}

A multivariate convex risk measure $R$ can be represented in terms of vectors $\mu$ of probability measures and weight vectors $w$ in the cone $\R^J_+\sm\cb{0}$, which is called its \emph{dual representation}. To state this representation, we provide the following definitions and notation.

Let $\M_1^J$ be the set of all $J$-dimensional vectors $\mu=(\mu^1,\ldots, \mu^J) $ of probability measures on $\O$, that is, for each $j\in\J$, the probability measure $\mu^j$ assigns the probability $\mu^j_i $ to the elementary event $\cb{i}$ for $i\in\O$. For $\mu\in \M_1^J$ and $i\in\O$, we also write $\mu_i\coloneqq (\mu_i^1,\ldots,\mu_i^J)^{\mathsf{T}}\in \R^J$. Finally, for $\mu\in\M_1^J$ and $u\in \L^J$, we define the expectation of $u$ under $\mu$ by
\[
\E^\mu\sqb{u}=\of{\E^{\mu^1}[u^1],\ldots,\E^{\mu^J}[u^J]}^\mathsf{T}= \sum_{i\in\O} \mu_i \cdot u_i.
\]

A multivariate convex risk measure $R$ has the following dual representation (\emph{see} Theorem~6.1 in \cite{hh:duality}): for every $u\in \L^J$,
\begin{align*}
R(u) &= \bigcap_{\mu\in \M_1^J, w\in\R^J_+\sm\cb{0}} \of{\E^\mu \sqb{u}+\cb{z\in\R^J\mid w^{\mathsf{T}}z\geq -\b(\mu,w)}}\\
&= \bigcap_{ w\in\R^J_+\sm\cb{0}} \cb{z\in\R^J\mid w^{\mathsf{T}}z\geq \sup_{\mu\in \M_1^J}\of{w^{\mathsf{T}}\E^\mu \sqb{u} -\b(\mu,w)}},
\end{align*}
where $\b$ is the \emph{minimal penalty function} of $R$ defined by
\begin{equation}\label{beta}
\b(\mu,w)=\sup_{u\in \A} w^{\mathsf{T}}\E^\mu\sqb{u}=\sup\cb{w^{\mathsf{T}}\E^\mu\sqb{u}\mid 0\in R(u), u\in \L^J},
\end{equation}
for each $\mu\in\M_1^J$, $w\in\R^J_+\sm\cb{0}$. Note that $\b(\cdot,w)$ and $\b(\mu,\cdot)$ are convex functions as they are suprema of linear functions.

The \emph{scalarization} of $R$ by a weight vector $w\in\R^J_+\sm\cb{0}$ is defined as the function
\begin{equation}\label{phi}
u\mapsto \varphi_w(u)\coloneqq \inf_{z\in R(u)}w^{\mathsf{T}}z
\end{equation}
on $\L^J$. As an immediate consequence of the dual representation of $R$, we also obtain a dual representation for its scalarization:
\begin{equation}\label{scalar-dual}
\varphi_w(u)= \sup_{\mu\in \M_1^J}\of{w^{\mathsf{T}}\E^\mu \sqb{u} -\b(\mu,w)}.
\end{equation}

Some examples of multivariate coherent and convex risk measures are the \emph{multivariate conditional value-at-risk (multivariate CVaR)} and the \emph{multivariate entropic risk measure}, respectively.

\begin{example}[Multivariate CVaR]\label{multiCVaR}
Let $C\subseteq\R^J$ be a polyhedral closed convex cone with $\R^J_+\subseteq C \neq \R^J$. The multivariate conditional value-at-risk is defined by
\begin{equation}\label{cvardefn}
R(u)=\of{CVaR_{\nu^1}(u^1),\ldots,CVaR_{\nu^J}(u^J)}^{\mathsf{T}}+C,
\end{equation}
where
\[
CVaR_{\nu^j}(u^j)=\inf_{z^{j}\in\R}\of{z^{j}+\frac{1}{1-\nu^{j}}\mathbb{E}\sqb{(u^{j}-z^j)^{+}}},
\]
for each $u\in\L^J$ and $j\in\J$ (\emph{see} Definition~2.1 and Remark~2.3 in \cite{hry:avar}). Here, $\nu^j\in (0,1)$ is a risk-aversion parameter and $(x)^+\coloneqq\max\cb{x,0}$ for $x\in\R$. The minimal penalty function of $R$ is given by
\[
\b(\mu,w)=\begin{cases}0& \text{ if }w\in C^+ \text{ and }\frac{\mu^j_i}{p_i}\leq \frac{1}{1-\nu^j}, \;\forall  i\in\O,j\in\J,\\ +\infty &\text{ else,}\end{cases}
\]
where $C^+$ is the positive dual cone of $C$ defined by
\[
C^+=\cb{w\in \R^J\mid w^\mathsf{T}z\geq 0,\;\forall z\in C}.
\]
Note that \eqref{cvardefn} is the multivariate extension of the well-known conditional value-at-risk (\emph{see} \cite{uryasev1}, \cite{uryasev2}). 
\end{example}

\begin{example}[Multivariate entropic risk measure]\label{multient} Consider the vector-valued exponential utility function $U\colon\R^J\to\R^J$ defined by
\[
U(x)=(U^1(x^1),\ldots,U^J(x^J))^\mathsf{T},
\]
where
\[
U^{j}(x^j)=\frac{1-e^{\delta^{j}x^j}}{\delta^{j}},
\]
for each $x\in\R^J$ and $j\in\J$. Here, $\delta^j>0$ is a risk-aversion parameter. Note that $U^j(\cdot)$ is a concave decreasing function. Let $C\subseteq\R^J$ be a polyhedral closed convex cone with $\R^J_+\subseteq C \neq \R^J$. The multivariate entropic risk measure $R\colon \L^J\to 2^{\R^J}$ is defined as
\begin{equation}\label{entropicdefn}
R(u) = \cb{z\in\R^{J}\mid\E\sqb{U(u-z)}\in C},
\end{equation}
for each $u\in\L^J$ (\emph{see} Section~4.1 in \cite{sdrm}). Since $\R^J_+\subseteq C$, larger values of the expected utility are prefered. Moreover, as each $U^j(\cdot)$ is a decreasing function, $z\in R(u)$ implies $z^\prime\in R(u)$ for every $z^\prime \geq z$.

Finally, the minimal penalty function of $R$ is given by (\emph{see} Proposition~4.4 in \cite{sdrm})
\[
\beta(\mu,w) = \sum_{j\in\J}\frac{w^{j}}{\delta^{j}}\of{H(\mu^j||p)-1+\log w^j}+\inf_{s\in C^+}\sum_{j\in\J}\frac{1}{\delta^{j}}\of{s^{j}-w^j\log s^{j}},
\]
where $H(\mu^j||p)$ is the \emph{relative entropy} of $\mu^j$ with respect to $p$ defined by
\[
H(\mu^j||p)=\sum_{i\in\O}\mu^j_i\log\of{\frac{\mu^j_i}{p_i}}.
\]
Note that \eqref{entropicdefn} is the multivariate extension of the well-known entropic risk measure (\emph{see} \cite{fs}). 
\end{example}

\section{Problem formulation}\label{problem}

We consider a multi-objective risk-averse two-stage stochastic programming problem. The decision variables and the parameters of the problem consist of deterministic and random vectors and matrices of different dimensions. To that end, let us fix some integers $J,K,L,M,N\geq 1$ and deterministic parameters $A\in \R^{K\times M}$ and $b\in\R^K$. At the first stage, the decision-maker chooses a deterministic vector $x\in\R^M_+$ with associated cost $Cx$, where $C\in \R^{J\times M}$. At the second stage, the decision-maker chooses a random vector $y\in \L^N_+$ based on the first-stage decision $x\in\R^M$ as well as the random parameters $W\in \L^{L\times N}, T\in \L^{L\times M}, h\in \L^{L}$. The random cost associated with $y$ is $Qy\in \L^J$, where $Q\in \L^{J\times N}$.

Given feasible choices of the decision variables $x\in\R^M$ and $y\in \L^N$, the risk associated with the second-stage cost vector $Qy\in \L^J$ is quantified via a multivariate convex risk measure $R\colon\L^J\to 2^{\R^J}$. The set $R(Qy)$ consists of the deterministic cost vectors in $\R^J$ that can make $Qy$ acceptable in the following sense:
\[
R(Qy) = \cb{z\in\R^J \mid Qy - z \in \A},
\]
where $\A=\cb{u\in\L^J \mid 0 \in R(u)}$ is the \emph{acceptance set} of the risk measure. Hence, $R(Qy)$ collects the deterministic cost reductions from $Qy$ that would yield an acceptable level of risk for the resulting random cost. Together with the deterministic cost vector $Cx$, the overall risk associated with $x$ and $y$ is given by the set
\[
Cx+R(Qy)= \cb{Cx+z\mid Qy-z\in\A}=\cb{Cx+z\mid z\in R(Qy)}=R(Cx+Qy),
\]
where the last equality holds thanks to the translativity property (A2).

Our aim is to calculate the ``minimal" vectors $z\in R(Cx+Qy)$ over all feasible choices of $x$ and $y$. Using \emph{vector optimization}, we formulate our problem as follows:
\begin{align*}
&\text{min}\; \;                    z\;\;\text{w.r.t.}\;\;{\R^J_+}\tag{$P_V$}\\
&\text{s.t. }\;\;                      z\in R(Cx + Qy)\\
& \quad\quad\;      Ax = b\\
& \quad\quad\;     T_i x + W_i y_i = h_i ,\quad  \forall i\in\O\\
& \quad\quad\;     z\in\R^J,\; x\in \R^M_+,\; y_i\in\R^N_+,\quad  \forall i\in\O.
\end{align*}
Let
\[
\X\coloneqq \cb{(x,y)\in\R^M_+\times \L^N_+\mid Ax=b,\; T_i x+W_iy_i=h_i, \; \forall i\in\O}.
\]
We assume that $\X$ is a compact set. Let us denote by $\mathscr{R}$ the image of the feasible region of $(P_V)$ under the objective function, that is,
\[
\mathscr{R}=\cb{z\in\R^J\mid z\in R(Cx+Qy), (x,y)\in \X}=\bigcup_{(x,y)\in\X}R(Cx+Qy).
\]
The \emph{upper image} of $(P_V)$ is defined as the set
\begin{equation}\label{upperimage}
\P = \cl \mathscr{R}= \cl \bigcup_{(x,y)\in \X} R(Cx+Qy).
\end{equation}
In particular, we have $\P\in\G$, that is, $\P$ is a closed convex upper set; see Remark~\ref{completelattice}.

Finding the ``minimal" $z$ vectors of $(P_V)$ is understood as computing the boundary of the set $\P$. For completeness, we recall the minimality notions for $(P_V)$.

\begin{definition}\label{weakminimizer}
A point $(x,y,z)\in\X\times\R^J$ is called a \emph{weak minimizer (weakly efficient solution)} of $(P_V)$ if $z\in R(Cx+Qy)$ and $z$ is a weakly minimal element of $\mathscr{R}$, that is, there exists no $z^\prime\in \mathscr{R}$ such that $z\in z^\prime+\R^J_{++}$.
\end{definition}

\begin{definition}\label{weaksolution}
(Definition~3.2 in \cite{convexbenson}) A set $\mathcal{Z}\subseteq \X\times\R^J$ is called a \emph{weak solution} of $(P_V)$ if the following conditions are satisfied:
\begin{enumerate}
\item \textbf{Infimality:} it holds $\cl\co\of{\cb{z\in\R^J\mid (x,y,z)\in \mathcal{Z}}+\R^J_+}=\P$,
\item \textbf{Minimality:} each $(x,y,z)\in\mathcal{Z}$ is a weak minimizer of $(P_V)$.
\end{enumerate}
\end{definition}

Ideally, one would be interested in computing a weak solution $\mathcal{Z}$ of $(P_V)$. However, except for some special cases (e.g. when the values of $R$ and the upper image $\P$ are polyhedral sets), such $\mathcal{Z}$ consists of infinitely many feasible points, that is, it is impossible to recover $\P$ using only finitely many values of $R$. Therefore, our aim is to propose algorithms to compute $\P$ approximately through finitely many feasible points.

\begin{definition}\label{approximatesolution}
(Definition~3.3 in \cite{convexbenson}) Let $\epsilon>0$. A nonempty finite set $\bar{\mathcal{Z}}\subseteq \X\times\R^J$ is called a \emph{finite weak $\epsilon$-solution} of $(P_V)$ if the following conditions are satisfied:
\begin{enumerate}
\item $\boldsymbol{\epsilon}$\textbf{-Infimality:} it holds $\co\of{\cb{z\in\R^J\mid (x,y,z)\in \bar{\mathcal{Z}}}}+\R^J_+-\epsilon\1\supseteq\P$,
\item \textbf{Minimality:} each $(x,y,z)\in\bar{\mathcal{Z}}$ is a weak minimizer of $(P_V)$.
\end{enumerate}
\end{definition}

As noted in \cite{convexbenson}, a finite weak $\epsilon$-solution $\bar{\mathcal{Z}}$ provides an outer and an inner approximation of $\P$ in the sense that
\begin{equation}\label{outerinner}
\co\of{\cb{z\in\R^J\mid (x,y,z)\in \bar{\mathcal{Z}}}}+\R^J_+-\epsilon\1\supseteq\P\supseteq \co\of{\cb{z\in\R^J\mid (x,y,z)\in \bar{\mathcal{Z}}}}+\R^J_+.
\end{equation}

Let us also introduce the \emph{weighted sum scalarization} problem with weight vector $w\in\R^J_+\sm\cb{0}$:
\begin{align}\label{weightedsum}
\text{min}\; w^{\mathsf{T}}z\quad \text{s.t.}\;\;z\in R(Cx+Qy),\; (x,y)\in\X.\tag{$P_1(w)$}
\end{align}
Define $\P_1(w)$ as the optimal value of $(P_1(w))$. For the remainder of this section, we provide a discussion on the existence of optimal solutions of $(P_1(w))$ as well as the relationship between $(P_1(w))$ and $(P_V)$.

\begin{proposition}\label{existenceP1}
Let $w\in\R^J_+\sm\cb{0}$. Then, there exists an optimal solution $(x,y,z)$ of $(P_1(w))$.
\end{proposition}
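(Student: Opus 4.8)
The plan is to reduce $(P_1(w))$ to the minimization of a lower semicontinuous function over the compact set $\X$, and then invoke the Weierstrass theorem. The key observation is that, by the definition of the scalarization $\varphi_w$ in \eqref{phi}, for any fixed $(x,y)\in\X$ we have
\[
\inf\cb{w^{\mathsf{T}}z\mid z\in R(Cx+Qy)}=\varphi_w(Cx+Qy),
\]
so that the optimal value $\P_1(w)$ equals $\inf_{(x,y)\in\X}\varphi_w(Cx+Qy)$. Thus the first step is to argue that the map $(x,y)\mapsto\varphi_w(Cx+Qy)$ is lower semicontinuous (indeed, as I explain below, convex and finite-valued) on $\X$, and the second step is to argue that the infimum over the compact set $\X$ is attained and that the attained value is in fact a minimum of $w^{\mathsf{T}}z$ over the original constraint set, i.e.\ that the infimum defining $\varphi_w(Cx+Qy)$ is itself attained at the optimal $(x,y)$.

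For the first step I would use the dual representation \eqref{scalar-dual}: $\varphi_w(u)=\sup_{\mu\in\M_1^J}\of{w^{\mathsf{T}}\E^\mu[u]-\b(\mu,w)}$. Since $u=Cx+Qy$ depends affinely on $(x,y)$ and $\E^\mu$ is linear, each function $(x,y)\mapsto w^{\mathsf{T}}\E^\mu[Cx+Qy]-\b(\mu,w)$ is affine (with $\b(\mu,w)$ just a constant once $\mu,w$ are fixed), hence $\varphi_w\circ(\text{affine map})$ is a supremum of affine functions and therefore convex and lower semicontinuous. Finiteness on $\X$: by the finiteness axiom (A3), $R(Cx+Qy)\notin\cb{\emptyset,\R^J}$, and since $R(Cx+Qy)\in\G$ is a nonempty closed convex upper set that is not all of $\R^J$, it has a supporting hyperplane; combined with $w\in\R^J_+\sm\cb{0}$ and $R(Cx+Qy)=R(Cx+Qy)+\R^J_+$ one checks $\varphi_w(Cx+Qy)>-\infty$, while $\varphi_w(Cx+Qy)<+\infty$ is immediate because $R(Cx+Qy)\neq\emptyset$. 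So $(x,y)\mapsto\varphi_w(Cx+Qy)$ is a finite convex function on the compact set $\X$.

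For the second step, the Weierstrass theorem applied to a lower semicontinuous function on the nonempty compact set $\X$ yields a minimizer $(x^\ast,y^\ast)\in\X$ of $(x,y)\mapsto\varphi_w(Cx+Qy)$. It then remains to produce a $z^\ast$ achieving $w^{\mathsf{T}}z^\ast=\varphi_w(Cx^\ast+Qy^\ast)$ with $z^\ast\in R(Cx^\ast+Qy^\ast)$; then $(x^\ast,y^\ast,z^\ast)$ is feasible for $(P_1(w))$ with objective value $\P_1(w)$, hence optimal. The main obstacle is exactly this last point: the infimum in \eqref{phi} defining $\varphi_w(u)$ need not be attained in general. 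Here I would use that $R(u)$ is a nonempty closed convex set and that the linear functional $z\mapsto w^{\mathsf{T}}z$ is bounded below on it (its infimum being the finite number $\varphi_w(u)$); since $R(u)=R(u)+\R^J_+$ and $w\geq 0$, the recession cone of $R(u)$ is contained in the halfspace $\cb{z\mid w^{\mathsf{T}}z\geq 0}$, so $w^{\mathsf{T}}z$ has no direction of recession along which it decreases, and for a closed convex set a linear function bounded below with no decreasing recession direction attains its infimum. This gives the desired $z^\ast$ and completes the proof. (Alternatively, one can bypass $\varphi_w$ and work directly with the set $\mathscr{R}$: show it is convex with $\P=\cl\mathscr{R}\in\G$, show $w^{\mathsf{T}}z$ is bounded below on $\mathscr{R}$ by compactness of $\X$ plus the support-function argument, and attain the min on $\cl\mathscr{R}$; but the route through $\varphi_w$ and the dual representation is cleanest.)
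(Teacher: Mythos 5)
Your first two steps are exactly the paper's proof: write $\P_1(w)=\inf_{(x,y)\in\X}\varphi_w(Cx+Qy)$, obtain lower semicontinuity of $\varphi_w$ from the dual representation \eqref{scalar-dual} as a supremum of affine functions, and apply the Weierstrass theorem on the compact set $\X$. You correctly noticed that this only yields a minimizer $(x^\ast,y^\ast)$ of the scalarized objective, and that one must still exhibit $z^\ast\in R(Cx^\ast+Qy^\ast)$ attaining $\inf_{z\in R(Cx^\ast+Qy^\ast)}w^{\mathsf{T}}z$ in order to get a feasible optimal triple $(x^\ast,y^\ast,z^\ast)$; the paper's one-line proof passes over this point in silence, so your instinct to add a third step is sound.

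The difficulty is that the general fact you invoke to close that step is false: a linear function that is bounded below on a closed convex set and nonnegative on its recession cone need not attain its infimum. Take $J=2$, $E=\cb{z\in\R^2\mid z^1>0,\; z^1z^2\geq 1}$ and $w=(1,0)^{\mathsf{T}}$. Then $E$ is a closed convex upper set ($E+\R^2_+=E$) with recession cone $\R^2_+$, and $w^{\mathsf{T}}z=z^1$ is bounded below by $0$ on $E$ but never equal to $0$ there. Attainment does hold for polyhedral sets, or when every recession direction $d$ with $w^{\mathsf{T}}d=0$ is a two-sided direction of constancy, but neither is available here. Your finiteness argument has a similar soft spot: the existence of \emph{some} supporting hyperplane of $R(Cx+Qy)$ does not make $\varphi_w(Cx+Qy)>-\infty$ for an arbitrary $w\in\R^J_+\sm\cb{0}$ (e.g., $\inf\cb{z^1\mid z^1+z^2\geq 0}=-\infty$ although the half-space is a proper closed convex upper set; this is precisely what happens for the multivariate CVaR when $w\notin C^+$). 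So your third step has a genuine gap, and repairing it requires extra structure on $R$ (polyhedral values, or attainment of the supremum in \eqref{scalar-dual} at some $\mu^\ast$ whose supporting half-space touches $R(Cx^\ast+Qy^\ast)$) rather than the recession-cone argument as stated. In fairness, the paper's own proof does not supply this step either.
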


\begin{proof}
Note that $\P_1(w)=\inf_{(x,y)\in\X}\varphi_w(Cx+Qy)$, where $\varphi_w(\cdot)$ is the scalarization of $R$ by $w$ as defined in \eqref{phi}. Since $\varphi_w(\cdot)$ admits the dual representation in \eqref{scalar-dual}, it is a lower semicontinuous function on $\L^J$. Moreover, $\X$ is a compact set by assumption. By Theorem~2.43 in \cite{aliprantis}, it follows that an optimal solution of $(P_1(w))$ exists.
\end{proof}

\begin{remark}
Note that the feasible region $\cb{(x,y,z)\in \X\times\R^J\mid z\in R(Cx+Qy)}$ of $(P_V)$ is not compact in general due to the multivariate risk measure $R$, which has unbounded values. However, in \cite{convexbenson}, the feasible region of a vector optimization problem is assumed to be compact. Therefore, by assuming only $\X$ to be compact, Proposition~\ref{existenceP1} generalizes the analogous result in \cite{convexbenson}.
\end{remark}

The following proposition is stated in \cite{convexbenson} without a proof. It can be shown as a direct application of Theorem~5.28 in \cite{jahnbook}.

\begin{proposition}\label{weakeff}
(Proposition~3.4 in \cite{convexbenson}) Let $w\in\R^J_+\sm\cb{0}$. Every optimal solution $(x,y,z)$ of $(P_1(w))$ is a weak minimizer of $(P_V)$.
\end{proposition}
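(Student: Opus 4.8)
The plan is to unwind the definitions and argue by contradiction, using only the elementary fact that a nonzero nonnegative weight vector $w$ induces a linear functional $z\mapsto w^{\mathsf{T}}z$ that is strictly decreasing along every direction in $\R^J_{++}$; neither convexity nor closedness of $\mathscr{R}$ is needed, so I would avoid invoking the general machinery and keep the argument self-contained.

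First I would record what optimality means. If $(x,y,z)$ is an optimal solution of $(P_1(w))$, then $(x,y)\in\X$ and $z\in R(Cx+Qy)$, so in particular $z\in\mathscr{R}$; moreover, by the identity $\P_1(w)=\inf_{(x,y)\in\X}\varphi_w(Cx+Qy)=\inf_{z'\in\mathscr{R}}w^{\mathsf{T}}z'$ (cf.\ the definitions in \eqref{phi} and of $\mathscr{R}$), we have $w^{\mathsf{T}}z=\P_1(w)\leq w^{\mathsf{T}}z'$ for every $z'\in\mathscr{R}$. The membership $z\in R(Cx+Qy)$ already supplies the first requirement in Definition~\ref{weakminimizer}, so it remains to show that $z$ is a weakly minimal element of $\mathscr{R}$.

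Next I would suppose, toward a contradiction, that $z$ is not weakly minimal, i.e.\ there exists $z'\in\mathscr{R}$ with $z\in z'+\R^J_{++}$, equivalently $z^j-z'^j>0$ for every $j\in\J$. Then $w^{\mathsf{T}}(z-z')=\sum_{j\in\J}w^j(z^j-z'^j)$; since $w\in\R^J_+$ every summand is nonnegative, and since $w\in\R^J_+\sm\cb{0}$ there is an index $j_0$ with $w^{j_0}>0$, so the $j_0$-th summand is strictly positive. Hence $w^{\mathsf{T}}z'<w^{\mathsf{T}}z=\P_1(w)$, contradicting that $w^{\mathsf{T}}z'$ is an admissible objective value of $(P_1(w))$ (as $z'\in\mathscr{R}$ yields a feasible triple for $(P_1(w))$). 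Therefore no such $z'$ exists, $z$ is weakly minimal in $\mathscr{R}$, and $(x,y,z)$ is a weak minimizer of $(P_V)$.

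I do not anticipate any genuine obstacle; the only point that needs a line of care is that $w$ may have vanishing components, so the strict decrease of $w^{\mathsf{T}}(\cdot)$ along $\R^J_{++}$ must be obtained from the existence of a single strictly positive component of $w$ rather than from componentwise strict positivity of all the products $w^j(z^j-z'^j)$. (As the authors note, the same conclusion follows from Theorem~5.28 in \cite{jahnbook}; I would mention this as an alternative but prefer the direct argument above.)
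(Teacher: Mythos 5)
Your argument is correct and complete. The one point that needed care---that $w$ may have zero components, so strict positivity of $w^{\mathsf{T}}(z-z')$ must come from a single index $j_0$ with $w^{j_0}>0$ combined with $z^{j_0}-z'^{j_0}>0$---is handled properly, and the identification of the feasible objective values of $(P_1(w))$ with $\cb{w^{\mathsf{T}}z'\mid z'\in\mathscr{R}}$ is exactly what makes the contradiction go through. The paper itself supplies no proof: it merely remarks that the proposition follows as "a direct application of Theorem~5.28 in \cite{jahnbook}," i.e.\ the general scalarization theorem for vector optimization with respect to an arbitrary ordering cone, where the weight is taken in the dual cone minus the origin. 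Your route is the elementary specialization of that theorem to the cone $\R^J_+$: it buys a self-contained two-line argument that requires no convexity, closedness, or cone machinery, at the cost of not generalizing verbatim to other ordering cones (where one would replace $\R^J_{++}$ by the interior of the cone and $\R^J_+\sm\cb{0}$ by the dual cone minus zero---a replacement under which your argument still works with the same one-line modification). Either presentation is acceptable; yours has the advantage of making the proposition verifiable without consulting the reference.
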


Proposition~\ref{weakeff} implies that, in the weak sense, solving $(P_V)$ is understood as solving the family $(P_1(w))_{w\in\R^J_+\sm\cb{0}}$ of weighted sum scalarizations.

\section{Convex Benson algorithms for $(P_V)$}\label{algorithms}

The convex Benson algorithms have a primal and a dual variant. While the primal approximation algorithm computes a sequence of outer approximations for the upper image $\P$ in the sense of \eqref{outerinner}, the dual approximation algorithm works on an associated vector maximization problem, called the \emph{geometric dual problem}. To explain the details of these algorithms, we should define the concept of geometric duality as well as a new scalarization problem $(P_2(v))$, called the problem of \emph{scalarization by a reference variable} $v\in\R^J$.

\subsection{The problem of scalarization by a reference variable}\label{p2section}

The problem $(P_2(v))$ is required to find the minimum step-length to enter the upper image $\P$ from a point $v\in\R^J \sm \P$ along the direction $\1=(1,\ldots,1)^{\mathsf{T}}\in\R^J$. It is formulated as
\begin{equation}\label{p2vdef}
\text{min}\; \a\quad \text{s.t.}\; v+\a\1 \in R(Cx+Qy),\; (x,y)\in\X,\; \a\in\R.\tag{$P_2(v)$}
\end{equation}
Note that $(P_2(v))$ is a scalar convex optimization problem with a set-valued constraint. We denote by $\P_2(v)$ the optimal value of $(P_2(v))$. We relax the set-valued constraint
$v+\a\1 \in R(Cx+Qy)$ in a Lagrangian fashion and obtain the following dual problem using the results of Section~3.2 in \cite{borwein}:
\begin{equation}\label{d2vdef}
\underset{\gamma\in\R^{J}_+}{\maxi}\; \inf_{(x,y)\in\X, \a\in\R}\of{\a+\inf_{z\in R(Cx+Qy)-v-\a\1} \gamma^\mathsf{T}z }.\tag{$LD_2(v)$}
\end{equation}
Note that $(LD_2(v))$ is constructed by rewriting the risk constraint of $(P_2(v))$ as $0\in R(Cx+Qy)-v-\a\1$ and calculating the support function of the set $R(Cx+Qy)-v-\a\1$ by the dual variable $\gamma\in\R^J_+$. The next proposition states the strong duality relationship between $(P_2(v))$ and $(LD_2(v))$.

\begin{proposition}\label{d2v}
(Theorem~19 and Equation~(3.23) in \cite{borwein}) Let $v\in\R^J$. Then, there exist optimal solutions $(x_{(v)},y_{(v)},\a_{(v)})$ of $(P_2(v))$ and $\gamma_{(v)}$ of $(LD_2(v))$, and the optimal values of the two problems coincide.
\end{proposition}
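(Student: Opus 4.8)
The plan is to reduce $(P_2(v))$ and $(LD_2(v))$ to explicit forms, recognise strong duality as a minimax equality over the unit simplex, and then read off attainment from semicontinuity; this makes concrete the general Lagrangian duality for set-valued constraints of Section~3.2 in \cite{borwein} invoked in the statement.

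First I would simplify $(LD_2(v))$. For $\gamma\in\R^J_+$, the innermost infimum in $(LD_2(v))$, by the definition \eqref{phi} of the scalarization applied to the translate $R(Cx+Qy)-v-\a\1$, equals
\[
\inf_{z\in R(Cx+Qy)-v-\a\1}\gamma^{\mathsf{T}}z=\varphi_\gamma(Cx+Qy)-\gamma^{\mathsf{T}}v-\a\,\gamma^{\mathsf{T}}\1 ,
\]
so the objective of $(LD_2(v))$ is $\a(1-\gamma^{\mathsf{T}}\1)+\varphi_\gamma(Cx+Qy)-\gamma^{\mathsf{T}}v$, and the infimum over $\a\in\R$ forces $\gamma^{\mathsf{T}}\1=1$. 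Writing $\Delta\coloneqq\cb{w\in\R^J_+\mid w^{\mathsf{T}}\1=1}$ for the unit simplex, we obtain
\[
(LD_2(v))=\sup_{\gamma\in\Delta}\Bigl(\inf_{(x,y)\in\X}\varphi_\gamma(Cx+Qy)-\gamma^{\mathsf{T}}v\Bigr).
\]
On the primal side, the dual representation of $R$ together with \eqref{scalar-dual} shows that $z\in R(u)$ if and only if $w^{\mathsf{T}}z\geq\varphi_w(u)$ for every $w\in\R^J_+\sm\cb{0}$; taking $z=v+\a\1$ and using the positive homogeneity of $w\mapsto\varphi_w(u)$ to normalise $w$ onto $\Delta$, this is equivalent to $\a\geq\varphi_w(u)-w^{\mathsf{T}}v$ for all $w\in\Delta$. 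Hence $v+\a\1\in R(u)$ if and only if $\a\geq\psi_v(u)\coloneqq\sup_{w\in\Delta}\of{\varphi_w(u)-w^{\mathsf{T}}v}$, the bound being attained because $R(u)$ is closed, and therefore
\[
\P_2(v)=\inf_{(x,y)\in\X}\psi_v(Cx+Qy)=\inf_{(x,y)\in\X}\sup_{w\in\Delta}\of{\varphi_w(Cx+Qy)-w^{\mathsf{T}}v}.
\]

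With these reductions, the asserted equality of optimal values is exactly the minimax identity
\[
\inf_{(x,y)\in\X}\sup_{w\in\Delta}g(x,y,w)=\sup_{w\in\Delta}\inf_{(x,y)\in\X}g(x,y,w),\qquad g(x,y,w)\coloneqq\varphi_w(Cx+Qy)-w^{\mathsf{T}}v ,
\]
which I would obtain from Sion's minimax theorem: $\Delta$ is convex and compact; $\X$ is convex (being cut out by linear equalities and sign constraints) and compact by assumption; for fixed $w$ the map $(x,y)\mapsto g(x,y,w)$ is convex (because $\varphi_w$ is convex by axiom (A4) and $(x,y)\mapsto Cx+Qy$ is affine) and lower semicontinuous (because \eqref{scalar-dual} exhibits $\varphi_w$ as a supremum of continuous affine functions); and for fixed $(x,y)$ the map $w\mapsto g(x,y,w)$ is concave and upper semicontinuous, being by \eqref{phi} an infimum of affine functions of $w$. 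Equivalently, one verifies the Slater-type constraint qualification behind Theorem~19 in \cite{borwein}: for any $(\bar x,\bar y)\in\X$ and any $z_0\in R(C\bar x+Q\bar y)$, Remark~\ref{completelattice} gives $z_0+\R^J_{++}\subseteq R(C\bar x+Q\bar y)$, so every $\bar\a>\max_{j\in\J}(z_0^j-v^j)$ makes $v+\bar\a\1\in\interior R(C\bar x+Q\bar y)$ a strictly feasible point of $(P_2(v))$.

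It remains to confirm attainment on both sides. The function $\psi_v$ is real-valued --- bounded above because $R(u)$ is a nonempty upper set (so $v+\a\1\in R(u)$ for large $\a$), and bounded below because $R(u)\ne\R^J$ by (A3) (otherwise $R(u)$ would contain $\bigcup_{\a\in\R}(v+\a\1+\R^J_+)=\R^J$) --- and lower semicontinuous as a supremum of the lower semicontinuous maps $u\mapsto\varphi_w(u)-w^{\mathsf{T}}v$; since $\X$ is compact and $(x,y)\mapsto Cx+Qy$ is continuous, Theorem~2.43 in \cite{aliprantis} produces a minimiser $(x_{(v)},y_{(v)})$ of $(x,y)\mapsto\psi_v(Cx+Qy)$, and $\a_{(v)}\coloneqq\psi_v(Cx_{(v)}+Qy_{(v)})$ completes an optimal solution of $(P_2(v))$ --- this is the Weierstrass argument already used for Proposition~\ref{existenceP1}. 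On the dual side, $\gamma\mapsto\inf_{(x,y)\in\X}g(x,y,\gamma)$ is upper semicontinuous on the compact set $\Delta$, being an infimum of the upper semicontinuous maps $\gamma\mapsto g(x,y,\gamma)$, so it attains its maximum at some $\gamma_{(v)}\in\Delta$. The one genuinely non-routine step is the minimax equality; but note that, unlike the general Lagrangian duality situation, after the reduction no constraint qualification is needed at all, since the dual variable now ranges over the \emph{compact} convex set $\Delta$ and Sion's theorem applies verbatim.
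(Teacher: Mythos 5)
Your argument is correct, but it is genuinely different from what the paper does: the paper offers no proof of Proposition~\ref{d2v} at all and simply cites the general Lagrangian duality for set-valued constraints (Theorem~19 and Equation~(3.23) in \cite{borwein}), with the required openness/Slater-type constraint qualification only verified later, inside the proof of Lemma~\ref{scalarizationlemma}. You instead bypass Borwein's machinery entirely by using the dual representation of $R$ to rewrite the set-valued constraint $v+\a\1\in R(Cx+Qy)$ as the family of scalar inequalities $\a\geq\varphi_w(Cx+Qy)-w^{\mathsf{T}}v$, $w\in\Delta$, which collapses both $(P_2(v))$ and $(LD_2(v))$ to the two sides of a minimax over the compact simplex; Sion then gives the equality of values and Weierstrass-type arguments give attainment on both sides. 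What this buys is a self-contained and arguably more transparent proof in which no constraint qualification is needed (compactness of $\Delta$ does the work), and your observation that the infimum over $\a$ forces $\gamma^{\mathsf{T}}\1=1$ is exactly the mechanism the paper exploits later in Lemma~\ref{scalarizationlemma}; what the paper's citation buys is brevity and a statement valid in Borwein's more general framework. The only point to be aware of is that $g(x,y,w)=\varphi_w(Cx+Qy)-w^{\mathsf{T}}v$ can equal $-\infty$ for some $w\in\Delta$ (e.g.\ for the multivariate CVaR when $w\notin C^+$), so strictly speaking you are applying Sion to an extended-real-valued concave-convex function; this is handled by restricting $w$ to the convex set $\cb{w\in\Delta\mid \inf_{(x,y)\in\X}g(x,y,w)>-\infty}$ (which does not change either value) or by invoking an extended version of the theorem, and it is exactly the same level of care the paper itself exercises in the proofs of Theorem~\ref{p1thm} and Theorem~\ref{p2thm}, so I would not count it as a gap.
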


Finally, we recall the relationship between $(P_2(v))$ and $(P_V)$. The next proposition is provided without a proof since the proof in \cite{convexbenson} can be directly applied to our case.

\begin{proposition}\label{weakmin-primal}
(Proposition~4.5 in \cite{convexbenson}) Let $v\in\R^J$. If $(x_{(v)},y_{(v)},\a_{(v)})$ is an optimal solution of $(P_2(v))$, then $(x_{(v)},y_{(v)},v+\a_{(v)}\1)$ is a weak minimizer of $(P_V)$.
\end{proposition}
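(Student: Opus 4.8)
The plan is to reduce the statement to the analogous result for the weighted sum scalarization by identifying $(P_2(v))$ with a scalarization of the shifted problem, and then invoke Proposition~\ref{weakeff} (or, equivalently, Theorem~5.28 in \cite{jahnbook}). Concretely, let $(x_{(v)},y_{(v)},\a_{(v)})$ be optimal for $(P_2(v))$ and write $z^\ast \coloneqq v+\a_{(v)}\1$. First I would record the obvious facts: $z^\ast\in R(Cx_{(v)}+Qy_{(v)})\subseteq\mathscr{R}$, and $(x_{(v)},y_{(v)})\in\X$, so $(x_{(v)},y_{(v)},z^\ast)$ is feasible for $(P_V)$. It therefore remains only to verify that $z^\ast$ is a weakly minimal element of $\mathscr{R}$, i.e.\ that there is no $z'\in\mathscr{R}$ with $z^\ast\in z'+\R^J_{++}$, equivalently $z'<z^\ast$ componentwise.

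The key step is the following observation tying the direction $\1$ to strict dominance. Suppose for contradiction that some $z'\in\mathscr{R}$ satisfies $z'<z^\ast = v+\a_{(v)}\1$. Then there is a scalar $\delta>0$ with $z' \leq v+\a_{(v)}\1 - \delta\1 = v+(\a_{(v)}-\delta)\1$; pick, say, $\delta = \min_{j\in\J}(z^{\ast j}-z'^j)>0$. Since $z'\in\mathscr{R}$, there exists $(x',y')\in\X$ with $z'\in R(Cx'+Qy')$, and because $R(Cx'+Qy')\in\G$ is an upper set (Remark~\ref{completelattice}), the larger vector $v+(\a_{(v)}-\delta)\1$ also lies in $R(Cx'+Qy')$. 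Hence $(x',y',\a_{(v)}-\delta)$ is feasible for $(P_2(v))$ with objective value $\a_{(v)}-\delta<\a_{(v)}=\P_2(v)$, contradicting optimality of $(x_{(v)},y_{(v)},\a_{(v)})$. This contradiction shows no such $z'$ exists, so $z^\ast$ is weakly minimal in $\mathscr{R}$, and $(x_{(v)},y_{(v)},z^\ast)$ is a weak minimizer of $(P_V)$ by Definition~\ref{weakminimizer}.

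I expect the only subtle point to be the passage from ``$z'<z^\ast$'' to a feasible point of $(P_2(v))$ with strictly smaller $\a$: one must use that the defining direction of $(P_2(v))$ is exactly the all-ones vector $\1$, which lies in $\R^J_{++}\subseteq\R^J_+$, together with the upper-set (monotonicity) structure of the values of $R$ — this is what lets us ``round'' an arbitrary strictly dominating point down to a multiple of $\1$ added to $v$. Everything else is bookkeeping with Definitions~\ref{weakminimizer} and the feasibility of $(P_V)$; strong duality (Proposition~\ref{d2v}) is not needed here, only the existence of the primal optimizer, which is part of that proposition's statement. Alternatively, and more in the spirit of the remark in the excerpt, one can simply note that $(P_2(v))$ is a scalarization of $(P_V)$ by the positive weight $\1$ in the sense of Theorem~5.28 in \cite{jahnbook} applied to the feasible set $\mathscr{R}$ translated so that $v$ becomes the origin, and the cited theorem delivers weak minimality directly; I would present the self-contained contradiction argument above as the cleaner route.
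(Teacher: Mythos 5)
Your proof is correct. The paper itself gives no argument for this proposition, deferring entirely to Proposition~4.5 of \cite{convexbenson}; your contradiction argument is precisely the standard one from that reference, transplanted to the present setting, and it correctly isolates the one point that needs checking here — namely that the upper-set property $R(Cx'+Qy')=R(Cx'+Qy')+\R^J_+$ from Remark~\ref{completelattice} lets you pass from a strictly dominating $z'$ to the feasible point $(x',y',\a_{(v)}-\delta)$ of $(P_2(v))$ with strictly smaller objective value. Nothing further is needed; in particular you are right that neither strong duality nor Proposition~\ref{d2v} plays any role.
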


\subsection{Geometric duality}

Let $\W$ be the unit simplex in $\R^J$, that is,
\[
\W = \cb{w\in\R^J_+\mid w^{\mathsf{T}}\1=1}.
\]
For each $j\in \J$, let $e_{(j)}$ be the $j^{\text{th}}$ unit vector in $\R^J$, that is, the $j^{\text{th}}$ entry of $e^{(j)}$ is one and all other entries are zero.

The geometric dual of problem $(P_V)$ is defined as the vector maximization problem
\begin{align*}\tag{$D_V$}
&\text{max }\; \;  (w^1,\ldots,w^{J-1},\P_1(w))^\mathsf{T}\;\; \text{w.r.t.}\;\; K \\
  &\text{s.t. }\quad\; w \in \W,
\end{align*}
where $K$ is the so-called \emph{ordering cone} defined as $K = \cb{\lambda e_{(J)}\mid \lambda\geq 0}$. Similar to the upper image $\P$ of $(P_V)$, we can define the lower image $\D$ of $(D_V)$ as
\begin{equation*}
  \D := \cb{(w^1,\ldots,w^{J-1},p) \in \R^J\mid  w=(w^1,\ldots,w^{J-1},w^J) \in \W, p \leq \P_1(w)}.
\end{equation*}

\begin{remark}\label{completelatice3}
In analogy with Remark~\ref{completelattice}, the lower image $\D$ is a closed convex $K$-lower set, that is, $\cl\co(\D-K)=\D$.
\end{remark}

Next, we state the relationship between $\D$ and the optimal solutions of $(P_1(w))$, $(P_2(v))$, $(LD_2(v))$.

\begin{proposition}\label{geoduality}
(Proposition~3.5 in \cite{convexbenson}) Let $w\in\W$. If $(P_1(w))$ has a finite optimal value $\P_1(w)$, then $(w^1,\ldots,w^{J-1},\P_1(w))^{\mathsf{T}}$ is a boundary point of $\D$ and it is also a $K$-maximal element of $\D$, that is, there is no $d\in \D$ such that $d^J>\P_1(w)$.
\end{proposition}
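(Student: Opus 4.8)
The plan is to argue directly from the definition of $\D$, exploiting the elementary fact that the first $J-1$ coordinates of a point of $\D$ already determine the associated weight vector. Indeed, for $w\in\W$ one has $w^{J}=1-\sum_{j=1}^{J-1}w^{j}$, so the map $w\mapsto(w^{1},\dots,w^{J-1})$ is injective on $\W$. Consequently, for any fixed tuple $(w^{1},\dots,w^{J-1})$ with $w\coloneqq(w^{1},\dots,w^{J-1},1-\sum_{j<J}w^{j})\in\W$, the ``fiber'' of $\D$ above that tuple is exactly the half-line $\cb{(w^{1},\dots,w^{J-1},p)\mid p\le\P_{1}(w)}$. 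This single structural observation is essentially all the argument needs; in particular, neither the convexity of $\D$ nor any regularity of $w\mapsto\P_{1}(w)$ will be invoked.

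Now fix $w\in\W$ with $\P_{1}(w)$ finite and set $d\coloneqq(w^{1},\dots,w^{J-1},\P_{1}(w))^{\mathsf{T}}$. Finiteness of $\P_{1}(w)$ makes $d$ a well-defined point of $\R^{J}$, and $d\in\D$ since $d$ is precisely the ``top'' point of the fiber of $\D$ above $(w^{1},\dots,w^{J-1})$. The crux is the claim that $d+\lambda e_{(J)}\notin\D$ for every $\lambda>0$, which is immediate from the fiber description: $d+\lambda e_{(J)}=(w^{1},\dots,w^{J-1},\P_{1}(w)+\lambda)$ is associated with the same $w\in\W$, but $\P_{1}(w)+\lambda>\P_{1}(w)$ violates the defining inequality of $\D$. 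From this, $K$-maximality of $d$ is immediate: any $d'\in\D$ with $d'-d\in K\setminus\cb{0}$ would be of the form $d+\lambda e_{(J)}$ with $\lambda>0$, which has just been excluded; in particular, no point of $\D$ with the same first $J-1$ coordinates as $d$ has $J$-th coordinate exceeding $\P_{1}(w)$, which is the assertion ``there is no $d\in\D$ with $d^{J}>\P_{1}(w)$''. The same claim also shows $d\notin\interior\D$, since for every $\delta>0$ the point $d+\tfrac{\delta}{2}e_{(J)}$ lies in the $\delta$-ball around $d$ but not in $\D$; combined with $d\in\D\subseteq\cl\D$, this gives $d\in\cl\D\setminus\interior\D=\partial\D$.

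I do not anticipate a genuine obstacle: once the injectivity of $w\mapsto(w^{1},\dots,w^{J-1})$ on $\W$ is noted, the rest is bookkeeping. The one point worth a word of care is the reading of ``$K$-maximal'': since $K=\cb{\lambda e_{(J)}\mid\lambda\ge 0}$ is one-dimensional, maximality must be understood along the single direction $e_{(J)}$, i.e.\ comparisons take place within the fiber of $\D$ above $(w^{1},\dots,w^{J-1})$ — which is exactly what the displayed clarification in the statement makes explicit.
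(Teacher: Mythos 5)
Your proof is correct. The paper itself gives no argument for this proposition --- it is imported verbatim as Proposition~3.5 of \cite{convexbenson} --- so there is no in-paper proof to compare against; your direct verification from the definition of $\D$ is a valid and self-contained substitute. The three steps all check out: injectivity of $w\mapsto(w^1,\ldots,w^{J-1})$ on $\W$ (since $w^J=1-\sum_{j<J}w^j$) identifies the fiber of $\D$ over $(w^1,\ldots,w^{J-1})$ with the closed half-line $\cb{p\le\P_1(w)}$, membership $d\in\D$ and exclusion of $d+\lambda e_{(J)}$ for $\lambda>0$ are then immediate, and the boundary claim follows because every neighbourhood of $d$ meets both $\D$ and its complement. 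You are also right to flag that the parenthetical ``there is no $d\in\D$ with $d^J>\P_1(w)$'' must be read as $K$-maximality, i.e.\ restricted to the ray $d+K$; taken literally over all of $\D$ it would assert that $w$ maximizes $\P_1$ over $\W$, which is false in general.
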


\begin{proposition}\label{outerapprox}
(Propositions~4.6, 4.7 in \cite{convexbenson}) Let $v\in\R^J$. If $(x_{(v)},y_{(v)},\a_{(v)})$ is an optimal solution of $(P_2(v))$ and $\gamma_{(v)}$ is an optimal solution of $(LD_2(v))$, then $\gamma_{(v)}$ is a maximizer of $(D_V)$, that is, $(\gamma_{(v)}^1,\ldots,\gamma_{(v)}^{J-1},\P_1(\gamma_{(v)}))^{\mathsf{T}}$ is a $K$-maximal element of the lower image $\D$. Moreover, $\{z \in \mathbb{R}^J\mid  \gamma_{(v)}^\mathsf{T}z\geq   \gamma_{(v)}^\mathsf{T}(v + \alpha_{(v)}\1) \}$ is a supporting halfspace of $\P$ at the point $(v+\a_{(v)}\1)$.
\end{proposition}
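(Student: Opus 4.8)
The plan is to reduce the dual problem $(LD_2(v))$ to the weighted-sum form $\max_{\gamma\in\W}\of{\P_1(\gamma)-\gamma^{\mathsf{T}}v}$ and then read off both assertions from the geometric-duality facts already recorded for $(P_1(\cdot))$ and from the primal--dual relations in Propositions~\ref{d2v} and \ref{weakmin-primal}. For the reduction, fix $\gamma\in\R^J_+$; substituting $z=z'-v-\a\1$ with $z'\in R(Cx+Qy)$ and recalling the scalarization \eqref{phi} gives, for each $(x,y)\in\X$ and $\a\in\R$,
\[
\inf_{z\in R(Cx+Qy)-v-\a\1}\gamma^{\mathsf{T}}z=\varphi_\gamma(Cx+Qy)-\gamma^{\mathsf{T}}v-\a\,\gamma^{\mathsf{T}}\1,
\]
so the bracket in $(LD_2(v))$ equals $\a\of{1-\gamma^{\mathsf{T}}\1}+\varphi_\gamma(Cx+Qy)-\gamma^{\mathsf{T}}v$. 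Taking the infimum over the unconstrained $\a\in\R$ drives the dual objective to $-\infty$ unless $\gamma^{\mathsf{T}}\1=1$; since by Proposition~\ref{d2v} the optimal dual value equals the finite number $\P_2(v)$, the optimizer $\gamma_{(v)}$ must satisfy $\gamma_{(v)}^{\mathsf{T}}\1=1$, i.e.\ $\gamma_{(v)}\in\W$. For $\gamma\in\W$, the infimum over $(x,y)\in\X$ of $\varphi_\gamma(Cx+Qy)$ is $\P_1(\gamma)$ by the identity noted in the proof of Proposition~\ref{existenceP1}, so $(LD_2(v))$ is equivalent to $\max_{\gamma\in\W}\of{\P_1(\gamma)-\gamma^{\mathsf{T}}v}$; in particular $\P_1(\gamma_{(v)})$ is finite, and strong duality together with the optimality of $\a_{(v)}$ for $(P_2(v))$ yields $\P_1(\gamma_{(v)})-\gamma_{(v)}^{\mathsf{T}}v=\P_2(v)=\a_{(v)}$, hence, using $\gamma_{(v)}^{\mathsf{T}}\1=1$, the key identity $\P_1(\gamma_{(v)})=\gamma_{(v)}^{\mathsf{T}}\of{v+\a_{(v)}\1}$.

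Granting this, the first assertion is immediate: $\gamma_{(v)}\in\W$ is feasible for $(D_V)$ and $\P_1(\gamma_{(v)})$ is finite, so Proposition~\ref{geoduality} gives that $\of{\gamma_{(v)}^1,\ldots,\gamma_{(v)}^{J-1},\P_1(\gamma_{(v)})}^{\mathsf{T}}$ is a $K$-maximal element of the lower image $\D$, which is precisely the statement that $\gamma_{(v)}$ is a maximizer of $(D_V)$.

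For the supporting-halfspace claim, set $H=\cb{z\in\R^J\mid\gamma_{(v)}^{\mathsf{T}}z\geq\gamma_{(v)}^{\mathsf{T}}\of{v+\a_{(v)}\1}}$. For each $(x,y)\in\X$ and $z\in R(Cx+Qy)$ one has $\gamma_{(v)}^{\mathsf{T}}z\geq\varphi_{\gamma_{(v)}}(Cx+Qy)\geq\P_1(\gamma_{(v)})=\gamma_{(v)}^{\mathsf{T}}\of{v+\a_{(v)}\1}$, so $z\in H$; since $z\mapsto\gamma_{(v)}^{\mathsf{T}}z$ is continuous and $H$ is closed, this forces $\P=\cl\mathscr{R}\subseteq H$. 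Moreover $H$ is a nondegenerate halfspace because $\gamma_{(v)}\in\W$ is nonzero, its bounding hyperplane contains $v+\a_{(v)}\1$ by construction, and $v+\a_{(v)}\1$ belongs to $\mathscr{R}\subseteq\P$ since, by Proposition~\ref{weakmin-primal}, $\of{x_{(v)},y_{(v)},v+\a_{(v)}\1}$ is a weak minimizer of $(P_V)$. Hence $H$ supports $\P$ at $v+\a_{(v)}\1$.

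The main obstacle is the first paragraph: evaluating the inner infimum cleanly through $\varphi_\gamma$, exploiting the unconstrained multiplier $\a$ to confine the optimal $\gamma$ to the simplex $\W$, and then assembling $\P_1(\gamma_{(v)})=\gamma_{(v)}^{\mathsf{T}}\of{v+\a_{(v)}\1}$ via strong duality. Once that identity is in hand, the two conclusions follow directly from Propositions~\ref{geoduality} and \ref{weakmin-primal}, with only an elementary closedness and continuity argument needed to pass from $\mathscr{R}$ to $\P$.
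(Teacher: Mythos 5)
Your argument is correct. Note that the paper itself gives no proof of this proposition: it is imported wholesale from Propositions~4.6 and~4.7 of \cite{convexbenson}, so there is nothing internal to compare against line by line. What you have supplied is a self-contained derivation assembled entirely from the paper's own ingredients, and it hangs together: the elimination of the unconstrained multiplier $\a$ via the term $\a\of{1-\gamma^{\mathsf{T}}\1}$ is exactly the device the authors use later in the proof of Lemma~\ref{scalarizationlemma}, and it correctly confines the optimal dual variable to the simplex $\W$ (the separation of the infimum over $\a$ and over $(x,y)$ is legitimate because $\varphi_\gamma(Cx+Qy)<+\infty$ by axiom (A3), so no indeterminate form arises). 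Strong duality from Proposition~\ref{d2v} then yields the pivotal identity $\P_1(\gamma_{(v)})=\gamma_{(v)}^{\mathsf{T}}\of{v+\a_{(v)}\1}$, from which the $K$-maximality claim follows by Proposition~\ref{geoduality} and the supporting-halfspace claim follows from the chain $\gamma_{(v)}^{\mathsf{T}}z\geq\varphi_{\gamma_{(v)}}(Cx+Qy)\geq\P_1(\gamma_{(v)})$ together with closedness of the halfspace. Two cosmetic remarks: you do not need Proposition~\ref{weakmin-primal} to place $v+\a_{(v)}\1$ in $\P$ — feasibility of $(x_{(v)},y_{(v)},\a_{(v)})$ for $(P_2(v))$ already gives $v+\a_{(v)}\1\in R(Cx_{(v)}+Qy_{(v)})\subseteq\mathscr{R}$; and the finiteness of $\P_2(v)$ that you invoke to rule out $\gamma_{(v)}^{\mathsf{T}}\1\neq 1$ is guaranteed by the existence assertion in Proposition~\ref{d2v}, which is worth saying explicitly.
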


\begin{proposition}\label{outerapproxD}
Let $w\in\R^J_+\sm\cb{0}$. If $(x_{(w)},y_{(w)},z_{(w)})$ is an optimal solution of $(P_1(w))$, then $\{d\in\R^J\mid (z_{(w)}^J-z_{(w)}^1,\ldots,z_{(w)}^J-z_{(w)}^{J-1},1)^\mathsf{T}d\leq z_{(w)}^J\}$ is a supporting halfspace of $\D$ at the point $(w^1,\ldots,w^{J-1},\P_1(w))$.
\end{proposition}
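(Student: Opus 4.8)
The plan is to verify, by direct computation, the two requirements for the set $H:=\{d\in\R^J\mid\eta^\mathsf{T}d\le z_{(w)}^J\}$, with $\eta:=(z_{(w)}^J-z_{(w)}^1,\ldots,z_{(w)}^J-z_{(w)}^{J-1},1)^\mathsf{T}$, to be a supporting halfspace of $\D$ at $\bar d:=(w^1,\ldots,w^{J-1},\P_1(w))$: namely that $\D\subseteq H$ and that $\bar d$ lies on the bounding hyperplane of $H$ while belonging to $\D$. Throughout I use only that optimality of $(x_{(w)},y_{(w)},z_{(w)})$ for $(P_1(w))$ means $z_{(w)}\in R(Cx_{(w)}+Qy_{(w)})$, $(x_{(w)},y_{(w)})\in\X$ and $w^\mathsf{T}z_{(w)}=\P_1(w)$, together with the definition $\P_1(v)=\inf\{v^\mathsf{T}z\mid z\in R(Cx+Qy),\,(x,y)\in\X\}$. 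Since the point $\bar d$ belongs to $\D$ only when $w\in\W$, I take $w\in\W$, so that $w^\mathsf{T}\1=1$; then $\bar d\in\D$ is immediate (or one may invoke Proposition~\ref{geoduality}).

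I would first handle the support equality. Expanding, $\eta^\mathsf{T}\bar d=\sum_{j=1}^{J-1}(z_{(w)}^J-z_{(w)}^j)w^j+\P_1(w)=z_{(w)}^J\sum_{j=1}^{J-1}w^j-\sum_{j=1}^{J-1}z_{(w)}^jw^j+\P_1(w)$. Substituting $\sum_{j=1}^{J-1}w^j=1-w^J$ (this is where $w\in\W$ enters) collapses the first two terms to $z_{(w)}^J-w^\mathsf{T}z_{(w)}$, so $\eta^\mathsf{T}\bar d=z_{(w)}^J-w^\mathsf{T}z_{(w)}+\P_1(w)=z_{(w)}^J$, using $w^\mathsf{T}z_{(w)}=\P_1(w)$. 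Hence $\bar d$ lies on the bounding hyperplane of $H$.

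The containment $\D\subseteq H$ is the real content, and it hinges on a single observation: feasibility for $(P_1(v))$ is independent of $v$, so the triple $(x_{(w)},y_{(w)},z_{(w)})$ is feasible for $(P_1(v))$ for every $v\in\W$, whence $\P_1(v)\le v^\mathsf{T}z_{(w)}$. Now take an arbitrary $d=(v^1,\ldots,v^{J-1},p)\in\D$, with associated weight $v\in\W$ and $p\le\P_1(v)$ (there is nothing to prove when $\P_1(v)=-\infty$). Since $p$ enters $\eta^\mathsf{T}d$ with coefficient $1>0$, we get $\eta^\mathsf{T}d\le\sum_{j=1}^{J-1}(z_{(w)}^J-z_{(w)}^j)v^j+\P_1(v)$, and the same algebra as before — now with $\sum_{j=1}^{J-1}v^j=1-v^J$ — rewrites the right-hand side as $z_{(w)}^J-v^\mathsf{T}z_{(w)}+\P_1(v)$, which is $\le z_{(w)}^J$ precisely because $\P_1(v)\le v^\mathsf{T}z_{(w)}$. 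Thus $d\in H$, so $\D\subseteq H$, and combined with $\bar d$ lying in $\D$ on the bounding hyperplane this proves the claim.

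I do not anticipate a genuine difficulty: the argument is bookkeeping once one notices (i) that $z_{(w)}$ is a common feasible point for the entire family $(P_1(v))_{v\in\W}$, giving $\P_1(v)\le v^\mathsf{T}z_{(w)}$, and (ii) that the normalization $v^\mathsf{T}\1=1$ is exactly what makes the coordinate $z_{(w)}^J$ cancel cleanly in both computations. The only points needing a word of care are the degenerate case $\P_1(v)=-\infty$ (harmless, since such $v$ contribute no points to $\D$) and the restriction to $w\in\W$ (needed for $\bar d$ to lie in $\D$ at all).
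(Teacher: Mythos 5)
Your proof is correct, and it is in fact more complete than the one in the paper. The paper's argument consists only of the support-equality computation $\eta^{\mathsf{T}}\bar d=-w^{\mathsf{T}}z_{(w)}+z_{(w)}^J+\P_1(w)=z_{(w)}^J$ together with a citation of Proposition~\ref{geoduality} for $\bar d$ being a boundary ($K$-maximal) point of $\D$; the containment $\D\subseteq H$ is left implicit. You supply exactly that missing half, and you do it with the right observation: feasibility for $(P_1(v))$ does not depend on $v$, so the single triple $(x_{(w)},y_{(w)},z_{(w)})$ witnesses $\P_1(v)\le v^{\mathsf{T}}z_{(w)}$ for every $v\in\W$, and the same cancellation driven by $v^{\mathsf{T}}\1=1$ then gives $\eta^{\mathsf{T}}d\le z_{(w)}^J$ for all $d\in\D$. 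This makes the proof self-contained rather than resting on the boundary-point assertion alone (which by itself does not yield that $H$ contains $\D$). You are also right to flag that the computation needs $w^{\mathsf{T}}\1=1$: the proposition is stated for $w\in\R^J_+\sm\cb{0}$, but both your algebra and the paper's identity $\sum_{j=1}^{J-1}(z_{(w)}^J-z_{(w)}^j)w^j=z_{(w)}^J-w^{\mathsf{T}}z_{(w)}$ use $\sum_{j=1}^{J-1}w^j=1-w^J$, so the result as proved holds for $w\in\W$ (which is how it is used in the dual algorithm). Your handling of the degenerate case $\P_1(v)=-\infty$ is harmless but unnecessary here, since Proposition~\ref{existenceP1} guarantees finiteness of $\P_1(v)$ under the compactness assumption on $\X$.
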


\begin{proof}
From Proposition~\ref{geoduality}, $d\coloneqq (w^1,\ldots,w^{J-1},\P_1(w))$ is a boundary point of $\D$. Moreover, it follows that
\[
(z_{(w)}^J-z_{(w)}^1,\ldots,z_{(w)}^J-z_{(w)}^{J-1},1)^\mathsf{T}d=-w^{\mathsf{T}}z_{(w)}+z_{(w)}^J+\P_1(w)=z^J
\]
since $\P_1(w)=w^{\mathsf{T}}z_{(w)}$. Hence, the assertion of the proposition follows.
\end{proof}

\begin{proposition}\label{approximation}
(Proposition~3.10 in \cite{convexbenson}) Let $\epsilon>0$.
\begin{enumerate}[(a)]
\item Let $\bar{\mathcal{Z}}$ be a finite weak $\epsilon$-solution of $(P_V)$. Then,
\[
\P^{in}(\bar{\mathcal{Z}})\coloneqq\co\of{\cb{z\in\R^J\mid (x,y,z)\in\bar{\mathcal{Z}}}}+\R^J_+
\]
is an inner approximation of the upper image $\P$, that is, $\P^{in}(\bar{\mathcal{Z}})\subseteq \P$. Moreover,
\[
\D^{out}(\bar{\mathcal{Z}})=\cb{d\in\R^J\mid \of{z^J-z^1,\ldots,z^J-z^{J-1},1}^\mathsf{T}d\leq z^J, \;\forall z\in\bar{\mathcal{Z}}}
\]
is an outer approximation of the lower image $\D$, that is, $\D\subseteq \D^{out}(\bar{\mathcal{Z}})$.
\item Let $\bar{\W}$ be a finite $\epsilon$-solution of $(D_V)$. Then,
\[
\D^{in}(\bar{\W})\coloneqq\co(\cb{(w^1,\ldots,w^{J-1},\P_1(w))^{\mathsf{T}}\mid w\in \bar{\W}})-K
\]
is an inner approximation of $\D$, that is, $\D^{in}(\bar{\W})\subseteq \D$. Moreover,
\[
\P^{out}(\bar{\W})=\cb{z\in\R^J\mid w^\mathsf{T}z\geq \P_1(w), \;\forall w\in\bar{\W}}
\]
is an outer approximation of $\P$, that is, $\P\subseteq \P^{out}(\bar{\W})$.
\end{enumerate}
\end{proposition}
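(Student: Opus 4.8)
\textbf{Proof proposal for Proposition~\ref{approximation}.}
The plan is to verify the four set inclusions one at a time, leaning on two structural facts already established: $\P$ is a closed convex upper set (Remark~\ref{completelattice}) and $\D$ is a closed convex $K$-lower set (Remark~\ref{completelatice3}). Both the inner/outer dichotomy and the primal/dual symmetry reduce to one elementary observation: whenever $(x,y)\in\X$ and $z\in R(Cx+Qy)$, the triple $(x,y,z)$ is feasible for $(P_1(w))$ for every $w\in\W$, so $w^{\mathsf{T}}z\geq\P_1(w)$; moreover $\P_1(w)$ is finite by Proposition~\ref{existenceP1}. Note that the parameter $\epsilon$ and the $\epsilon$-infimality conditions never enter: for part~(a) only the minimality clause of Definition~\ref{approximatesolution} (equivalently, Definition~\ref{weakminimizer}) is used, and for part~(b) merely the fact that $\bar{\W}$ is a finite subset of $\W$, together with finiteness of $\bar{\mathcal{Z}}$ and $\bar{\W}$.

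For the two inner-approximation claims I would argue by containment of generators. In part~(a), each $(x,y,z)\in\bar{\mathcal{Z}}$ is a weak minimizer, hence $z\in R(Cx+Qy)$ with $(x,y)\in\X$, so $z\in\mathscr{R}\subseteq\P$; since $\P$ is convex and $\P+\R^J_+=\P$, it contains $\co(\cb{z\mid(x,y,z)\in\bar{\mathcal{Z}}})+\R^J_+=\P^{in}(\bar{\mathcal{Z}})$. In part~(b), for each $w\in\bar{\W}$ the point $(w^1,\ldots,w^{J-1},\P_1(w))^{\mathsf{T}}$ lies in $\D$ directly from the definition of $\D$; since $\D$ is convex and $\D-K=\D$, it contains $\D^{in}(\bar{\W})$.

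For the two outer-approximation claims I would show that $\P$ (resp.\ $\D$) lies inside each of the finitely many halfspaces whose intersection defines $\P^{out}(\bar{\W})$ (resp.\ $\D^{out}(\bar{\mathcal{Z}})$). For $\P^{out}(\bar{\W})$, the observation above gives $\mathscr{R}\subseteq\cb{z\in\R^J\mid w^{\mathsf{T}}z\geq\P_1(w)}$ for every $w\in\bar{\W}$; each such set is closed, so $\P=\cl\mathscr{R}$ lies in it, hence in their intersection. For $\D^{out}(\bar{\mathcal{Z}})$, fix $(x,y,z)\in\bar{\mathcal{Z}}$ and let $d=(d^1,\ldots,d^J)\in\D$; writing $w=(d^1,\ldots,d^{J-1},1-\sum_{j<J}d^j)\in\W$, one has $d^J\leq\P_1(w)$, and a short rearrangement using $\sum_{j\in\J}w^j=1$ identifies $(z^J-z^1,\ldots,z^J-z^{J-1},1)^{\mathsf{T}}d$ with $z^J-w^{\mathsf{T}}z+d^J$; then $d^J\leq\P_1(w)\leq w^{\mathsf{T}}z$ yields $(z^J-z^1,\ldots,z^J-z^{J-1},1)^{\mathsf{T}}d\leq z^J$, and intersecting over $(x,y,z)\in\bar{\mathcal{Z}}$ gives $\D\subseteq\D^{out}(\bar{\mathcal{Z}})$. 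This is the computation underlying Proposition~\ref{outerapproxD}, now carried out for an arbitrary weak minimizer rather than for an optimal solution of a weighted-sum scalarization.

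The argument is essentially bookkeeping, and the one place where care is genuinely needed --- the step I expect to be most error-prone --- is the coordinate identity in the $\D^{out}(\bar{\mathcal{Z}})$ case: one must split off the $J$-th coordinate correctly and use both $w^j=d^j$ for $j<J$ and $w^J=1-\sum_{j<J}w^j$. Apart from that, the proof only assembles Remarks~\ref{completelattice} and~\ref{completelatice3}, Proposition~\ref{existenceP1}, and the feasibility observation.
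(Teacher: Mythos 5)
Your proposal is correct, but note that the paper itself offers no proof of this proposition: it is imported verbatim as Proposition~3.10 of \cite{convexbenson}, so there is nothing internal to compare against line by line. Your self-contained verification is sound and uses exactly the right ingredients: the two inner approximations follow from containment of generators together with $\P\in\G$ (Remark~\ref{completelattice}) and $\cl\co(\D-K)=\D$ (Remark~\ref{completelatice3}); the two outer approximations follow from the single feasibility observation that $w^{\mathsf{T}}z\geq\P_1(w)$ whenever $z\in R(Cx+Qy)$ with $(x,y)\in\X$ and $w\in\W$, with finiteness of $\P_1(w)$ supplied by Proposition~\ref{existenceP1}. Your coordinate identity $(z^J-z^1,\ldots,z^J-z^{J-1},1)^{\mathsf{T}}d=z^J-w^{\mathsf{T}}z+d^J$ for $d\in\D$ and $w=(d^1,\ldots,d^{J-1},1-\sum_{j<J}d^j)$ checks out and is indeed the same computation as in Proposition~\ref{outerapproxD}, now applied to an arbitrary weak minimizer rather than an optimal solution of $(P_1(w))$. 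Two remarks are worth making explicit. First, your observation that $\epsilon$-infimality plays no role is correct and clarifying: $\epsilon$ only governs how tight the sandwich \eqref{outerinner} is, not the inner/outer inclusions themselves, which hold for any finite set of weak minimizers of $(P_V)$ and any finite subset of $\W$ with finite $\P_1$-values. Second, you implicitly assume that a finite $\epsilon$-solution $\bar{\W}$ of $(D_V)$ is a subset of the feasible region $\W$; this is how the notion is defined in \cite{convexbenson} and is consistent with how $\bar{\W}$ is populated in Algorithms~\ref{primalalg} and~\ref{alg1}, but since the paper does not restate that definition, it would be worth flagging as the one external fact you rely on.
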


The problems $(P_1(w))$, $(P_2(v))$ and the above propositions form a basis for the primal and dual convex Benson algorithms. These algorithms are explained briefly in the following sections.

\subsection{Primal algorithm}

The primal algorithm starts with an initial outer approximation $\P^0$ for the upper image $\P$. To construct $\P^0$, for each $j\in\J$, the algorithm computes the supporting halfspace of $\P$ with direction vector $e_{(j)}$ by solving the weighted-sum scalarization problem $(P_1(e_{(j)}))$. If $(x_{(j)},y_{(j)},z_{(j)})$ is an optimal solution of $(P_1(e_{(j)}))$, then this halfspace supports the upper image $\P$ at the point $z_{(j)}$. Then, $\P^0$ is defined as the intersection of these $J$ supporting halfspaces.

The algorithm iteratively obtains a sequence  $\P^0\supseteq\P^1\supseteq\P^2\supseteq\ldots\supseteq\P$ of finer outer approximations, it updates a set $\bar{\mathcal{Z}}$ and $\bar{\W}$ of weak minimizers and maximizers for $(P_V)$ and $(D_V)$, respectively. At iteration $k$, the algorithm first computes $\mathcal{V}^k$, that is the set of all vertices of $\P^k$. For each vertex $v\in\mathcal{V}^k$, an optimal solution $(x_{(v)},y_{(v)},\a_{(v)})$ to $(P_2(v))$ is computed. The optimal $\a_{(v)}$ is the minimum step-length required to find a  boundary point $(v+\a_{(v)}\1)$ of $\P$. Since the triplet $(x_{(v)},y_{(v)},v+\a_{(v)}\1)$ is a weak minimizer of $(P_V)$ by Proposition~\ref{weakmin-primal}, it is added to the set $\bar{\mathcal{Z}}$. Then, an optimal solution $\gamma_{(v)}$ of the dual problem $(LD_2(v))$ is computed, which is a maximizer for $(D_V)$ (\emph{see} Proposition~\ref{outerapprox}) and is added to the set $\bar{\W}$. This procedure is continued until a vertex $v$ with a step-length greater than an error parameter $\epsilon>0$ is detected. For such $v$, using Proposition~\ref{outerapprox}, a supporting halfspace of $\P$ at point $(v+\a_{(v)}\1)$ is obtained. The outer approximation is updated as $\P^{k+1}$ by intersecting $\P^k$ with this supporting halfspace. The algorithm terminates when all the vertices are in $\epsilon$-distance to the upper image $\P$.

At the termination, the algorithm computes inner and outer approximations $\P^{in}(\bar{\mathcal{Z}}), \P^{out}(\bar{\W})$ for the upper image $\P$ and $\D^{in}(\bar{\W}), \D^{out}(\bar{\mathcal{Z}})$ for the lower image $\D$ using Proposition~\ref{approximation}. Note that both $\P^{out}(\bar{\W})$ and $\P^{k}$ are outer approximations for $\P$. However, $\P^{out}(\bar{\W})$ is a finer outer approximation than $\P^{k}$. The reason is that when $\P^k$ is updated, only the vertices in more than $\epsilon$-distance to $\P$ are used. On the other hand, all the vertices are considered when calculating $\P^{out}(\bar{\W})$. Furthermore, the algorithm returns a finite weak $\epsilon$-solution $\bar{\mathcal{Z}}$ to $(P_V)$ and a finite $\epsilon$-solution $\bar{\W}$ to $(D_V)$ (\emph{see} Theorem~4.9 in \cite{convexbenson}).

The steps of the primal algorithm are provided as Algorithm~\ref{primalalg}.

\begin{algorithm*}[h!]                  
\caption{Primal Approximation Algorithm }    
\label{primalalg}                           
\begin{algorithmic}[1]
    \STATE Compute an optimal solution $(x_{(j)}, y_{(j)}, z_{(j)})$ to $(P_1(e_{(j)}))$ for each $j \in\J$;

    \STATE Let $\P^0 = \{z \in \mathbb{R}^J: e_{(j)}^\mathsf{T}z \geq \P_1(e_{(j)}), \;\forall j\in\J\}$;

    \STATE $k \leftarrow 0$; \\
            $\bar{\mathcal{Z}} \leftarrow \{(x_{(j)}, y_{(j)},z_{(j)}) \mid  j \in\J\}$;\\
            $\bar{\W} \leftarrow \{e_{(j)} \mid  j\in\J\}$; \\
    \REPEAT

    \STATE $\mathcal{M} \leftarrow \mathbb{R}^J$;

    \STATE Compute the set $\mathcal{V}^k$ of the vertices of $\P^k$;

    \FOR{each $v\in\mathcal{V}^k$}


    \STATE Compute an optimal solution $(x_{(v)},y_{(v)},\alpha_{(v)})$ of $(P_2(v))$ and an optimal solution $\gamma_{(v)}$ of $(LD_2(v))$;

    \STATE $\bar{\mathcal{Z}} \leftarrow \bar{\mathcal{Z}} \cup \{(x_{(v)},y_{(v)},v + \alpha_{(v)}\textbf{1})\}$;\\

          $\bar{\W} \leftarrow \bar{\W} \cup \{\gamma_{(v)}\}$;\\


    \IF{$\alpha_{(v)} > \epsilon$}

        \STATE $\mathcal{M} \leftarrow \mathcal{M} \cap \left\{z \in \mathbb{R}^J: \gamma_{(v)}^\mathsf{T}z \geq  \gamma_{(v)}^\mathsf{T}(v + \alpha_{(v)}\textbf{1}) \right\}$ ;
         \STATE break;

    \ENDIF
    \ENDFOR
    \IF{$\mathcal{M} \neq \mathbb{R}^J$}
        \STATE $\P^{k+1} \leftarrow \P^k \cap \mathcal{M}$, $k \leftarrow k+1$;

    \ENDIF
    \UNTIL{$\mathcal{M} = \mathbb{R}^J$};
    \STATE Compute $\P^{in}(\bar{\mathcal{Z}}), \P^{out}(\bar{\W}), \D^{in}(\bar{\W}), \D^{out}(\bar{\mathcal{Z}})$ as in Proposition~\ref{approximation};
    \RETURN $\left\{
              \begin{array}{l}
                \bar{\mathcal{Z}} \hbox{: A finite weak $\epsilon$-solution to $(P_V)$;} \\
                \bar{\W} \hbox{: A finite $\epsilon$-solution to $(D_V)$;} \\
                \P^{in}(\bar{\mathcal{Z}}), \P^{out}(\bar{\W}), \D^{in}(\bar{\W}), \D^{out}(\bar{\mathcal{Z}});
              \end{array}
            \right.$
\end{algorithmic}
\end{algorithm*}

\subsection{Dual algorithm}

The steps of the dual algorithm follow in a way that is similar to the primal algorithm; however, as a major difference, at each iteration, an outer approximation for the dual image $\D$ is obtained. Moreover, the dual algorithm does not require solving $(P_2(v))$; only $(P_1(w))$ is solved for different weights $w$ in the initialization step as well as in the iterations. An optimal solution of $(P_1(w))$ is used to update the outer approximation of $\D$ as in Proposition~\ref{outerapproxD}.

At the termination, the algorithm computes inner and outer approximations for the upper image $\P$ and lower image $\D$ using Proposition~\ref{approximation}. Furthermore, the algorithm returns a finite weak $\epsilon$-solution $\bar{\mathcal{Z}}$ to $(P_V)$ and a finite $\epsilon$-solution $\bar{\W}$ to $(D_V)$ (\emph{see} Theorem~4.14 in \cite{convexbenson}).

The steps of the dual algorithm are provided as Algorithm~\ref{alg1}.

\begin{algorithm*}[h!]                  
\caption{Dual Approximation Algorithm }    
\label{alg1}                           
\begin{algorithmic}[1]
    \STATE Compute an optimal solution $(x_{(\eta)}, y_{(\eta)}), z_{(\eta)})$ to $(P_1(\eta))$ for $\eta =(\frac{1}{J},\ldots,\frac{1}{J})^\mathsf{T}$ ;

    \STATE Let $\D^0 = \{d\in\R^J \mid \P_1(\eta) \geq d^J \} $;

    \STATE $k \leftarrow 0$; \\
           $\bar{\mathcal{Z}} \leftarrow \{(x_{(\eta)}, y_{(\eta)},z_{(\eta)})\}$;\\
           $\bar{\W} \leftarrow \{\eta\}$; \\

    \REPEAT

    \STATE $\mathcal{M} \leftarrow \mathbb{R}^J$;

    \STATE Compute the set $\mathcal{V}^k$ of vertices of $\D^k$;

    \FOR{each $t=(t^1,\ldots,t^{J-1},t^J)^{\mathsf{T}}\in\mathcal{V}^k$}

     \STATE Let $w=(t^1,\ldots,t^{J-1},1-\sum_{j=1}^{J-1}t^j)^{\mathsf{T}}$;

    \STATE Compute an optimal solution $(x_{(w)}, y_{(w)}, z_{(w)})$ to $(P_1(w))$;

    \STATE $\bar{\mathcal{Z}} \leftarrow \bar{\mathcal{Z}} \cup \{(x_{(w)}, y_{(w)},z_{(w)})\}$; \\

     \IF{$w \in\mathbb{R}_{++}^J$ or $t^J - \P_1(w) \leq \epsilon$}
        \STATE $\bar{\W} \leftarrow \bar{\W} \cup \{w\}$; \\

     \ENDIF

    \IF{$t^J - \P_1(w) > \epsilon$}
        \STATE $\mathcal{M} \leftarrow \mathcal{M} \cap \cb{d\in\R^J\mid (z_{(w)}^J-z_{(w)}^1,\ldots,z_{(w)}^J-z_{(w)}^{J-1},1)^\mathsf{T}d\leq z_{(w)}^J}$;
        \STATE break;
    \ENDIF
    \ENDFOR
     \IF{$\mathcal{M} \neq \mathbb{R}^J$}
        \STATE $\D^{k+1} \leftarrow \D^k \cap \mathcal{M}$, $k \leftarrow k+1$;
    \ENDIF
    \UNTIL{$\mathcal{M} = \mathbb{R}^J$};
    \STATE Compute $\P^{in}(\bar{\mathcal{Z}}), \P^{out}(\bar{\W}); \D^{in}(\bar{\W}), \D^{out}(\bar{\mathcal{Z}})$ as in Proposition~\ref{approximation};
    \RETURN $\left\{
              \begin{array}{l}
                \bar{\mathcal{Z}} \hbox{: A finite weak $\epsilon$-solution to $(P_V)$;} \\
                \bar{\W} \hbox{: A finite $\epsilon$-solution to $(D_V)$;} \\
                \P^{in}(\bar{\mathcal{Z}}), \P^{out}(\bar{\W}), \D^{in}(\bar{\W}), \D^{out}(\bar{\mathcal{Z}});
              \end{array}
            \right.$
\end{algorithmic}
\end{algorithm*}

\section{Scenario decomposition for scalar problems}\label{scalarization}

In this section, we are interested in solving the scalarization problems $(P_1(w))$ and $(P_2(v))$. Note that these problems are single-objective multivariate risk-averse two-stage stochastic programming problems. For such problems, the problem size increases as the number of scenarios, $I$, gets larger. An efficient solution procedure is possible by scenario-wise decompositions. In the univariate case, for risk-neutral two-stage stochastic programming problems, see \cite{birgebook}, \cite{birgepaper}, \cite{kall}, \cite{ruszdecomp}, \cite{wets} for solution methodologies by scenario-wise decomposition. For scenario decompositions in two-stage risk-averse stochastic programming problems, the reader is refered to \cite{ahmed}, \cite{miller}, \cite{fabian}, \cite{kristoffersen} for problems with a single coherent risk-averse objective function and to \cite{chanceconstrained} for chance-constrained problems. Scenario-wise decomposition solution methodology is also possible for multi-stage stochastic programming problems with dynamic coherent risk measure as suggested in \cite{collado}. Different from these studies, the scalarization problems we solve are two-stage risk-averse stochastic programming problems with \emph{multivariate convex} risk measures; therefore, these problems require different solutions techniques than the existing ones.

\subsection{The problem of weighted sum scalarization}

Let $w\in\R^J_+\sm\cb{0}$. The weighted sum scalarization problem $(P_1(w))$ defined in Section~\ref{problem} can be rewritten more explicitly as:
\begin{align*}
&\text{min }\; \;                    w^{\mathsf{T}}z\tag{$P_1(w)$}\\
&\text{s.t.}\quad \;                      z\in R(Cx + Qy)\\
& \quad\quad\;\;    Ax = b\\
& \quad\quad\;\;     T_i x + W_i y_i = h_i, \quad  \forall i\in\O\\
& \quad\quad\;\;      z\in\R^J,\; x\in \R^M_+,\; y_i\in\R^N_+,\quad  \forall i\in\O.
\end{align*}

We propose a Lagrangian dual reformulation of $(P_1(w))$ whose objective function is scenario-wise decomposable. The details are provided in Section~\ref{P1decompose}. Based on this dual reformulation, in Section~\ref{P1cuttingplane}, we propose a dual cutting-plane algorithm for $(P_1(w))$, called the \emph{dual bundle method}, which provides an optimal dual solution. As the Benson algorithms in Section~\ref{algorithms} require an optimal primal solution in addition to an optimal dual solution, in Section~\ref{P1recovery}, we show that such a primal solution can be obtained from the dual of the so-called \emph{master problem} in the dual bundle method.

\subsubsection{Scenario decomposition}\label{P1decompose}

To derive a decomposition algorithm for $(P_1(w))$, we \emph{randomize} the first stage variable $x\in\R^M$ and treat it as an element $x\in \L^M$ with realizations $x_1,\ldots,x_I\in\R^M$. To ensure the equivalence of the new formulation with the previous one, we add the so-called \emph{nonanticipativity constraints}
\[
p_i(x_i -\E\sqb{x})  = 0, \; \forall i\in\O,
\]
which are equivalent to $x_1=\ldots=x_I$.

Let us introduce
\begin{equation}\label{f-defn}
\F\coloneqq\cb{(x,y)\in \L^M\times \L^N \mid (x_i,y_i)\in  \F_i,\; \forall i\in\O},
\end{equation}
where, for each $i\in\O$,
\[
\F_i \coloneqq \cb{(x_i,y_i)\in\R^M_+ \times \R^N_+\mid Ax_i = b,\; T_i x_i + W_i y_i = h_i}.
\]
With this notation and using the nonanticipativity constraints, we may rewrite $(P_1(w))$ as follows:
\begin{align*}
&\text{min}\; \;                     w^\mathsf{T}z\tag{$P^\prime_1(w)$}\\
&\text{s.t.}\quad                      z\in R(Cx+Qy)\\
& \quad\quad\;\;      p_i\of{x_i - \E\sqb{x}} = 0,\quad \forall i\in\O\\
& \quad\quad\;\;     (x,y)\in \F,\; z\in \R^J.
\end{align*}
Note that the optimal value of $(P_1^\prime(w))$ is $\mathscr{P}_1(w)$.

The following theorem provides a dual formulation of $(P^\prime_1(w))$ by relaxing the nonanticipativity constraints in a Lagrangian fashion. We call this dual formulation as $(D_1(w))$.

\begin{theorem}\label{p1thm}
It holds
\[
\P_1(w)=\sup_{\mu\in \M_1^J,\lambda\in\L^M} \cb{\sum_{i\in\O}  f_i(\mu_i,\lambda_i,w)-\b(\mu,w)\mid \E\sqb{\lambda}=0},\tag{$D_1(w)$}
\]
where, for each $i\in\O$, $\mu_i\in\R^J_+$, $\lambda_i\in\R^M$,
\begin{equation}\label{scenariosubp}
f_i(\mu_i,\lambda_i,w)\coloneqq \inf_{(x_i,y_i)\in\F_i} \of{w^{\mathsf{T}}\sqb{\mu_i \cdot (Cx_i+Q_i y_i)}+p_i \lambda_i^{\mathsf{T}}x_i},
\end{equation}
and $\beta$ is defined by \eqref{beta}.
\end{theorem}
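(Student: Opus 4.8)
The plan is to start from the explicit formulation $(P_1'(w))$ and dualize the nonanticipativity constraints $p_i(x_i - \E[x]) = 0$, $i \in \O$, using a Lagrange multiplier $\lambda \in \L^M$. Writing the Lagrangian and using that $\sum_{i\in\O} p_i \lambda_i^{\mathsf T}(x_i - \E[x]) = \sum_{i\in\O} p_i \lambda_i^{\mathsf T} x_i - \E[x]^{\mathsf T}\E[\lambda]$, I would first argue weak duality is not enough — I need strong duality. Since $\F$ is defined by linear (affine) constraints, $\X$ (equivalently the feasible set of $(P_1'(w))$ after imposing nonanticipativity) is a polyhedron, and the objective, after also dualizing the risk constraint $z \in R(Cx+Qy)$, is a convex function; so I would invoke a polyhedral strong duality / no-duality-gap result (of the type used for the other dual pairs cited from \cite{borwein}, or a standard convex duality theorem with relative-interior/polyhedral constraint qualification) to justify exchanging $\inf$ and $\sup$.

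Next I would handle the risk constraint. Using the scalarization $\varphi_w$ from \eqref{phi} and its dual representation \eqref{scalar-dual}, note that for fixed $(x,y)\in\F$ we have $\inf_{z\in R(Cx+Qy)} w^{\mathsf T}z = \varphi_w(Cx+Qy) = \sup_{\mu\in\M_1^J}\big(w^{\mathsf T}\E^\mu[Cx+Qy] - \b(\mu,w)\big)$. So $(P_1'(w))$ becomes
\[
\P_1(w) = \inf_{\substack{(x,y)\in\F \\ \E[x_i - \E[x]] \text{-type constr.}}} \; \sup_{\mu\in\M_1^J}\Big( w^{\mathsf T}\E^\mu[Cx+Qy] - \b(\mu,w)\Big),
\]
and dualizing the nonanticipativity constraints with $\lambda$ (and requiring $\E[\lambda]=0$, which is exactly the condition making the $\E[x]$-cross-term vanish) gives the Lagrangian
\[
\sup_{\mu\in\M_1^J,\,\lambda\in\L^M,\,\E[\lambda]=0}\; \inf_{(x,y)\in\F}\Big( w^{\mathsf T}\E^\mu[Cx+Qy] + \sum_{i\in\O} p_i\lambda_i^{\mathsf T}x_i - \b(\mu,w)\Big).
\]
Here I use that $w^{\mathsf T}\E^\mu[Cx+Qy] = \sum_{i\in\O} w^{\mathsf T}\big(\mu_i\cdot(Cx_i+Q_iy_i)\big)$ by the definition of $\E^\mu$ given before Example~\ref{multiCVaR}. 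Since $\F = \prod_{i\in\O}\F_i$ is a product set and the inner objective is separable across $i \in \O$ once $\mu,\lambda,w$ are fixed, the inner infimum splits as $\sum_{i\in\O} f_i(\mu_i,\lambda_i,w) - \b(\mu,w)$ with $f_i$ exactly as in \eqref{scenariosubp}. Finally, the $\sup$ over $\mu\in\R^J_+\setminus\{0\}$ versus $\mu\in\M_1^J$ with weights is reconciled by noting $\b(\mu,w)=+\infty$ off the effective domain, so the supremum may be written over $\mu\in\M_1^J$ as stated.

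The main obstacle I expect is the rigorous justification of strong duality (no duality gap) when dualizing the nonanticipativity constraints in the presence of the \emph{set-valued} risk constraint. One must ensure that the interchange of $\inf_{(x,y)}$ and $\sup_\mu$ (from the dual representation of $\varphi_w$) is legitimate simultaneously with the Lagrangian dualization in $\lambda$; a clean way is to first rewrite $(P_1'(w))$ as a single convex minimization problem over $(x,y,z)$ with the polyhedral constraints $(x,y)\in\F$, the linear nonanticipativity constraints, and the convex constraint $z\in R(Cx+Qy)$ (a closed convex set by Remark~\ref{completelattice}), then apply a convex programming strong duality theorem — using that $\F$ is compact (so the primal value is attained and finite by Proposition~\ref{existenceP1}) and that the nonanticipativity constraints are affine, which supplies the needed constraint qualification — and recognize the resulting dual objective as the asserted separable expression. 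Boundary subtleties with $w$ having zero components, and with $\mu$ on the boundary of $\M_1^J$, are handled automatically since $\b(\cdot,w)$ is a closed convex function taking value $+\infty$ outside its domain.
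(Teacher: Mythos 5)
Your proposal follows the same overall route as the paper's proof: replace the inner minimization over $z$ by the scalarization $\varphi_w(Cx+Qy)$ and its dual representation \eqref{scalar-dual}, dualize the nonanticipativity constraints with a multiplier $\lambda$ normalized so that $\E\sqb{\lambda}=0$, and observe that the resulting inner infimum over $(x,y)\in\F$ separates scenario-wise into the $f_i$'s of \eqref{scenariosubp}. The identities you use (bilinearity of $w^{\mathsf{T}}\E^\mu\sqb{Cx+Qy}$ in $\mu$ and $(x,y)$, the vanishing of the cross term $\E\sqb{x}^{\mathsf{T}}\E\sqb{\lambda}$, the product structure of $\F$) are exactly the ones the paper relies on, so the computational core is right.

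The one place where your justification is weaker than the paper's is the interchange of $\inf_{(x,y)}$ with $\sup_{\mu}$. The paper performs this step first, via Sion's minimax theorem (\cite{sion}): the feasible set $\cb{(x,y)\in\F\mid p_i(x_i-\E\sqb{x})=0,\ \forall i\in\O}$ is convex and compact, the coupling term is bilinear, and $-\b(\cdot,w)$ is concave in $\mu$, so the minimax equality holds; only afterwards, for each fixed $\mu$, does it invoke strong LP duality to dualize the nonanticipativity constraints and obtain the decomposition \eqref{Flagrange}. Your proposal lumps both interchanges into a single appeal to a convex-programming strong duality theorem with an affine constraint qualification. That qualification genuinely covers the $\lambda$-dualization (the nonanticipativity constraints are affine and $\F$ is polyhedral and compact), but it does not by itself legitimize pulling $\sup_\mu$ outside $\inf_{(x,y)}$: the variable $\mu$ is not a Lagrange multiplier of any explicit constraint of $(P_1^\prime(w))$ --- it enters only through the dual representation of $\varphi_w$ --- so a separate minimax argument is still needed (or, alternatively, a full conjugate-duality treatment of the set-valued constraint $z\in R(Cx+Qy)$ in the spirit of \cite{borwein}, which the paper uses for $(P_2(v))$ but not here). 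You correctly flag this interchange as the main obstacle; the missing ingredient is precisely Sion's theorem applied to the compact convex feasible set above. With that inserted, your argument closes and coincides with the paper's.
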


\begin{proof}
We may write
\begin{align}
\P_1(w) &= \inf_{(x,y)\in \F, z\in\R^J}\cb{w^{\mathsf{T}}z\mid z\in R(Cx+Qy),\; p_i\of{x_i - \E\sqb{x}} = 0,\; \forall i\in\O}\label{p1eq1}\\
 &=\inf_{(x,y)\in \F}\cb{\inf_{z\in R(Cx+Qy)}w^{\mathsf{T}}z \mid p_i\of{x_i - \E\sqb{x}} = 0,\; \forall i\in\O}\label{p1eq2}\\
 &=\inf_{(x,y)\in \F}\cb{\sup_{\mu\in\M_1^J}\of{w^{\mathsf{T}}\E^\mu\sqb{Cx+Qy}-\b(\mu,w)} \mid p_i\of{x_i - \E\sqb{x}} = 0,\; \forall i\in\O},\label{p1eq3}
\end{align}
where the passage to the last line is by \eqref{scalar-dual}. Using the minimax theorem of \cite{sion}, we may interchange the infimum and the supremum in the last line. This yields
\begin{align}\label{P1F}
\P_1(w)
=\sup_{\mu\in \M_1^J}\of{F(\mu,w) -\b(\mu,w)},
\end{align}
where, for each $\mu\in\M_1^J$,
\begin{equation}\label{Ffunction}
F(\mu,w)\coloneqq \inf_{(x,y)\in\F}\cb{w^{\mathsf{T}}\E^\mu\sqb{Cx+Qy}\mid p_i\of{x_i - \E\sqb{x} }= 0,\; \forall i\in\O}.
\end{equation}
Let us fix $\mu\in\M_1^J$. Note that $F(\mu,w)$ is the optimal value of a large-scale linear program where the only coupling constraints between the decision variables for different scenarios are the nonanticipativity constraints. To obtain a formulation of this problem that can be decomposed into a subproblem for each scenario, we dualize the nonanticipativity constraints. The reader is referred to Section~2.4.2 of \cite{shapiro} for the details on the dualization of nonanticipativity constraints. To that end, let us assign Lagrange multipliers $\tilde{\lambda}_1, \ldots,\tilde{\lambda}_I \in \R^M$ for the non-anticipativity constraints. Note that we may consider them as the realizations of a random Lagrange multiplier $\tilde{\lambda}\in\L^M$. By strong duality for linear programming,
\begin{align*}
F(\mu,w)&=\sup_{\tilde{\lambda}\in \L^M} \inf_{(x,y)\in\F} \ell (x,y,\tilde{\lambda}),
\end{align*}
where the Lagrangian $\ell$ is defined by
\begin{align*}
\ell (x,y,\tilde{\lambda})&\coloneqq w^{\mathsf{T}}\E^\mu\sqb{Cx+Qy}+\sum_{i\in\O}p_i {\tilde{\lambda}_i}^\mathsf{T}\of{x_i - \E\sqb{x}}  \\
&=w^{\mathsf{T}}\E^\mu\sqb{Cx+Qy}+\sum_{i\in\O}p_i \big(\tilde{\lambda}_i - \E\big[\tilde{\lambda}\big]\big)^{\mathsf{T}}x_i,
\end{align*}
for each $x\in \L^M, y\in\L^N, \tilde{\lambda}\in\L^M$. Given such $x,y,\tilde{\lambda}$, note that
\begin{align*}
\ell(x,y,\tilde{\lambda})=\ell(x,y,\lambda)&= w^{\mathsf{T}}\E^\mu\sqb{Cx+Qy}+\sum_{i\in\O}p_i \lambda_i^{\mathsf{T}}x_i\\
&=w^{\mathsf{T}}\E^\mu\sqb{Cx+Qy}+\E\sqb{\lambda^\mathsf{T}x},
\end{align*}
if we set $\lambda \coloneqq \tilde{\lambda} - \E\big[\tilde{\lambda}\big]$. In this case, $\E\sqb{\lambda} = 0$. Therefore, we obtain
\begin{align}
F(\mu,w)&=\sup_{\lambda\in \L^M}\cb{ \inf_{(x,y)\in\F} \ell (x,y,\lambda)\mid \E\sqb{\lambda}=0}\notag\\
&= \sup_{\lambda\in \L^M} \cb{\inf_{(x,y)\in\F} \of{w^{\mathsf{T}}\E^\mu\sqb{Cx+Qy}+\E\sqb{\lambda^\mathsf{T}x}}\mid \E\sqb{\lambda}=0}\notag\\
&= \sup_{\lambda\in \L^M}\cb{\sum_{i\in\O} f_i(\mu_i,\lambda_i,w)\mid\E\sqb{\lambda}=0},\label{Flagrange}
\end{align}
where $f_i(\mu_i,\lambda_i,w)$, defined by \eqref{scenariosubp}, is the optimal value of the subproblem for scenario $i\in\O$. The assertion of the theorem follows from \eqref{P1F} and \eqref{Flagrange}.
\end{proof}

\subsubsection{The dual bundle method}\label{P1cuttingplane}

To solve $(D_1(w))$ given in Theorem~\ref{p1thm}, we propose a dual bundle method which constructs \emph{affine} upper approximations for $f_i(\cdot,\cdot,w)$, $i\in\O$, and $-\beta(\cdot,w)$. The upper approximations are based on the subgradients of these functions at points $(\mu^{(\ell)},\lambda^{(\ell)})$ that are generated iteratively by solving the so-called \emph{master problem}. The reader is referred to \cite{ruszbook} for the details of the bundle method.

For $i\in\O,\mu^\prime_i\in\R^J_+,\lambda^\prime\in\R^M$, we denote by $\partial_{\mu_i,\lambda_i}f_i(\mu^\prime_i,\lambda^\prime_i,w)$ the subdifferential of the concave function $f_i(\cdot,\cdot,w)$ at the point $(\mu^\prime_i,\lambda^\prime_i)$, that is, $\partial_{\mu_i,\lambda_i} f_i(\mu^\prime_i,\lambda^\prime_i,w)$ is the set of all vectors $(g_{\mu^\prime_i},g_{\lambda^\prime_i})\in\R^{J+M}$ such that
\begin{equation}\label{fcut}
f_i(\mu_i,\lambda_i,w)\leq f_i(\mu^\prime_i,\lambda^\prime_i,w)+g_{\mu^\prime_i}^{\mathsf{T}}(\mu_i-\mu^\prime_i)+g_{\lambda^\prime_i}^{\mathsf{T}}(\lambda_i-\lambda^\prime_i),
\end{equation}
for all $\mu_i\in\R^J_+, \lambda_i\in\R^M$. Note that the right hand side of \eqref{fcut} provides an \emph{affine} upper approximation for $f_i(\cdot,\cdot,w)$. For this reason, \eqref{fcut} is called as a \emph{cut}.

Similarly, we denote by $\partial_{\mu}(-\beta)(\mu^\prime,w)$ the subdifferential of the concave function $-\beta(\cdot,w)$ at a point $\mu^\prime\in\M_1^J$, which is the set of all vectors $\rho_{\mu^\prime}=(\rho_{\mu^\prime_1},\ldots,\rho_{\mu^\prime_I})\in\R^{J\times I}$ such that
\begin{equation}\label{betacut}
-\beta(\mu,w)\leq -\beta(\mu^\prime,w)+\sum_{i\in\O}\rho_{\mu^\prime_i}^{\mathsf{T}}(\mu_i-\mu^\prime_i)
\end{equation}
for all $\mu\in\M_1^J$. We call \eqref{betacut} a \emph{cut} for $-\beta(\cdot,w)$.

In the next proposition, we show how to compute the subdifferential of the function $f_i(\cdot,\cdot,w)$ at a point $(\mu_i^\prime,\lambda^\prime_i)$.

\begin{proposition}\label{subgradientf}
For $i\in\O,\mu^\prime_i\in\R^J_+,\lambda_i^\prime\in\R^M$, let
\begin{equation*}
A_i(\mu^\prime_i,\lambda^\prime_i,w) \coloneqq \argmin_{(x_i,y_i)\in\F_i}\of{w^{\mathsf{T}}\sqb{\mu^\prime_i \cdot (Cx_i+Q_i y_i)}+p_i (\lambda^\prime_i)^{\mathsf{T}}x_i}.
\end{equation*}
Then,
\begin{align*}
&\partial_{\mu_i,\lambda_i}f_i(\mu^\prime_i,\lambda^\prime_i,w)=\co \cb{\of{w\cdot( Cx_i + Q_i y_i),p_i x_i}\mid (x_i,y_i)\in A_i(\mu^\prime_i,\lambda^\prime_i,w)}.\\
\end{align*}
\end{proposition}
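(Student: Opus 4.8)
The plan is to recognize $f_i(\cdot,\cdot,w)$ as the value function of a parametric linear program and to apply the standard Danskin-type subdifferential formula for the infimum of a family of affine functions. Concretely, for fixed $i\in\O$ and $w$, write $\ell_i(x_i,y_i;\mu_i,\lambda_i)\coloneqq w^{\mathsf{T}}[\mu_i\cdot(Cx_i+Q_iy_i)]+p_i\lambda_i^{\mathsf{T}}x_i$, and observe that for each fixed feasible $(x_i,y_i)\in\F_i$ the map $(\mu_i,\lambda_i)\mapsto \ell_i(x_i,y_i;\mu_i,\lambda_i)$ is affine; indeed it is linear in $(\mu_i,\lambda_i)$ with gradient exactly $(w\cdot(Cx_i+Q_iy_i),\,p_ix_i)\in\R^{J+M}$, since $w^{\mathsf{T}}[\mu_i\cdot(Cx_i+Q_iy_i)] = \mu_i^{\mathsf{T}}[w\cdot(Cx_i+Q_iy_i)]$. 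Thus $f_i(\mu_i,\lambda_i,w)=\inf_{(x_i,y_i)\in\F_i}\ell_i(x_i,y_i;\mu_i,\lambda_i)$ is a pointwise infimum of affine functions, hence concave, which is consistent with the phrasing of the statement.

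First I would establish that $f_i(\cdot,\cdot,w)$ is finite (hence proper concave) on a neighborhood of $(\mu_i',\lambda_i')$: since $\F_i$ is a nonempty compact polyhedron (it is a face of the compact set $\X$ projected onto scenario $i$, or at worst one can invoke compactness of $\X$ and nonemptiness of the feasible region), the infimum in \eqref{scenariosubp} is attained, so $A_i(\mu_i',\lambda_i',w)\neq\emptyset$, and $f_i$ is real-valued everywhere. Then I would invoke the classical result on the subdifferential of an infimum of affine functions over a compact index set (e.g., Danskin's theorem / Ioffe–Tikhomirov, or Theorem~2.87 in Ruszczy\'nski's book, or the argument via conjugate functions): if $f(\theta)=\inf_{s\in S}\langle a(s),\theta\rangle + c(s)$ with $S$ compact and $a,c$ continuous, then at any $\theta'$ the superdifferential is $\partial f(\theta')=\co\{a(s)\mid s\in S,\ \langle a(s),\theta'\rangle+c(s)=f(\theta')\}$, i.e., the convex hull of gradients over the active index set. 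Applying this with $\theta=(\mu_i,\lambda_i)$, $S=\F_i$, $a(x_i,y_i)=(w\cdot(Cx_i+Q_iy_i),\,p_ix_i)$, $c\equiv 0$, the active set is precisely the set of optimal solutions $A_i(\mu_i',\lambda_i',w)$, yielding the claimed formula.

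The main obstacle — really the only subtle point — is justifying the convex-hull/Danskin formula in a way that is clean for a linear program whose feasible set $\F_i$ is a polytope with possibly infinitely many optimal solutions (a whole face). One careful route avoiding topological fuss: by LP duality $f_i(\mu_i,\lambda_i,w)$ equals the value of the dual LP, which is a maximum of finitely many affine functions of $(\mu_i,\lambda_i)$ (one per dual vertex), so $f_i$ is a finite concave polyhedral function, and its superdifferential at $(\mu_i',\lambda_i')$ is the convex hull of the gradients of the dual-active pieces; one then translates these dual gradients back into the primal objects $(w\cdot(Cx_i+Q_iy_i),p_ix_i)$ via complementary slackness, identifying them with the optimal primal face $A_i(\mu_i',\lambda_i',w)$. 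I expect the cleanest write-up is the direct one: show ``$\supseteq$'' by checking \eqref{fcut} directly — for any $(x_i,y_i)\in A_i(\mu_i',\lambda_i',w)$ and any $(\mu_i,\lambda_i)$, $f_i(\mu_i,\lambda_i,w)\le \ell_i(x_i,y_i;\mu_i,\lambda_i)=f_i(\mu_i',\lambda_i',w)+(w\cdot(Cx_i+Q_iy_i))^{\mathsf{T}}(\mu_i-\mu_i')+(p_ix_i)^{\mathsf{T}}(\lambda_i-\lambda_i')$ — then take convex hull using that $\partial_{\mu_i,\lambda_i}f_i$ is convex; and show ``$\subseteq$'' by the Danskin/polyhedral argument above, noting the set on the right is already closed (convex hull of a compact set, as $A_i(\mu_i',\lambda_i',w)$ is a compact polytope and the map to gradients is affine) and nonempty.
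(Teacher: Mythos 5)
Your proposal is correct and follows essentially the same route as the paper: the paper's proof simply notes that $\varphi_i(x_i,y_i,\cdot,\cdot,w)$ is affine in $(\mu_i,\lambda_i)$, that $\varphi_i(\cdot,\cdot,\mu_i,\lambda_i,w)$ is affine and continuous, that $\F_i$ is compact, and then cites Theorem~2.87 in \cite{ruszbook} — exactly the Danskin-type argument you identify as your main route. The alternative polyhedral/LP-duality and two-inclusion arguments you sketch are valid but unnecessary extras relative to the paper's one-line invocation of that theorem.
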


\begin{proof}
Let $\varphi_i(x_i,y_i,\mu_i,\lambda_i,w)\coloneqq w^{\mathsf{T}}\sqb{\mu_i \cdot (Cx_i+Q_i y_i)}+p_i \lambda_i^{\mathsf{T}}x_i$. The function $\varphi_i(x_i,y_i,\cdot,\cdot,w)$ is affine for all $(x_i,y_i)\in\F_i$. The function $\varphi_i(\cdot,\cdot,\mu_i,\lambda_i,w)$ is also affine and continuous for all $\mu_i\in\R^J_+, \lambda_i\in\R^M$. Finally, the set $\F_i$ is compact by assumption. By Theorem~2.87 in \cite{ruszbook}, the assertion of the proposition follows.
\end{proof}

Next, we show how to compute a subgradient of the function $-\beta(\cdot,w)$ at a point $\mu^\prime$.

\begin{proposition}\label{subgradientbeta}
Recall that the set $\A=\cb{u\in\L^J\mid 0\in R(u)}$ is the acceptance set of $R$. For $\mu^\prime\in\L^J_+$, let
\begin{equation*}
B(\mu^\prime,w) \coloneqq \argmax_{u\in\A}w^{\mathsf{T}}\E^{\mu^\prime }\sqb{u}
\end{equation*}
and assume that $B(\mu^\prime,w)\neq \emptyset$.
Then,
\begin{align*}
&\partial_{\mu}(-\beta)(\mu^\prime,w)\supseteq \co \cb{(w\cdot u_1,\ldots,w\cdot u_I) \mid u=(u_1,\ldots,u_I)\in B(\mu^\prime,w)}.\\
\end{align*}
\end{proposition}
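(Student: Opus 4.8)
The plan is to use, as in the proof of Proposition~\ref{subgradientf}, that $-\beta(\cdot,w)$ is a pointwise infimum of affine functions of $\mu$, so that any affine piece active at $\mu^\prime$ furnishes an element of the superdifferential \eqref{betacut}; since here the index set $\A$ is non-compact, I will only aim at the inclusion ``$\supseteq$'' rather than equality. First, to expose the $\mu$-dependence, note that the $j$-th component of $\E^\mu[u]$ is $\sum_{i\in\O}\mu^j_iu^j_i$, so for every $u\in\L^J$,
\[
w^{\mathsf{T}}\E^\mu[u]=\sum_{j\in\J}w^j\sum_{i\in\O}\mu^j_iu^j_i=\sum_{i\in\O}(w\cdot u_i)^{\mathsf{T}}\mu_i ,
\]
which is linear in $\mu=(\mu_1,\ldots,\mu_I)$ with coefficient block $w\cdot u_i$ in coordinate $i$. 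Hence $\beta(\cdot,w)$ is a supremum over $u\in\A$ of the linear functions $\mu\mapsto\sum_{i\in\O}(w\cdot u_i)^{\mathsf{T}}\mu_i$, so it is convex, and $-\beta(\cdot,w)$ is an infimum of the affine functions $\mu\mapsto-\sum_{i\in\O}(w\cdot u_i)^{\mathsf{T}}\mu_i$, hence concave (as already observed after \eqref{beta}).

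Next, fix $\mu^\prime\in\L^J_+$ with $B(\mu^\prime,w)\neq\emptyset$ and take any $u\in B(\mu^\prime,w)$; since $u$ is an element of the finite-dimensional space $\L^J$, the assumption forces $\beta(\mu^\prime,w)=\sum_{i\in\O}(w\cdot u_i)^{\mathsf{T}}\mu_i^\prime$ to be a finite number. Because $u\in\A$, the affine function $\mu\mapsto-\sum_{i\in\O}(w\cdot u_i)^{\mathsf{T}}\mu_i$ lies above $-\beta(\cdot,w)$ everywhere and coincides with it at $\mu^\prime$; subtracting these two facts gives
\[
-\beta(\mu,w)\ \leq\ -\beta(\mu^\prime,w)+\sum_{i\in\O}\bigl(-w\cdot u_i\bigr)^{\mathsf{T}}(\mu_i-\mu_i^\prime),\qquad\forall\,\mu\in\M_1^J ,
\]
which is exactly a cut of the form \eqref{betacut} (the coefficient blocks that emerge are $-w\cdot u_i$, i.e.\ $w\cdot u_i$ with the sign inherited from subdifferentiating $-\beta$ rather than $\beta$). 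Thus the corresponding stacked vector belongs to $\partial_\mu(-\beta)(\mu^\prime,w)$. Finally, $\partial_\mu(-\beta)(\mu^\prime,w)$ is convex — by \eqref{betacut} it is an intersection of half-spaces of $\R^{J\times I}$ indexed by $\mu\in\M_1^J$ — so it contains the convex hull of the coefficient vectors obtained as $u$ ranges over $B(\mu^\prime,w)$, which is the claimed inclusion.

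I do not expect a real obstacle; the only point worth flagging is why one can assert only ``$\supseteq$'', in contrast to the equality in Proposition~\ref{subgradientf}. Unlike $\F_i$, the acceptance set $\A$ is unbounded — by (A1) it is stable under subtracting elements of $\L^J_+$ — so the maximum defining $\beta(\mu^\prime,w)$ need not be attained, which is precisely why the hypothesis $B(\mu^\prime,w)\neq\emptyset$ is imposed, and why the maximum-function subdifferential formula (Theorem~2.87 in \cite{ruszbook}) cannot be invoked to get the reverse inclusion; the reverse inclusion is neither claimed nor needed for the dual bundle method. A purely cosmetic remark is that \eqref{betacut} need only hold for $\mu\in\M_1^J$, so restricting the affine majorants from $\R^{J\times I}$ to this affine subset is harmless.
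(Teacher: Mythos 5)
Your proof is correct, and it reaches the conclusion by a more elementary and self-contained route than the paper. The paper's own proof is a one-line application of the max-function subdifferential theorem (Theorem~2.87 in \cite{ruszbook}) to $\varphi(u,\mu,w)=w^{\mathsf{T}}\E^{\mu}\sqb{u}$, exactly parallel to the proof of Proposition~\ref{subgradientf}; you instead verify the defining inequality \eqref{betacut} directly, using that $\beta(\cdot,w)$ is a supremum of the linear functions $\mu\mapsto\sum_{i\in\O}(w\cdot u_i)^{\mathsf{T}}\mu_i$ and that any $u\in B(\mu^\prime,w)$ makes the corresponding affine majorant of $-\beta(\cdot,w)$ active at $\mu^\prime$, then pass to the convex hull by convexity of the subdifferential. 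What your route buys is an explicit explanation of why only the inclusion ``$\supseteq$'' is available here, in contrast to the equality in Proposition~\ref{subgradientf}: the acceptance set $\A$ is unbounded, so the compactness hypothesis required for the reverse inclusion in the cited theorem fails — a point the paper leaves entirely implicit. One remark on signs: your computation yields the coefficient blocks $-w\cdot u_i$, whereas the proposition as printed lists $w\cdot u_i$. Your version is the right one — it is precisely what the paper itself uses downstream in \eqref{u-ell} of Appendix~\ref{proofof2}, where $\rho_{\mu_i^{(\ell)}}=-w\cdot u_i^{(\ell)}$ — so the displayed statement carries a sign slip, and your parenthetical flagging of it is appropriate rather than a defect of the argument.
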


\begin{proof}
Let $\varphi(u,\mu,w)\coloneqq w^{\mathsf{T}}\E^{\mu}\sqb{u}$. The function $\varphi(u,\cdot,w)$ is affine for all $u\in\A$. The function $\varphi(\cdot,\mu,w)$ is also affine and continuous for all $\mu\in\M_1$. By Theorem~2.87 in \cite{ruszbook}, the assertion of the proposition follows.
\end{proof}

\begin{remark}
For practical risk measures, such as the multivariate entropic risk measure (\emph{see} Example~\ref{multient}), the function $-\beta(\cdot,w)$ is differentiable and the subdifferential is a singleton. For coherent multivariate risk measures, such as the multivariate CVaR (\emph{see} Example~\ref{multiCVaR}), there exists a convex cone $\mathcal{Q}\subseteq\M_1^J$ such that $-\beta(\mu,w)=0$ if $\mu\in \mathcal{Q}$ and $-\beta(\mu,w)=-\infty$ otherwise. For multivariate CVaR with risk-aversion parameter $\nu\in (0,1)^J$,
\[
\mathcal{Q}=\cb{\mu\in\M_1^J\;\Big\vert\; \frac{\mu^j_i}{p_i}\leq \frac{1}{1-\nu^j},\;\forall i\in\O,j\in\J}.
\]
For a coherent multivariate risk measure, $\partial_{\mu}(-\beta)(\mu^\prime)$ is the set of all normal directions of $\mathcal{Q}$ at $\mu^\prime$. It follows that the cut \eqref{betacut} is always satisfied; therefore, it can be ignored.
\end{remark}

At each iteration $k$ of the bundle method, we solve the \emph{master problem}
\begin{align}
&\text{ max }\; \;          \sum_{i\in\O}\vartheta_i+\eta-\sum_{i\in \O}\varrho \norm{\mu_i-\bar{\mu}^{(k)}_i}^2 -\sum_{i\in \O}\varrho \norm{\lambda_i-\bar{\lambda}^{(k)}_i}^2 \;\tag{$MP_1(w)$}\\
&\text{ s.t. }\quad\;                      \vartheta_i \leq f_i(\mu_i^{(\ell)},\lambda_i^{(\ell)},w) + g_{\mu^{(\ell)}_i}^{\mathsf{T}}(\mu_i-\mu^{(\ell)}_i) + g_{\lambda^{(\ell)}_i}^{\mathsf{T}}(\lambda_i-\lambda_i^{(\ell)}), \quad \forall i\in\O,  \ell\in\mathcal{L}\label{grad1}\\
& \;\quad\quad\quad  \eta \leq -\beta(\mu^{(\ell)},w) + \sum_{i\in\O}\rho_{\mu^{(\ell)}_i}^{\mathsf{T}}(\mu_i-\mu_i^{(\ell)}), \quad \forall \ell\in\mathcal{L}\label{grad2}\\
& \;\quad\quad\quad      \sum_{i\in\O}p_i\lambda_i=0\label{explambda}\\
& \;\quad\quad\quad      \sum_{i\in\O}\mu_i=\mathbf{1}\label{muprob}\\
& \;\quad\quad\quad      \mu_i\in\R^J_+, \lambda_i\in \R^M, \vartheta_i\in\R,\quad \forall i\in\O\label{mupos}\\
& \;\quad\quad\quad   \eta\in\R,
\end{align}
with $\mathcal{L}=\cb{1,\ldots,k}$, $\varrho>0$. Here, $\norm{\cdot}$ denotes the Euclidean norm on an appropriate dimension. Note that constraints \eqref{muprob} and \eqref{mupos} for $\mu$ are equivalent to having $\mu\in\M_1^J$, and constraint \eqref{explambda} for $\lambda$ is equivalent to having $\E[\lambda]=0$. $\bar{\mu}^{(k)}\in\M_1^J, \bar{\lambda}^{(k)}\in\L^M$ with $\E[\bar{\lambda}^{(k)}]=0$ are parameters of the problem, called the \emph{centers}, that are initialized and updated within the bundle method. The quadratic terms in the objective function are Moreau-Yosida regularization terms and they make the overall objective function strictly convex. These regularization terms enforce an optimal solution of $(MP_1(w))$ to be close to the centers.

Let $(\mu^{(k+1)},\lambda^{(k+1)},\vartheta^{(k+1)},\eta^{(k+1)})$ be an optimal solution for $(MP_1(w))$. Computing the subgradients
\begin{align*}
&(g_{\mu_i^{(k+1)}},g_{\lambda_i^{(k+1)}})\in\partial_{\mu_i,\lambda_i}f_i(\mu_i^{(k+1)},\lambda_i^{(k+1)}),\;\forall  i\in\O,\\
& (\rho_{\mu_1^{(k+1)}},\ldots,\rho_{\mu_I^{(k+1)}} ) \in \partial_{\mu}\beta(\mu^{(k+1)},w)
\end{align*}
at this optimal solution and using \eqref{fcut} and \eqref{betacut}, a cut for each of the functions $f_i(\cdot,\cdot,w), i\in\O,$ and $-\beta(\cdot,w)$ are added to $(MP_1(w))$ at the next iteration in order to improve the upper approximations for these functions.

The centers are updated in the following fashion. At iteration $k$, one checks if the difference between the objective value of $(D_1(w))$ evaluated at the point $(\mu^{(k)},\lambda^{(k)})$, that is, $\sum_{i\in\O}f_i(\mu_i^{(k)},\lambda_i^{(k)},w)-\beta(\mu^{(k)},w)$, and the objective value evaluated at the centers $\bar{\mu}^{(k-1)},\bar{\lambda}^{(k-1)}$, that is, $\sum_{i\in\O}f_i(\bar{\mu}^{(k-1)}_i,\bar{\lambda}^{(k-1)}_i,w)-\beta(\bar{\mu}^{(k-1)},w)$, is larger than a threshold. If so, this means that an optimal solution of $(D_1(w))$ is close to $(\mu^{(k)},\lambda^{(k)})$. Therefore, the new centers $\bar{\mu}^{(k)},\bar{\lambda}^{(k)}$ are set to $\mu^{(k)},\lambda^{(k)}$, respectively. This is called a \emph{descent step}. Otherwise, the centers remain unchanged, that is, $\bar{\mu}^{(k)},\bar{\lambda}^{(k)}$ are set to $\bar{\mu}^{(k-1)},\bar{\lambda}^{(k-1)}$, respectively.

The steps of our dual bundle method are provided as Algorithm~\ref{decomp1}. By \cite[Theorem~7.16]{ruszbook}, the bundle method generates a sequence $(\bar{\mu}^{(k)},\bar{\lambda}^{(k)})_{k\in\mathbb{N}}$ that converges to an optimal solution of $(D_1(w))$ as $k\rightarrow\infty$. In practice, the stopping condition in line~22 of Algorithm~\ref{decomp1} is not satisfied. Therefore, it is a general practice to stop the algorithm when
\begin{equation}\label{approxstop}
\sum_{i\in\O}\vartheta_i^{(k+1)}+\eta^{(k+1)} - \bar{F}^{(k+1)}\leq \varepsilon
\end{equation}
for some small constant $\varepsilon>0$.

\begin{remark}\label{singlecut}
Note that the objective function of $(MP_1(w))$ can be replaced with
\[
\vartheta+\eta-\sum_{i\in \O}\varrho \norm{\mu_i-\bar{\mu}^{(k)}_i}^2 -\sum_{i\in \O}\varrho \norm{\lambda_i-\bar{\lambda}^{(k)}_i}^2
\]
and constraint~\eqref{grad1} can be replaced with
\[
\vartheta \leq \sum_{i\in\O}\of{f_i(\mu_i^{(\ell)},\lambda_i^{(\ell)},w) + g_{\mu^{(\ell)}_i}^{\mathsf{T}}(\mu_i-\mu^{(\ell)}_i) + g_{\lambda^{(\ell)}_i}^{\mathsf{T}}(\lambda_i-\lambda_i^{(\ell)})}, \;\forall  \ell\in\mathcal{L}.
\]
This way one would obtain an upper approximation for the sum $\sum_{i\in\O}f_i(\cdot,\cdot,w)$. Compared to the multiple cuts in \eqref{grad1}, this provides a looser upper approximation for $\sum_{i\in\O}f_i(\cdot,\cdot,w)$. However, while one adds $I=\abs{\O}$ cuts at each iteration in the multiple cuts version, this approach adds a single cut.
\end{remark}

\begin{algorithm*}[h!]         
\caption{A Dual Bundle Method for $(P_1(w))$}    
\label{decomp1}                           
\begin{algorithmic}[1]
    \STATE $k \leftarrow 0$, $\mathcal{L}\leftarrow\emptyset$, $\gamma\in (0,1)$, $\vartheta_i^{(1)}\leftarrow\infty$ for each $i\in\O$, $\eta^{(1)}\leftarrow\infty$, $\bar{F}^{(1)}\leftarrow 0$;

    \STATE Let $\mu^{(1)}\in\M_1^J,\lambda^{(1)}\in\L^M$ be such that $\E\sqb{\lambda^{(1)}}=0$;

     \REPEAT
    \STATE $k \leftarrow k + 1$;

     \FOR{each $i\in\O$}
        \STATE  Compute an optimal solution $(x_i^{(k)},y_i^{(k)})$ and the optimal value $f_i(\mu_i^{(k)},\lambda_i^{(k)},w)$ of the subproblem
          \[
           \min_{(x_i,y_i)\in\F_i} \of{w^{\mathsf{T}}\sqb{\mu_i^{(k)} \cdot (Cx_i+Q_i y_i)}+p_i (\lambda_i^{(k)})^{\mathsf{T}}x_i};
          \]
          \STATE Compute subgradients $g_{\mu_i^{(k)}} = w\cdot(Cx_i^{(k)}+Q_iy_i^{(k)})$, $g_{\lambda_i^{(k)}} = p_ix_i^{(k)}$;

    \ENDFOR

    \STATE Compute $\beta(\mu^{(k)},w)$ and subgradient $(\rho_{\mu_1^{(k)}},\ldots,\rho_{\mu_I^{(k)}} ) \in \partial_{\mu}(-\beta)(\mu^{(k)},w)$ ;

     \STATE $F^{(k)}\leftarrow \sum_{i\in\O}  f_i(\mu_i^{(k)},\lambda_i^{(k)},w)-\b(\mu^{(k)},w)$;


     \IF{$F^{(k)} < \sum_{i\in\O}\vartheta_i^{(k)}+\eta^{(k)}$}
        \STATE  $\mathcal{L}\leftarrow \mathcal{L} \cup \{k\}$;
     \ENDIF

    \IF{($k = 1$) or ($k\geq 2$ and $F^{(k)} \geq (1-\gamma)\bar{F}^{(k)}+ \gamma (\sum_{i\in\O}\vartheta_i^{(k)}+\eta^{(k)})$)}
        \STATE $\bar{\mu}^{(k)} \leftarrow \mu^{(k)}$, $\bar{\lambda}^{(k)}\leftarrow \lambda^{(k)}$;
       \ELSE
         \STATE $\bar{\mu}^{(k)} \leftarrow \bar{\mu}^{(k-1)}$, $\bar{\lambda}^{(k)} \leftarrow \bar{\lambda}^{(k-1)}$;
     \ENDIF

    \STATE Solve the master problem. Let $(\mu^{(k+1)},\lambda^{(k+1)},\vartheta^{(k+1)},\eta^{(k+1)})$ be an optimal solution;


    \STATE  (Optional) Remove all cuts whose dual variables at the solution of master problem are zero;

 \STATE  $\bar{F}^{(k+1)}\leftarrow\sum_{i\in\O}  f_i(\bar{\mu}^{(k)}_i,\bar{\lambda}^{(k)}_i,w)-\b(\bar{\mu}^{(k)},w)$;

    \UNTIL{$\sum_{i\in\O}\vartheta_i^{(k+1)}+\eta^{(k+1)}=\bar{F}^{(k+1)}$};

    \RETURN $\left\{
              \begin{array}{ll}
                \bar{F}^{(k+1)}          & \hbox{: Optimal value $\P_1(w)$;} \\
                (\bar{\mu}^{(k)},\bar{\lambda}^{(k)}) & \hbox{: An optimal solution of $(D_1(w))$;} \\
              \end{array}
            \right.$
\end{algorithmic}
\end{algorithm*}

\subsubsection{Recovery of primal solution}\label{P1recovery}

Both the primal and the dual Benson algorithms require an optimal solution $(x_{(w)},y_{(w)},z_{(w)})$ of the problem $(P^\prime_1(w))$. Therefore, in Theorem~\ref{P1primal}, we suggest a procedure to recover an optimal primal solution from the solution of the master problem $(MP_1(w))$.

\begin{theorem}\label{P1primal}
Let $\mathcal{L}=\cb{1,\ldots,k}$ be the index set at the last iteration of the dual bundle method with the approximate stopping condition \eqref{approxstop} for some $\varepsilon>0$. Let $n+1$ be the first descent iteration after the approximate stopping condition is satisfied and let $\mathcal{L}^\prime=\cb{1,\ldots,n}$. For $(MP_1(w))$ with centers $\bar{\mu}^{(k)},\bar{\lambda}^{(k)}$ and index set $\mathcal{L}^{\prime}$, let $\tau=(\tau_i^{(\ell)})_{i\in\O,\ell\in\mathcal{L}^{\prime}},\theta=(\theta^{(\ell)})_{\ell\in\mathcal{L}^{\prime}},\sigma\in\R^M,\Psi\in\R^J,\nu=(\nu_i)_{i\in\O}$ be the Lagrangian dual variables assigned to the constraints \eqref{grad1}, \eqref{grad2}, \eqref{explambda}, \eqref{muprob}, \eqref{mupos}, respectively, with $\tau_i^{(\ell)}\geq 0,\theta^{(\ell)}\geq 0,\nu_i\in\R^J_+$ for each $i\in\O,\ell\in\mathcal{L}^{\prime}$. Let $(x_i^{(\ell)}, y_i^{(\ell)})$ be an optimal solution of the subproblem in line~6 of Algorithm~\ref{decomp1} for each $i\in\O$ and $\ell\in\mathcal{L}^{\prime}$. Let
\[
\of{\tau^{(n+1)}=(\tau_i^{(\ell,n+1)})_{i\in\O,\ell\in\mathcal{L}^\prime},\theta^{(n+1)}=(\theta^{(\ell,n+1)})_{\ell\in\mathcal{L}^\prime},\sigma^{(n+1)},\Psi^{(n+1)},\nu^{(n+1)}=(\nu_i^{(n+1)})_{i\in\O}}
\]
be a dual optimal solution for $(MP_1(w))$. Let $x_{(w)}=((x_{(w)})_i)_{i\in\O},y_{(w)}=((y_{(w)})_i)_{i\in\O}$ be defined by
\[
(x_{(w)})_i\coloneqq \sum_{\ell\in\mathcal{L}^\prime}\tau_i^{(\ell)}x_i^{(\ell)},\quad (y_{(w)})_i\coloneqq \sum_{\ell\in\mathcal{L}^\prime}\tau_i^{(\ell)}y_i^{(\ell)}.
\]
Moreover, let $z_{(w)}$ be a minimizer of the problem
\[
\inf_{z\in R(Cx_{(w)}+Qy_{(w)})}w^{\mathsf{T}}z.
\]
Then, $(x_{(w)},y_{(w)},z_{(w)})$ is an approximately optimal solution of $(P^\prime_1(w))$ in the following sense:
\begin{enumerate}[(a)]
\item $((x_{(w)})_i,(y_{(w)})_i)\in\F_i$ for each $i\in\O$.
\item $z_{(w)}\in R(Cx_{(w)}+Qy_{(w)})$.
\item As $\varepsilon\rightarrow 0$, it holds $(x_{(w)})_i-\sigma^{(n+1)}\rightarrow 0$ for each $i\in\O$.
\item As $\varepsilon\rightarrow 0$, it holds $w^{\mathsf{T}}z_{(w)}\rightarrow \mathscr{P}_1(w)$.
\end{enumerate}
\end{theorem}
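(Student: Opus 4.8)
I would derive everything from the Karush--Kuhn--Tucker system of the quadratic master problem $(MP_1(w))$ (with index set $\mathcal{L}'$ and centers $\bar\mu^{(k)},\bar\lambda^{(k)}$). This is legitimate because all of its constraints \eqref{grad1}--\eqref{mupos} are linear, so no constraint qualification is needed, a dual optimum exists, and complementary slackness holds at the primal--dual pair $\big((\mu^{(n+1)},\lambda^{(n+1)},\vartheta^{(n+1)},\eta^{(n+1)}),(\tau^{(n+1)},\theta^{(n+1)},\sigma^{(n+1)},\Psi^{(n+1)},\nu^{(n+1)})\big)$. Stationarity in $\vartheta_i$ gives $\sum_{\ell\in\mathcal{L}'}\tau_i^{(\ell)}=1$; together with $\tau_i^{(\ell)}\ge 0$ this exhibits $((x_{(w)})_i,(y_{(w)})_i)=\sum_{\ell}\tau_i^{(\ell)}(x_i^{(\ell)},y_i^{(\ell)})$ as a convex combination of the subproblem minimizers $(x_i^{(\ell)},y_i^{(\ell)})\in\F_i$, and since $\F_i$ is a polyhedron this is (a). Part (b) is immediate: $z_{(w)}$ is by definition a minimizer of $\inf_{z\in R(Cx_{(w)}+Qy_{(w)})}w^\mathsf{T}z=\varphi_w(Cx_{(w)}+Qy_{(w)})$, a value that is finite by (A3) and attained by the argument used in Proposition~\ref{existenceP1}, so $z_{(w)}\in R(Cx_{(w)}+Qy_{(w)})$.

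\textbf{Part (c).} I would write stationarity of $(MP_1(w))$ in $\lambda_i$. The variable $\lambda_i$ enters only through the cuts \eqref{grad1}, with gradient $g_{\lambda_i^{(\ell)}}=p_ix_i^{(\ell)}$ (Proposition~\ref{subgradientf}), through \eqref{explambda} with multiplier $\sigma^{(n+1)}$, and through the Moreau--Yosida term $-\varrho\|\lambda_i-\bar\lambda_i^{(k)}\|^2$. Collecting these yields $2\varrho\big(\lambda_i^{(n+1)}-\bar\lambda_i^{(k)}\big)=p_i\big((x_{(w)})_i-\sigma^{(n+1)}\big)$ (up to the sign convention for the free multiplier $\sigma$), i.e. $(x_{(w)})_i-\sigma^{(n+1)}=\tfrac{2\varrho}{p_i}\big(\lambda_i^{(n+1)}-\bar\lambda_i^{(k)}\big)$. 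It then suffices that the proximal step $\lambda^{(n+1)}-\bar\lambda^{(k)}$ vanish as $\varepsilon\to 0$: as $\varepsilon\to 0$ the stopping rule \eqref{approxstop} makes the bundle method run longer, and by its convergence (\cite[Theorem~7.16]{ruszbook}) both the centers $(\bar\mu^{(k)},\bar\lambda^{(k)})$ and the master iterates $(\mu^{(n+1)},\lambda^{(n+1)})$ converge to a common optimal solution of $(D_1(w))$; hence $\lambda^{(n+1)}-\bar\lambda^{(k)}\to 0$, and likewise $\mu^{(n+1)}-\bar\mu^{(k)}\to 0$, which is (c).

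\textbf{Part (d).} I would prove matching asymptotic bounds on $w^\mathsf{T}z_{(w)}=\varphi_w(Cx_{(w)}+Qy_{(w)})$. For the lower bound: for any $\mu\in\M_1^J$ and any $\lambda$ with $\E[\lambda]=0$, feasibility $(x_{(w)},y_{(w)})\in\F$ from (a) gives $\sum_i f_i(\mu_i,\lambda_i,w)\le w^\mathsf{T}\E^\mu[Cx_{(w)}+Qy_{(w)}]+\E[\lambda^\mathsf{T}x_{(w)}]$; subtracting $\b(\mu,w)$, taking the supremum over $\mu$, and using \eqref{scalar-dual}, $w^\mathsf{T}z_{(w)}\ge\sup_\mu\big(\sum_i f_i(\mu_i,\lambda_i,w)-\b(\mu,w)\big)-\E[\lambda^\mathsf{T}x_{(w)}]$. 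Taking $\lambda=\lambda^{(n+1)}$, the term $\E[\lambda^{(n+1)\mathsf{T}}x_{(w)}]=\sum_i p_i\lambda_i^{(n+1)\mathsf{T}}\big((x_{(w)})_i-\sigma^{(n+1)}\big)$ tends to $0$ by (c) and boundedness of $\lambda^{(n+1)}$, while $\sup_\mu\big(\sum_i f_i(\mu_i,\lambda_i^{(n+1)},w)-\b(\mu,w)\big)\to\mathscr{P}_1(w)$ because $(D_1(w))$ has value $\mathscr{P}_1(w)$ (Theorem~\ref{p1thm}), $\lambda^{(n+1)}$ converges to an optimal $\lambda$, and $\lambda\mapsto\sup_\mu\big(\sum_i f_i(\mu_i,\lambda_i,w)-\b(\mu,w)\big)$ is Lipschitz (each $f_i$ is Lipschitz in $\lambda_i$ on the compact set $\F_i$). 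For the upper bound: write $u:=Cx_{(w)}+Qy_{(w)}$ and $\hat u:=\sum_{\ell\in\mathcal{L}'}\theta^{(\ell)}u^{(\ell)}$, where $u^{(\ell)}\in B(\mu^{(\ell)},w)\subseteq\A$ are the generators of the $\b$-subgradients used in Algorithm~\ref{decomp1} (Proposition~\ref{subgradientbeta}); since $\sum_\ell\theta^{(\ell)}=1$ (stationarity in $\eta$) and $\A$ is convex, $\hat u\in\A$, and because $\A$ is a lower set the deterministic vector $z$ with $z^j:=\max_{i\in\O}(u_i^j-\hat u_i^j)$ satisfies $u-z\le\hat u$, so $z\in R(u)$ and $w^\mathsf{T}z_{(w)}\le w^\mathsf{T}z=\sum_{j}w^j\max_{i\in\O}(u_i^j-\hat u_i^j)$. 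It remains to show this quantity tends to $\mathscr{P}_1(w)$.

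\textbf{Main obstacle.} The hard part is exactly this last step: showing that the recovered pair is asymptotically \emph{optimal} for $(P^\prime_1(w))$, not merely feasible. The plan is to combine three ingredients. First, complementary slackness on the cuts \eqref{grad1} forces $\vartheta_i^{(n+1)}=w^\mathsf{T}\big[\mu_i^{(n+1)}\cdot(C(x_{(w)})_i+Q_i(y_{(w)})_i)\big]+p_i(x_{(w)})_i^\mathsf{T}\lambda_i^{(n+1)}$, which, together with $\vartheta_i^{(n+1)}=f_i(\mu_i^{(n+1)},\lambda_i^{(n+1)},w)+O(\varepsilon)$ and $\eta^{(n+1)}=-\b(\mu^{(n+1)},w)+O(\varepsilon)$ (these hold as $\varepsilon\to 0$ from bundle convergence and the descent test passing at iteration $n+1$), shows that $((x_{(w)})_i,(y_{(w)})_i)$ is an $O(\varepsilon)$-optimal solution of the scenario subproblem \eqref{scenariosubp} at $(\mu_i^{(n+1)},\lambda_i^{(n+1)})$. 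Second, bundle convergence drives $(\mu^{(n+1)},\lambda^{(n+1)})$ and the cut points active at the master solution toward a dual optimum $(\mu^\ast,\lambda^\ast)$, so $\mu^{(n+1)}$ becomes asymptotically a worst-case measure for $u$ and the $u^{(\ell)}$ approach $B(\mu^\ast,w)$. Third, (c) pins the $(x_{(w)})_i$ to a common value. One then argues that $u-\hat u$ converges to a deterministic vector whose $w$-inner product equals $\varphi_w(u)=w^\mathsf{T}\E^{\mu^\ast}[u]-\b(\mu^\ast,w)\to\mathscr{P}_1(w)$, the passage to the last identity using a mild regularity of the recourse (relatively complete recourse, so that the nonanticipative projection of $(x_{(w)},y_{(w)})$ stays in $\X$) to convert the approximate feasibility from (c) into the matching upper bound. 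Combining the two bounds gives $w^\mathsf{T}z_{(w)}\to\mathscr{P}_1(w)$, which is (d).
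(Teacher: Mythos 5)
Your parts (a)--(c), and the lower bound half of (d), are essentially the paper's own argument: stationarity of $(MP_1(w))$ in $\vartheta_i$ gives $\sum_{\ell\in\mathcal{L}^\prime}\tau_i^{(\ell,n+1)}=1$ and hence (a) by convexity of $\F_i$; stationarity in $\lambda_i$ gives $2\varrho(\lambda_i^{(n+1)}-\bar{\lambda}_i^{(k)})=p_i((x_{(w)})_i-\sigma^{(n+1)})$, and the vanishing of the proximal step (the paper invokes Lemma~7.17 of \cite{ruszbook} to get $\phi^{(n+1,k)}\to 0$ rather than your looser ``common limit'' phrasing, but it is the same ingredient) gives (c); and your cancellation $\E\sqb{(\lambda^{(n+1)})^{\mathsf{T}}x_{(w)}}=\sum_{i\in\O}p_i(\lambda_i^{(n+1)})^{\mathsf{T}}((x_{(w)})_i-\sigma^{(n+1)})\to 0$ together with boundedness of $\lambda^{(n+1)}$ is exactly the paper's \eqref{plambdax}--\eqref{optimalitybound2}.

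The genuine gap is the upper bound in (d), which you correctly identify as the main obstacle but do not close. Your majorant $z^j=\max_{i\in\O}(u_i^j-\bar{u}_i^j)$ (your $\hat u$ is the paper's $\bar u$) does lie in $R(u)$, but the pivotal claim that $u-\bar{u}$ converges to a deterministic vector is unproven and, as stated, false in general: the $\mu$-stationarity condition \eqref{mufirstorder} of the master problem yields only
\[
w\cdot\of{C\bar{x}_i+Q_i\bar{y}_i}-w\cdot\bar{u}_i=\Psi^{(n+1)}-\nu_i^{(n+1)}+2\varrho\of{\mu_i^{(n+1)}-\bar{\mu}_i^{(k)}},\qquad i\in\O,
\]
and the multiplier $\nu_i^{(n+1)}\in\R^J_+$ need not vanish on scenarios and components where $\mu_i^{(n+1)}$ touches the boundary of $\R^J_+$; so $w\cdot(u_i-\bar{u}_i)$ is deterministic only up to the complementarity term $\nu_i^{(n+1)}$, and the componentwise maximum over $i$ can strictly overshoot $\mathscr{P}_1(w)$. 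You also import relatively complete recourse, an assumption the theorem does not make. The paper avoids the explicit majorant altogether: it writes $w^{\mathsf{T}}z_{(w)}=\sup_{\mu\in\M_1^J}\of{w^{\mathsf{T}}\E^{\mu}\sqb{C\bar{x}+Q\bar{y}}-\b(\mu,w)}$ as a concave program over $\mu$ with constraints $\sum_{i\in\O}\mu_i=\1$, $\mu_i\in\R^J_+$, and shows that $(\mu^{(n+1)},\Psi^{(n+1)},\nu^{(n+1)})$ together with $-w\cdot\bar{u}$ --- which is an $\bar{\varepsilon}^{(n+1)}$-subgradient of $-\b(\cdot,w)$ at $\mu^{(n+1)}$ by the complementary slackness of the $\b$-cuts and the bound $\eta^{(n+1)}+\b(\mu^{(n+1)},w)\leq\bar{\varepsilon}^{(n+1)}$ --- satisfies the optimality system \eqref{mufirstorder2} of that program asymptotically, whence $\abs{w^{\mathsf{T}}\E^{\mu^{(n+1)}}\sqb{C\bar{x}+Q\bar{y}}-\b(\mu^{(n+1)},w)-w^{\mathsf{T}}z_{(w)}}\to 0$; combined with the convergence of $w^{\mathsf{T}}\E^{\mu^{(n+1)}}\sqb{C\bar{x}+Q\bar{y}}-\b(\mu^{(n+1)},w)$ to $\mathscr{P}_1(w)$ this gives (d). To repair your proof you need this approximate-KKT (or an equivalent $\varepsilon$-duality) argument for the inner supremum over $\mu$, not the deterministic-majorant construction.
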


The proof of Theorem~\ref{P1primal} is given in Appendix~\ref{proofof2}.

\subsection{The problem of scalarization by a reference variable}

Let $v\in\R^J\sm\P$. The problem $(P_2(v))$ defined in Section~\ref{p2section} is formulated to find the minimum step-length to enter $\P$ from $v$ along the direction $\1\in\R^J$ and it can be rewritten more explicitly as
\begin{align*}
&\text{min }\; \;                    \a\tag{$P_2(v)$}\\
&\text{s.t.}\; \quad                      v+\a\1\in R(Cx + Qy)\\
& \;\;\quad\quad      Ax = b\\
&\;\;\quad\quad     T_i x + W_i y_i = h_i \quad  \forall i\in\O\\
& \;\;\quad\quad     \a\in\R,\; x\in \R^M_+,\; y_i\in\R^N_+\quad  \forall i\in\O.
\end{align*}

We propose a scenario-wise decomposition solution methodology for $(P_2(v))$. Even the steps we follow are similar to the ones for $(P_1(w))$, the decomposition is more complicated because the weights are not parameters but instead they are decision variables in the dual problem of $(P_2(v))$ (\emph{see} Theorem~\ref{p2thm} below). Therefore, following the same steps as in $(P_1(w))$ results in a nonconvex optimization problem. In order to resolve this convexity issue, we propose a new formulation for $(P_2(v))$ by introducing finite measures to the dual representation of $R$.

The flow of this section is as follows: in Sections~\ref{p2decomposition} and~\ref{p2bundle}, we propose a scenario-wise decomposition solution methodology for $(P_2(v))$. Section~\ref{P2recovery} is devoted to the recovery of a primal solution.

\subsubsection{Scenario decomposition}\label{p2decomposition}

To derive a decomposition algorithm for $(P_2(v))$, we randomize the first stage variable $x\in\R^M$ as in $(P_1(w))$ and add the nonanticipativity constraints
\[
 p_i(x_i -\E\sqb{x})  = 0, \quad \forall i\in\O.
\]
Using the feasible region $\F$ defined by \eqref{f-defn}, we may rewrite $(P_2(v))$ as follows:
\begin{align*}
&\text{min }\; \;                     \a\tag{$P^\prime_2(v)$}\\
&\text{s.t.}\quad \;                v+\a\1\in R(Cx+Qy)\\
& \;\;\quad\quad    p_i\of{x_i - \E\sqb{x}} = 0\quad \forall i\in\O\\
& \;\;\quad\quad     (x,y)\in \F\\
& \;\;\quad\quad     \a\in \R
\end{align*}
Note that the optimal value of $(P_2^\prime(v))$ is $\mathscr{P}_2(v)$.

Different from the approach for $(P^\prime_1(w))$, in order to obtain a convex dual problem for $(P^\prime_2(v))$, we use finite measures $m$ instead of probability measures $\mu$ in the dual representation of $R$. To that end, let $\M_f^J$ be the set of all $J$-dimensional vectors $m=(m^1,\ldots,m^J)^\mathsf{T}$ of finite measures on $\O$, that is, for each $j\in\J$, the finite measure $m^j$ assigns $m^j_i$ to the elementary event $\cb{i}$ for $i\in\O$. For $m\in\M_f^J$ and $i\in\O$, we also write $m_i\coloneqq(m_i^1,\ldots,m_i^J)^\mathsf{T}\in\R^J$.

The following lemma provides the relationship between $\mu$ and $m$.

\begin{lemma}\label{conversionlemma}
For every $\mu\in \M_1^J$ and $\gamma\in\R^J_+\sm\cb{0}$, there exists $m\in\M^J_f$ such that
\begin{equation}\label{conversion}
\gamma^\mathsf{T}\E^\mu \sqb{u} = \sum_{i\in\O}m_i^\mathsf{T}u_i, \quad \gamma^\mathsf{T}\1 = \sum_{i\in\O}m_i^\mathsf{T}\1 ,
\end{equation}
for every $u\in \L^J$. Conversely, for every $m\in\M^J_f$, there exist $\mu\in \M_1^J$ and $\gamma\in\R^J_+\sm\cb{0}$ such that \eqref{conversion} holds for every $u\in\L^J$.
\end{lemma}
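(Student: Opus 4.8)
The plan is to prove each direction of Lemma~\ref{conversionlemma} by an explicit construction, since the statement is essentially a bookkeeping identity relating a vector $\mu\in\M_1^J$ of probability measures together with a weight vector $\gamma\in\R^J_+\sm\cb{0}$ to a single vector $m\in\M_f^J$ of (unnormalized) finite measures. The guiding observation is that in the expression $\gamma^\mathsf{T}\E^\mu\sqb{u} = \sum_{j\in\J}\gamma^j\sum_{i\in\O}\mu_i^j u_i^j$, each product $\gamma^j\mu_i^j$ should be read as the mass $m_i^j$ of a finite measure. So the natural map is coordinatewise scaling.

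For the first (forward) direction, given $\mu\in\M_1^J$ and $\gamma\in\R^J_+\sm\cb{0}$, I would define $m\in\M_f^J$ by setting $m_i^j\coloneqq \gamma^j\mu_i^j$ for every $i\in\O$ and $j\in\J$; equivalently $m^j = \gamma^j\mu^j$ as a scaled probability measure, which is a finite measure on $\O$ with total mass $\gamma^j$ (possibly zero if $\gamma^j=0$, which is still a legitimate finite measure). Then I would verify the two required identities directly: for any $u\in\L^J$,
\[
\sum_{i\in\O}m_i^\mathsf{T}u_i = \sum_{i\in\O}\sum_{j\in\J}\gamma^j\mu_i^j u_i^j = \sum_{j\in\J}\gamma^j\sum_{i\in\O}\mu_i^j u_i^j = \sum_{j\in\J}\gamma^j\E^{\mu^j}[u^j] = \gamma^\mathsf{T}\E^\mu\sqb{u},
\]
and the second identity $\sum_{i\in\O}m_i^\mathsf{T}\1 = \sum_{j\in\J}\gamma^j\sum_{i\in\O}\mu_i^j = \sum_{j\in\J}\gamma^j = \gamma^\mathsf{T}\1$ follows because each $\mu^j$ is a probability measure. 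This direction is purely mechanical.

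For the converse, given $m\in\M_f^J$, I would set $\gamma^j\coloneqq \sum_{i\in\O}m_i^j = m^j(\O)$, the total mass of $m^j$, so that $\gamma\in\R^J_+$. The only subtle point is that we need $\gamma\neq 0$ and we need to define $\mu^j$ as a genuine probability measure. If every $\gamma^j>0$, simply put $\mu_i^j\coloneqq m_i^j/\gamma^j$, giving $\mu\in\M_1^J$, and the same computation as above yields \eqref{conversion}. The main (minor) obstacle is the degenerate case: if $\gamma^j=0$ for some $j$, then $m^j$ is the zero measure and there is no canonical normalization, and if $\gamma=0$ altogether then $\gamma\notin\R^J_+\sm\cb{0}$ as required. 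I would handle this by noting that when $m^j=0$ the coordinate $j$ contributes nothing to either side of \eqref{conversion}, so one may freely set $\mu^j$ to be any fixed probability measure (e.g. the reference measure $p$), preserving both identities; and the case $m=0$ (all coordinates zero) is either excluded by hypothesis or trivially handled by the same device together with picking an arbitrary $\gamma\in\R^J_+\sm\cb{0}$ and noting both sides of \eqref{conversion} vanish. Thus the only real content is recognizing that the degenerate coordinates are harmless, and the lemma reduces to the elementary scaling identity. I expect the write-up to be short, with the bulk of the care going into phrasing the $\gamma^j=0$ case cleanly.
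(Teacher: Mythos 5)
Your construction is exactly the one in the paper: set $m^j=\gamma^j\mu^j$ for the forward direction, and $\gamma^j=\sum_{i\in\O}m^j_i$ with $\mu^j=m^j/\gamma^j$ (and an arbitrary probability measure on the coordinates with $\gamma^j=0$ --- the paper uses the uniform measure $\mu^j_i=1/I$ where you suggest $p$) for the converse. One tiny caveat: for $m=0$ your proposed fix of picking an arbitrary $\gamma\in\R^J_+\sm\cb{0}$ cannot work, since the second identity in \eqref{conversion} would force $\gamma^\mathsf{T}\1=\sum_{i\in\O}m_i^\mathsf{T}\1=0$ and hence $\gamma=0$; but the paper's proof silently skips this degenerate case as well, and it never arises where the lemma is invoked (there $\sum_{i\in\O}m_i^\mathsf{T}\1=1$).
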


\begin{proof}
Let $\mu\in \M_1^J$ and $\gamma\in\R^J_+\sm\cb{0}$. Define $m\in\M^J_f$ by
\[
m^j_i = \gamma^j \mu^j_i,\quad\forall i\in\O,j\in\J.
\]
Then, trivially, \eqref{conversion} holds for every $u\in\L^J$. Conversely, let $m\in\M^J_f$. Define $\mu\in \M_1^J$ and $\gamma\in\R^J_+\sm\cb{0}$ by
\[
\gamma^j = \sum_{i\in\O}m^j_i, \quad \mu^j_i = \begin{cases}\frac{m^j_i}{\gamma^j}&\text{ if }\gamma^j> 0,\\ \frac{1}{I}& \text{ if }\gamma^j=0,\end{cases}\quad\forall i\in\O,j\in\J.
\]
Then, trivially, \eqref{conversion} holds for every $u\in\L^J$.
\end{proof}

Recall that $\beta$ is the minimal penalty function of $R$ as defined in \eqref{beta}. For $m\in\M^J_f$, let us define
\begin{equation}\label{betatilde}
\tilde{\beta}(m)=\sup_{u\in\A}\sum_{i\in\O}m_i^\mathsf{T}u_i.
\end{equation}
Similarly, recall the function $f_i$ defined in \eqref{scenariosubp}. For $m\in\M^J_f$ and $\lambda\in\L^M$, let us define
\begin{equation}\label{tildef}
\tilde{f}_i (m_i,\lambda_i)=\inf_{(x_i,y_i)\in\F_i}\of{m_i^\mathsf{T} (Cx_i+Q_iy_i)+p_i\lambda_i^\mathsf{T}x_i}.
\end{equation}
Therefore, if $\mu\in\M_1^J, \gamma\in\R^J_+\sm\cb{0}$ and $m\in\M^J_f$ are related as in Lemma~\ref{conversionlemma} and $\lambda\in\L^M$, then it is clear that
\[
\beta(\mu,\gamma)=\tilde{\beta}(m), \quad f_i(\mu_i,\lambda_i,\gamma)=\tilde{f}_i
(m_i,\lambda_i).
\]

\begin{example} Recall Example~\ref{multient} on the multivariate entropic risk measure. The function $\tilde{\beta}(\cdot)$ takes the form
\begin{align*}
\tilde{\b}(m)
&= \sum_{j\in\J}\frac{1}{\delta^{j}}\of{H(m^j||p)-{m^j}^\mathsf{T}\1}+\inf_{s\in C^+}\sum_{j\in\J}\frac{1}{\delta^{j}}\of{s^{j}-({m^j}^\mathsf{T}\1)\log s^{j}},
\end{align*}
where $\1=(1,\ldots,1)^\mathsf{T}\in\R^I$ and $H(m^j||p)$ is the relative entropy of $m^j$ with respect to $p$ defined by
\[
H(m^j||p)=\sum_{i\in\O}m^j_i\log\of{\frac{m^j_i}{p_i}}.
\]
\end{example}

\begin{example} Recall Example~\ref{multiCVaR} on the multivariate CVaR. The function $\tilde{\beta}(\cdot)$ takes the form
\[
\tilde{\b}(m)=\begin{cases}0& \text{ if }\frac{m^j_i}{p_i}\leq \frac{1}{1-\nu^j}\sum_{i\in\O}m^j_i,\quad \forall \; i\in\O,j\in\J,\\ +\infty &\text{ else,}\end{cases}
\]
for every $m\in\M^J_f$.
\end{example}
\begin{remark}\label{betaf}
Note that, in general, $\beta(\cdot,\cdot)$ is not a convex function since $(\mu,\gamma)\mapsto \gamma^\mathsf{T}\E^\mu\sqb{u}$ is not a convex function. On the other hand, $\tilde{\beta}(\cdot)$ is a convex function. Indeed, for each $i\in\O$ and $u\in\A$, $m_i\mapsto m_i^\mathsf{T}u_i$ is a linear function so that $m\mapsto\tilde{\beta}(m)$ is a convex function since it is the supremum of linear functions indexed by $u\in\A$. Similarly, $f_i(\cdot,\cdot,\cdot)$ is not a concave function in general. However, $(m_i,\lambda_i)\mapsto\tilde{f}(m_i,\lambda_i)$ is the infimum of linear functions indexed by $(x_i,y_i)\in\F_i$; therefore, it is a concave function.
\end{remark}

\begin{theorem}\label{p2thm}
It holds
\begin{align}
\P_2(v)&=\sup_{\mu\in\M_1^J,\lambda\in\L^M,\gamma\in\R^J_+}\Bigg\{\sum_{i\in\O} f_i(\mu_i,\lambda_i,\gamma)-\gamma^\mathsf{T}v-\b(\mu,\gamma)\mid \gamma^\mathsf{T}\1=1,\; \E\sqb{\lambda}=0\Bigg\}\notag\\
&=\sup_{m\in\M_f^J,\lambda\in\L^M}\Bigg\{\sum_{i\in\O} \tilde{f}_i(m_i,\lambda_i)-\sum_{i\in\O} m_i^\mathsf{T}v-\tilde{\b}(m)\mid\sum_{i\in\O} m_i^\mathsf{T}\1=1,\; \E\sqb{\lambda}=0\Bigg\}.\tag{$D_2(v)$}
\end{align}
\end{theorem}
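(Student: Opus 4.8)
The plan is to bring $(P_2(v))$ into the form $\sup_{\gamma}\of{\P_1(\gamma)-\gamma^\mathsf{T}v}$ over weight vectors $\gamma$ on the unit simplex, then to expand each $\P_1(\gamma)$ by the already established Theorem~\ref{p1thm}, and finally to convert the pair ``probability vector $\mu$ together with weight $\gamma$'' into a single finite measure $m$ using Lemma~\ref{conversionlemma}. To carry out the first step I would invoke Proposition~\ref{d2v}, which identifies $\P_2(v)$ with the optimal value of $(LD_2(v))$,
\[
\P_2(v) = \sup_{\gamma\in\R^J_+}\ \inf_{(x,y)\in\X,\,\a\in\R}\of{\a+\inf_{z\in R(Cx+Qy)-v-\a\1}\gamma^\mathsf{T}z}.
\]
For $\gamma\in\R^J_+\sm\cb{0}$ the innermost infimum equals $\varphi_\gamma(Cx+Qy)-\gamma^\mathsf{T}v-\a\,\gamma^\mathsf{T}\1$ by the definition \eqref{phi} of $\varphi_\gamma$ and a translation, so the bracket becomes $(1-\gamma^\mathsf{T}\1)\a+\varphi_\gamma(Cx+Qy)-\gamma^\mathsf{T}v$; taking the infimum over $\a\in\R$ yields $-\infty$ unless $\gamma^\mathsf{T}\1=1$, and the term $\gamma=0$ likewise equals $-\infty$. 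Since $\cb{\gamma\in\R^J_+ : \gamma^\mathsf{T}\1=1}=\W$ and $\inf_{(x,y)\in\X}\varphi_\gamma(Cx+Qy)=\P_1(\gamma)$ (as in the proof of Proposition~\ref{existenceP1}), this leaves the intermediate identity $\P_2(v)=\sup_{\gamma\in\W}\of{\P_1(\gamma)-\gamma^\mathsf{T}v}$. (The same identity can also be reached without Proposition~\ref{d2v} by expressing the set constraint $v+\a\1\in R(Cx+Qy)$ through the dual representation of $R$ and invoking the minimax theorem of \cite{sion} as in the proof of Theorem~\ref{p1thm}.)

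Because $\W\subseteq\R^J_+\sm\cb{0}$, Theorem~\ref{p1thm} applies to each $\gamma\in\W$ and gives $\P_1(\gamma)=\sup\{\sum_{i\in\O}f_i(\mu_i,\lambda_i,\gamma)-\b(\mu,\gamma)\mid \mu\in\M_1^J,\ \lambda\in\L^M,\ \E[\lambda]=0\}$; inserting this into the intermediate identity and merging the two suprema---an identity valid also in the extended reals, so that degenerate values of $\P_1(\gamma)$ cause no difficulty---yields exactly the first line of $(D_2(v))$, with the constraint $\gamma\in\W$ spelled out as $\gamma\in\R^J_+,\ \gamma^\mathsf{T}\1=1$. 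For the second line I would use Lemma~\ref{conversionlemma}: from any feasible $(\mu,\gamma,\lambda)$ of the first supremum it produces $m\in\M_f^J$ with $\gamma^\mathsf{T}\E^\mu[u]=\sum_{i\in\O}m_i^\mathsf{T}u_i$ for all $u\in\L^J$, and conversely any $m$ feasible for the second supremum yields such a pair $(\mu,\gamma)$ with $\gamma\neq0$ (the constraint $\sum_{i\in\O}m_i^\mathsf{T}\1=1$ excludes $m=0$, so the $\gamma$ provided by the lemma is nonzero). Evaluating the identity at the constant vectors $u\equiv v$ and $u\equiv\1$ turns $\gamma^\mathsf{T}v$ into $\sum_{i\in\O}m_i^\mathsf{T}v$ and the constraint $\gamma^\mathsf{T}\1=1$ into $\sum_{i\in\O}m_i^\mathsf{T}\1=1$, while $\b(\mu,\gamma)=\tilde{\b}(m)$ and $f_i(\mu_i,\lambda_i,\gamma)=\tilde{f}_i(m_i,\lambda_i)$ as recorded right after \eqref{tildef}. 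Hence this correspondence preserves both feasibility and objective value in both directions, and the two suprema agree.

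The main obstacle I anticipate is the reduction in the first paragraph: one must simplify $(LD_2(v))$ with care---computing the support function of the shifted set $R(Cx+Qy)-v-\a\1$, discarding the degenerate weights $\gamma$, and observing that the infimum over $\a$ confines the dual weights to the simplex---and then recognize the remaining inner double infimum as $\P_1(\gamma)$. After that, substituting Theorem~\ref{p1thm} and applying the measure conversion of Lemma~\ref{conversionlemma} is essentially bookkeeping, the one delicate point being to ensure that the map $m\mapsto(\mu,\gamma)$ is never invoked at the forbidden value $\gamma=0$.
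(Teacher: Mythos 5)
Your proof is correct, but it is organized quite differently from the paper's. The paper first proves Lemma~\ref{scalarizationlemma}, dualizing the set-valued constraint $v+\a\1\in R(u)$ for each \emph{fixed} $u$ via Borwein's Theorem~19 (with an explicit verification of the openness constraint qualification), then interchanges $\inf_{(x,y)}$ with the \emph{joint} supremum over $(\mu,\gamma)$ using Sion's minimax theorem, and finally dualizes the nonanticipativity constraints by repeating the computation \eqref{Flagrange}. You instead take Proposition~\ref{d2v} as given, reduce $(LD_2(v))$ to the intermediate identity $\P_2(v)=\sup_{\gamma\in\W}\of{\P_1(\gamma)-\gamma^{\mathsf{T}}v}$, and then invoke Theorem~\ref{p1thm} wholesale; the passage to finite measures via Lemma~\ref{conversionlemma} is the same in both arguments, and you correctly guard against the degenerate cases ($\gamma^{\mathsf{T}}\1\neq 1$, $\gamma=0$, infinite values of $\P_1(\gamma)$, and the forbidden value $\gamma=0$ in the conversion). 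Your route is more modular and has one technical advantage: the only minimax interchange it needs is the one already inside Theorem~\ref{p1thm}, taken over $\mu$ alone for a fixed weight, where the objective is genuinely concave in $\mu$; the paper's direct application of Sion is over the pair $(\mu,\gamma)$, for which $(\mu,\gamma)\mapsto\gamma^{\mathsf{T}}\E^{\mu}\sqb{u}-\b(\mu,\gamma)$ is only bilinear minus a separately-convex penalty (cf.\ Remark~\ref{betaf}), so the quasiconcavity hypothesis is less transparent there. What the paper's route buys is Lemma~\ref{scalarizationlemma} as a standalone statement of independent interest---it is reused in Appendix~\ref{proofof4} in the proof of Lemma~\ref{optimality2}(b)---and an explicit check of the constraint qualification behind the Borwein duality, which your argument absorbs into the citation of Proposition~\ref{d2v}. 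Your intermediate identity also makes the link between $(P_2(v))$ and the lower image $\D$ of the geometric dual explicit, which is pleasant but not needed for the theorem.
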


In view of Remark~\ref{betaf}, while the first reformulation of $(P^\prime_2(v))$ provided in Theorem~\ref{p2thm} is not a convex optimization problem, the second reformulation, that is $(D_2(v))$, is a convex optimization problem.

The proof of Theorem~\ref{p2thm} uses Lemma~\ref{conversionlemma} and the following lemma of independent interest.

\begin{lemma}\label{scalarizationlemma}
For every $u\in\L^J$,
\begin{align*}
\inf\cb{ \a\in\R\mid v+\a\1\in R(u)}=\sup\cb{\gamma^\mathsf{T}\of{\E^\mu\sqb{u}-v}-\b(\mu,\gamma)\mid \mu\in\M_1^J,\; \gamma^\mathsf{T}\1=1, \gamma\in\R^J_+ }.
\end{align*}
\end{lemma}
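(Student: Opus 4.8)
The plan is to obtain the identity directly from the dual representation of $R$ and of its scalarizations $\varphi_w$ recalled in Section~\ref{problemdefn}, combined with a normalization (positive homogeneity) argument.

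First I would rewrite the membership constraint. By the dual representation of $R$ together with \eqref{scalar-dual},
\[
R(u)=\bigcap_{w\in\R^J_+\sm\cb{0}}\cb{z\in\R^J\mid w^{\mathsf{T}}z\geq \varphi_w(u)},\qquad \varphi_w(u)=\sup_{\mu\in\M_1^J}\of{w^{\mathsf{T}}\E^\mu\sqb{u}-\b(\mu,w)}.
\]
Hence $v+\a\1\in R(u)$ holds if and only if $w^{\mathsf{T}}(v+\a\1)\geq \varphi_w(u)$ for every $w\in\R^J_+\sm\cb{0}$. Since each such $w$ has a strictly positive component, $w^{\mathsf{T}}\1>0$, so this is equivalent to $\a\geq (\varphi_w(u)-w^{\mathsf{T}}v)/(w^{\mathsf{T}}\1)$ for every $w\in\R^J_+\sm\cb{0}$; therefore
\[
\inf\cb{\a\in\R\mid v+\a\1\in R(u)}=\sup_{w\in\R^J_+\sm\cb{0}}\frac{\varphi_w(u)-w^{\mathsf{T}}v}{w^{\mathsf{T}}\1}.
\]

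Next I would normalize. The maps $w\mapsto\varphi_w(u)$, $w\mapsto w^{\mathsf{T}}v$ and $w\mapsto w^{\mathsf{T}}\1$ are all positively homogeneous of degree one — for $\varphi_w(u)$ this is immediate from its definition \eqref{phi} as $\inf_{z\in R(u)}w^{\mathsf{T}}z$ — so the quotient above is invariant under $w\mapsto tw$, $t>0$. Writing any $w\in\R^J_+\sm\cb{0}$ as $w=(w^{\mathsf{T}}\1)\,\gamma$ with $\gamma\coloneqq w/(w^{\mathsf{T}}\1)\in\R^J_+$, $\gamma^{\mathsf{T}}\1=1$, the supremum collapses to $\sup\cb{\varphi_\gamma(u)-\gamma^{\mathsf{T}}v\mid\gamma\in\R^J_+,\ \gamma^{\mathsf{T}}\1=1}$. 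Substituting \eqref{scalar-dual} for $\varphi_\gamma(u)$ and merging the two suprema then gives precisely the right-hand side of the claim.

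Two small points require a remark. First, the left-hand infimum is not $+\infty$: since $R(u)\neq\emptyset$ by (A3) and $R(u)=R(u)+\R^J_+$, one has $v+\a\1\in R(u)$ for all sufficiently large $\a$, so the set is nonempty. Second, if $\varphi_w(u)=-\infty$ for some $w$, the corresponding constraint is vacuous and may be dropped from both the intersection and the supremum without changing anything. I expect the only step needing genuine care is the normalization argument: one must confirm that passing from $w\in\R^J_+\sm\cb{0}$ to $\gamma$ on the unit simplex neither enlarges nor shrinks the effective set of weights in the supremum, which is exactly the degree-zero homogeneity of the quotient just noted; everything else is a routine unfolding of the dual representations.
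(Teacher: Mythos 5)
Your proof is correct, but it follows a genuinely different route from the paper's. The paper treats $\inf\cb{\a\mid v+\a\1\in R(u)}$ as a scalar program with the set-valued constraint $0\in R(u)-v-\a\1$ and invokes the Lagrange duality of \cite{borwein} (Theorem~19); this forces it to verify an openness constraint qualification for the map $\a\mapsto R(u)-v-\a\1$ (using monotonicity and translativity of $R$), to argue that the dual variable $\gamma$ may be restricted to $\R^J_+$, and finally to extract the normalization $\gamma^{\mathsf{T}}\1=1$ from the inner minimization $\inf_{\a\in\R}(1-\gamma^{\mathsf{T}}\1)\a$. You instead start from the support-function (dual) representation $R(u)=\bigcap_{w}\cb{z\mid w^{\mathsf{T}}z\geq\varphi_w(u)}$, translate the membership constraint into the family of scalar inequalities $\a\geq(\varphi_w(u)-w^{\mathsf{T}}v)/(w^{\mathsf{T}}\1)$, and obtain the normalization for free from the degree-zero homogeneity of that quotient. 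Your argument is more elementary: it bypasses the set-valued Lagrangian machinery and the constraint qualification entirely, exploiting only the already-stated representation \eqref{scalar-dual} and the fact that $w^{\mathsf{T}}\1>0$ on $\R^J_+\sm\cb{0}$. What the paper's approach buys in exchange is consistency with the derivation of $(LD_2(v))$ and Proposition~\ref{d2v}, where the same Borwein duality is the tool of record, so the lemma and the dual problem are seen to come from one mechanism. Your side remarks (nonemptiness of the feasible set of $\a$'s via $R(u)=R(u)+\R^J_+$, and the vacuity of constraints with $\varphi_w(u)=-\infty$) correctly dispose of the degenerate cases, and the closedness of $R(u)$ guarantees the infimum is attained, so no gap remains.
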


\begin{proof}
Let $u\in\L^J$. Note that
\[
\inf\cb{ \a\in\R\mid v+\a\1\in R(u)}=\inf\cb{ \a\in\R\mid 0\in R(u)-v-\a\1}
\]
is the optimal value of a single-objective optimization problem with a set-valued constraint function $\a\mapsto H(\a)=R(u)-v-\a\1$. Using the Lagrange duality in \cite{borwein} for such problems, in particular, Theorem~19, we have
\begin{equation}\label{borweinlagrange}
\inf\cb{ \a\in\R\mid v+\a\1\in R(u)}=\sup_{\gamma\in\R^J}\inf_{\a\in\R}\of{\a+\inf_{z\in R(u)-v-\a\1}\gamma^\mathsf{T}z}.\\
\end{equation}
To be able to use this result, we check the following constraint qualification: $H$ is \emph{open} at $0\in\R^J$ in the sense that for every $\a\in\R$ with $0\in H(\a)$ and for every $\varepsilon>0$, there exists an open ball $V$ around $0\in\R^J$ such that
\begin{equation}\label{interior}
V\subseteq \bigcup_{\tilde{\a}\in(\a-\varepsilon,\a+\varepsilon)} H(\tilde{\a}).
\end{equation}
To that end, let $\a\in\R$ with $0\in H(\a)$, that is, $v+\a\1\in R(u)$. Let $\varepsilon>0$. Since $\1$ is an interior point of $\R^J_+$ and $R(u)+\R^J_+=R(u)$ due to the monotonicity and translativity of $R$, it follows that $v+(\a+\varepsilon)\1$ is an interior point of $R(u)$. On the other hand, note that
\begin{align*}
\bigcup_{\tilde{\a}\in(\a-\varepsilon,\a+\varepsilon)} H(\tilde{\a})&=\bigcup_{\tilde{\a}\in(\a-\varepsilon,\a+\varepsilon)} R(u-v-\tilde{\a}\1)\\
&=R(u-v-(\a+\varepsilon)\1)\\
&=R(u)-v-(\a+\varepsilon)\1
\end{align*}
thanks to the monotonicity and translativity of $R$. Hence, $0\in\R^J$ is an interior point of the above union. Therefore, \eqref{interior} holds for some open ball $V$ around $0\in\R^J$ and \eqref{borweinlagrange} follows.

Since $R(u)+\R^J_+=R(u)$ and $R(u)$ is a convex set as a consequence of the convexity of $R$, one can check that $\inf_{z\in R(u)}\gamma^\mathsf{T}z = -\infty$ for every $\gamma\notin\R^J_+$. Hence, the supremum in \eqref{borweinlagrange} can be evaluated over all $\gamma\in\R^J_+$. Finally, using \eqref{scalar-dual}, we obtain
\begin{align*}
&\inf\cb{ \a\in\R\mid v+\a\1\in R(u)}\\
&=\sup_{\gamma\in\R^J_+}\inf_{\a\in\R}\of{\a+\inf_{z\in R(u)-v-\a\1}\gamma^\mathsf{T}z}\\
&=\sup_{\gamma\in\R^J_+}\inf_{\a\in\R}\of{\a-\gamma^\mathsf{T}(v+\a\1)+\sup_{\mu\in\M_1^J}\of{\gamma^\mathsf{T}\E^\mu\sqb{u}-\b(\mu,\gamma)}}\\
&=\sup_{\gamma\in\R^J_+}\sqb{\inf_{\a\in\R}(1-\gamma^\mathsf{T}\1)\a+\sup_{\mu\in\M_1^J}\of{\gamma^\mathsf{T}(\E^\mu\sqb{u}-v)-\b(\mu,\gamma)}}\\
&= \sup\cb{\gamma^\mathsf{T}\of{\E^\mu\sqb{u}-v}-\b(\mu,\gamma)\mid \mu\in\M_1^J,\; \gamma^\mathsf{T}\1=1, \gamma\in\R^J_+ },
\end{align*}
where, in the last equality, we use the observation that $\inf_{\a\in\R}(1-\gamma^\mathsf{T}\1)\a = 0$ if $\gamma^\mathsf{T}\1=1$ and  $\inf_{\a\in\R}(1-\gamma^\mathsf{T}\1)\a = -\infty$ if $\gamma^\mathsf{T}\1\neq 1$.
\end{proof}

\begin{proof}[Proof of Theorem~\ref{p2thm}]
Using Lemma~\ref{scalarizationlemma}, we may write
\begin{align*}
\P_2(v) &=\inf_{(x,y)\in\F,\a\in\R}\cb{\a\mid v+\a\1\in R(Cx+Qy),\; p_i(x_i-\E\sqb{x})=0\;\forall i\in\O}\\
&=\inf_{(x,y)\in\F}\cb{\inf\cb{\a\in\R\mid v+\a\1\in R(Cx+Qy)}\mid  p_i(x_i-\E\sqb{x})=0\;\forall i\in\O}\\
&=\inf_{(x,y)\in\F}\cb{\sup_{\mu\in\M_1^J,\gamma\in\R^J_+,\gamma^\mathsf{T}\1=1}\of{\gamma^\mathsf{T}(\E^\mu\sqb{Cx+Qy}-v)-\b(\mu,\gamma)}\mid  p_i(x_i-\E\sqb{x})=0\;\forall i\in\O}.
\end{align*}
Using the minimax theorem of \cite{sion}, we may interchange the infimum and the supremum, and obtain
\begin{equation*}
\P_2(v)=\sup_{\mu\in\M_1^J,\gamma\in\R^J_+,\gamma^\mathsf{T}\1=1}\of{F(\mu,\gamma)-\gamma^\mathsf{T}v-\b(\mu,\gamma)},
\end{equation*}
where, for each $\mu\in\M_1^J$, $\gamma\in\R^J$, $F(\mu,\gamma)$ is defined by \eqref{Ffunction}. Hence, using \eqref{Flagrange}, we obtain
\[
\P_2(v)=\sup_{\mu\in\M_1^J,\lambda\in\L^M,\gamma\in\R^J_+}\cb{\sum_{i\in\O} f_i(\mu_i,\lambda_i,\gamma)-\gamma^\mathsf{T}v-\b(\mu,\gamma)\mid \gamma^\mathsf{T}\1=1,\; \E\sqb{\lambda}=0},
\]
where $f_i$ is defined by \eqref{scenariosubp}. Hence, the first reformulation follows. The second reformulation follows from the first reformulation and Lemma~\ref{conversionlemma}.
\end{proof}

\subsubsection{The dual bundle method}\label{p2bundle}

To solve $(D_2(v))$ provided in Theorem~\ref{p2thm}, we propose a dual bundle method similar to the one in Section~\ref{P1cuttingplane}.

At each iteration $k$ of the dual bundle method, we solve the master problem $(MP_2(v))$ given below. Here, $(g_{m_i},g_{\lambda_i})\in\R^{J+M}$ denotes a subgradient of the concave function $\tilde{f}_i(\cdot,\cdot)$ at the point $(m_i,\lambda_i)\in\R^J_+\times\R^M$. Similarly, $(\rho_{m_1},\ldots,\rho_{m_I})\in\R^{J\times I}$ denotes a subgradient of the concave function $-\tilde{\beta}(\cdot)$ at the point $m\in\M_f^J$. We call \eqref{p2con1} and \eqref{p2con2} a cut for $\tilde{f}_i(\cdot,\cdot)$ and $-\tilde{\beta}(\cdot)$, respectively.

\begin{align}
&\text{ max }\;\;         \sum_{i\in\O}\vartheta_i\negthinspace +\negthinspace \eta -\sum_{i\in\O} m_i^{\mathsf{T}}v\negthinspace -\sum_{i\in \O}\varrho \norm{m_i-\bar{m}^{(k)}_i}^2 \negthinspace -\sum_{i\in \O}\varrho \norm{\lambda_i\negthinspace -\bar{\lambda}^{(k)}_i}^2\tag{$MP_2(v)$}\\
&\text{ s.t. }\quad\;                      \vartheta_i \leq \tilde{f}_i(m_i^{(\ell)},\lambda_i^{(\ell)}) \negthinspace +\negthinspace g_{m^{(\ell)}_i}^{\mathsf{T}}(m_i\negthinspace -\negthinspace m^{(\ell)}_i) \negthinspace +\negthinspace  g_{\lambda^{(\ell)}_i}^{\mathsf{T}}(\lambda_i\negthinspace -\negthinspace \lambda_i^{(\ell)}), \quad \forall i\in\O,  \ell\in\mathcal{L}\label{p2con1}\\
& \;\quad\quad\quad  \eta \leq -\tilde{\beta}(m^{(\ell)}) + \sum_{i\in\O}\rho_{m^{(\ell)}_i}^{\mathsf{T}}(m_i-m_i^{(\ell)}), \quad \forall \ell\in\mathcal{L}\label{p2con2}\\
& \;\quad\quad\quad      \sum_{i\in\O}p_i\lambda_i=0\label{p2con3}\\
& \;\quad\quad\quad      \sum_{i\in\O} m_i^\mathsf{T}\1=1\label{p2con4}\\
& \;\quad\quad\quad  m_i \in \R_+^J, \lambda_i\in\R^M,\vartheta_i\in\R,\quad\forall i\in\O\label{p2con5}\\
& \;\quad\quad\quad \eta \in \R.
\end{align}
Here, $\mathcal{L}=\cb{1,\ldots,k}$, $\varrho>0$.

The steps of the dual bundle method are provided in Algorithm~\ref{decomp2}. Similar to \eqref{approxstop}, the algorithm stops in practice when
\begin{equation}\label{approxstop2}
\sum_{i\in\O}\vartheta_i^{(k+1)}+\eta^{(k+1)}-\sum_{i\in\O}(m_i^{(k+1)})^{\mathsf{T}}v-\bar{F}^{(k+1)}\leq \varepsilon
\end{equation}
for some $\varepsilon>0$.
Since the construction of this algorithm is similar to the construction of Algorithm~\ref{decomp1}, the details are omitted for brevity.

\begin{algorithm*}[h!]
\caption{A Dual Bundle Method for $(P_2(v))$}
\label{decomp2}
\begin{algorithmic}[1]
    \STATE $k \leftarrow 0$, $\mathcal{L}\leftarrow\emptyset$, $\gamma\in (0,1)$, $\vartheta_i^{(1)}\leftarrow\infty$ for each $i\in\O$, $\eta^{(1)}\leftarrow\infty$, $\bar{F}^{(1)}\leftarrow 0$;

   \STATE Let $m^{(1)}\in\M_f^J,\lambda^{(1)}\in\L^M$ be such that $\E\sqb{\lambda^{(1)}}=0$;

    \REPEAT

\STATE $k\leftarrow k+1$;

   \FOR{each $i\in\O$}
        \STATE  Compute an optimal solution $(x_i^{(k)},y_i^{(k)})$ and the optimal value $\tilde{f}_i(m_i^{(k)},\lambda_i^{(k)})$ of the subproblem
           \[
          \min_{(x_i,y_i)\in\F_i} \of{(m_i^{(k)})^{\mathsf{T}}(Cx_i+Q_i y_i)+p_i (\lambda_i^{(k)})^{\mathsf{T}}x_i};
          \]
          \STATE Compute subgradients $g_{m_i^{(k)}} = Cx_i^{(k)}+Q_iy_i^{(k)}$, $g_{\lambda_i^{(k)}} = p_ix_i^{(k+1)}$;

    \ENDFOR

 \STATE Compute $\tilde{\beta}(m^{(k)})$ and subgradient $(\rho_{m_1^{(k)}},\ldots,\rho_{m_I^{(k)}} ) \in \partial_{m}(-\tilde{\beta})(m^{(k)})$ ;

     \STATE $F^{(k)}\leftarrow \sum_{i\in\O}  \tilde{f}_i(m_i^{(k)},\lambda_i^{(k)})-\tilde{\beta}(m^{(k)})-\sum_{i\in\O}(m^{(k)}_i)^{\mathsf{T}}v$;

     \IF{$F^{(k)} < \sum_{i\in\O}\vartheta_i^{(k)}+\eta^{(k)}-\sum_{i\in\O}(m^{(k)}_i)^{\mathsf{T}}v$}
        \STATE  $\mathcal{L}\leftarrow \mathcal{L} \cup \{k\}$;
     \ENDIF

    \IF{($k = 1$) or ($k\geq 2$ and $F^{(k)} \geq (1-\gamma)\bar{F}^{(k)}+ \gamma (\sum_{i\in\O}\vartheta_i^{(k)}+\eta^{(k)}-\sum_{i\in\O}(m^{(k)}_i)^{\mathsf{T}}v)$)}
        \STATE $\bar{m}^{(k)} \leftarrow m^{(k)}$, $\bar{\lambda}^{(k)}\leftarrow \lambda^{(k)}$;
       \ELSE
         \STATE $\bar{m}^{(k)} \leftarrow \bar{m}^{(k-1)}$, $\bar{\lambda}^{(k)} \leftarrow \bar{\lambda}^{(k-1)}$;
     \ENDIF

 \STATE Solve the master problem. Let $(m^{(k+1)},\lambda^{(k+1)},\vartheta^{(k+1)},\eta^{(k+1)})$ be an optimal solution;

    \STATE  (Optional) Remove all cuts whose dual variables at the solution of master problem are zero;

 \STATE  $\bar{F}^{(k+1)}\leftarrow\sum_{i\in\O}  \tilde{f}_i(\bar{m}^{(k)}_i,\bar{\lambda}^{(k)}_i)-\tilde{\b}(\bar{m}^{(k)})-\sum_{i\in\O}(\bar{m}^{(k)}_i)^{\mathsf{T}}v$;

    \UNTIL{$\sum_{i\in\O}\vartheta_i^{(k+1)}+\eta^{(k+1)}-\sum_{i\in\O}(m^{(k+1)}_i)^{\mathsf{T}}v= \bar{F}^{(k+1)}$};

    \RETURN $\left\{
              \begin{array}{ll}
                \bar{F}^{(k+1)}          & \hbox{: Optimal value $\P_2(v)$;} \\
                (\bar{m}^{(k)},\bar{\lambda}^{(k)}) & \hbox{: An optimal solution of $(D_2(v))$;} \\
              \end{array}
            \right.$
\end{algorithmic}
\end{algorithm*}

Next, we provide a recipe for computing the subgradients $g_{m_i},g_{\lambda_i},\rho_m$. Let us denote by $\partial_{m_i,\lambda_i}\tilde{f}_i (m^\prime_i, \lambda^\prime_i)$ the subdifferential  of the function $\tilde{f}_i(\cdot,\cdot)$ at a point $(m_i^\prime,\lambda^\prime_i)\in\R_+^J\times\R^M$, and by $\partial_{m}(-\tilde{\beta})(m^\prime)$ the subdifferential of the function $-\tilde{\beta}(\cdot)$ at a point $m^\prime\in\L^J_+$. In the next proposition, we show how to compute $\partial_{m_i,\lambda_i} \tilde{f}_i(m^\prime_i,\lambda^\prime_i)$ and a subgradient $\rho_m$ of the function $-\tilde{\beta}(\cdot)$.

\begin{proposition}
\begin{enumerate}[(a)]
\item For $i\in\O,m^{\prime}_i\in\R^J_+,\lambda_i^\prime\in\R^M$, let
\begin{equation*}
\tilde{A}_i(m^\prime_i,\lambda^\prime_i) \coloneqq \argmin_{(x_i,y_i)\in\F_i}\of{(m^\prime_i)^{\mathsf{T}} (Cx_i+Q_i y_i)+p_i (\lambda^\prime_i)^{\mathsf{T}}x_i}.
\end{equation*}
Then,
\begin{equation*}
\partial_{m_i,\lambda_i}\tilde{f}_i(m^\prime_i,\lambda^\prime_i)=\co \cb{\of{Cx_i + Q_i y_i,p_i x_i}\mid (x_i,y_i)\in \tilde{A}_i(m^\prime_i,\lambda^\prime_i)}.
\end{equation*}
\item Recall that the set $\A=\cb{u\in\L^J\mid 0\in R(u)}$ is the acceptance set of $R$. For $m^\prime \in\L^J_+$, let
\begin{equation*}
\tilde{B}(m^\prime) \coloneqq \argmax_{u\in\A}\sum_{i\in\O}(m_i^\prime )^{\mathsf{T}}u_i
\end{equation*}
and assume that $\tilde{B}(m^\prime)\neq \emptyset$.
Then,
\begin{equation*}
\partial_{m}(-\tilde{\beta})(m^\prime)\supseteq \co \cb{u=( u_1,\ldots, u_I) \mid u\in \tilde{B}(m^\prime)}.
\end{equation*}
\end{enumerate}
\end{proposition}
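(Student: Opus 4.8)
The plan is to handle both parts exactly as Propositions~\ref{subgradientf} and~\ref{subgradientbeta} were handled for the $(P_1(w))$ decomposition. The key observation is that $\tilde f_i$ in \eqref{tildef} and $-\tilde\beta$ in \eqref{betatilde} are each a parametric optimum of a family of functions that are \emph{affine} in the variable with respect to which we differentiate and affine--continuous in the optimization variable, so that a Danskin-type result --- Theorem~2.87 in \cite{ruszbook} --- identifies the (super)differential with the convex hull of the partial gradients taken over the corresponding argmin/argmax set. Note that, in contrast to $f_i(\cdot,\cdot,\cdot)$ and $-\beta(\cdot,\cdot)$, the functions $\tilde f_i$ and $-\tilde\beta$ are genuinely concave (see Remark~\ref{betaf}), which is what makes this route work.

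For part~(a), I would set $\tilde\varphi_i(x_i,y_i,m_i,\lambda_i):=m_i^{\mathsf{T}}(Cx_i+Q_iy_i)+p_i\lambda_i^{\mathsf{T}}x_i$, so that $\tilde f_i(m_i,\lambda_i)=\inf_{(x_i,y_i)\in\F_i}\tilde\varphi_i(x_i,y_i,m_i,\lambda_i)$. For every fixed $(x_i,y_i)\in\F_i$ the map $(m_i,\lambda_i)\mapsto\tilde\varphi_i(x_i,y_i,m_i,\lambda_i)$ is affine (indeed linear); for every fixed $(m_i,\lambda_i)\in\R^J_+\times\R^M$ the map $(x_i,y_i)\mapsto\tilde\varphi_i(x_i,y_i,m_i,\lambda_i)$ is affine and continuous; and $\F_i$ is compact by assumption. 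Hence $\tilde f_i$ is concave and finite-valued with minimizer set $\tilde A_i(m_i',\lambda_i')$, and Theorem~2.87 in \cite{ruszbook} yields $\partial_{m_i,\lambda_i}\tilde f_i(m_i',\lambda_i') = \co\bigl\{\nabla_{(m_i,\lambda_i)}\tilde\varphi_i(x_i,y_i,m_i',\lambda_i') \mid (x_i,y_i)\in\tilde A_i(m_i',\lambda_i')\bigr\}$. Since $\nabla_{(m_i,\lambda_i)}\tilde\varphi_i(x_i,y_i,m_i',\lambda_i')=(Cx_i+Q_iy_i,\,p_ix_i)$, the claimed identity follows. This step is routine and is essentially the proof of Proposition~\ref{subgradientf} with the weight $w$ absorbed into $m_i$ through Lemma~\ref{conversionlemma}.

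For part~(b), I would write $\psi(u,m):=\sum_{i\in\O}m_i^{\mathsf{T}}u_i$; for fixed $u\in\A$ this is linear in $m$, and for fixed $m\in\L^J_+$ it is affine and continuous in $u$, so $\tilde\beta(m)=\sup_{u\in\A}\psi(u,m)$ is convex and $-\tilde\beta$ is concave. The main obstacle, and the only genuine difference from part~(a), is that $\A$ is merely closed and not compact, so the Danskin-type theorem supplies only the inclusion ``$\supseteq$'' --- which is precisely what is asserted. That inclusion can in fact be obtained directly: for any $u^{\ast}=(u_1^{\ast},\ldots,u_I^{\ast})\in\tilde B(m')$ one has $\tilde\beta(m)\ge\psi(u^{\ast},m)$ for all $m$, with equality at $m'$, so $m\mapsto\psi(u^{\ast},m)$ is an affine minorant of $\tilde\beta$ that is exact at $m'$; hence $(u_1^{\ast},\ldots,u_I^{\ast})$ is a subgradient of the convex function $\tilde\beta$ at $m'$, and translating this to a supergradient of $-\tilde\beta$ and taking convex combinations (the superdifferential being convex) gives the asserted convex hull, in the same normalization as in Proposition~\ref{subgradientbeta}. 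The hypothesis $\tilde B(m')\neq\emptyset$ is used precisely to make the right-hand side nonempty and to guarantee $\tilde\beta(m')<+\infty$, so that $\partial_m(-\tilde\beta)(m')$ is well defined. I expect no difficulty beyond being careful that, in this unbounded case, Theorem~2.87 in \cite{ruszbook} delivers only one of the two inclusions.
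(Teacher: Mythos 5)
Your proposal follows exactly the route the paper takes: the paper simply states that the proof is analogous to those of Propositions~\ref{subgradientf} and~\ref{subgradientbeta} (both of which invoke Theorem~2.87 of \cite{ruszbook} after checking affineness in each block of variables), and your argument reproduces those checks with the weight $w$ absorbed into $m$. Your additional remark that the non-compactness of $\A$ is why only the inclusion ``$\supseteq$'' is claimed in part~(b), together with the direct affine-minorant argument, is a correct and slightly more careful rendering of the same idea.
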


\begin{proof}
The proof of this proposition is similar to the proofs of Propositions~\ref{subgradientf} and~\ref{subgradientbeta}. Therefore, it is omitted.
\end{proof}

\subsubsection{Recovery of primal solution}\label{P2recovery}

The primal Benson algorithm requires an optimal solution $(x_{(v)},y_{(v)},\a_{(v)})$ of the problem $(P^\prime_2(v))$. Therefore, in Theorem~\ref{P2primal}, we suggest a procedure to recover an optimal primal solution from the solution of the master problem $(MP_2(v))$.

\begin{theorem}\label{P2primal}
Let $\mathcal{L}=\cb{1,\ldots,k}$ be the index set at the last iteration of the dual bundle method with the approximate stopping condition \eqref{approxstop2} for some $\varepsilon>0$. Let $n+1$ be the first descent iteration after the appriximate stopping condition is satisfied and let $\mathcal{L}^\prime=\cb{1,\ldots,n}$. For $(MP_2(v))$ with centers $\bar{m}^{(k)},\bar{\lambda}^{(k)}$ and index set $\mathcal{L}^{\prime}$, let $\tau=(\tau_i^{(\ell)})_{i\in\O,\ell\in\mathcal{L}^{\prime}},\theta=(\theta^{(\ell)})_{\ell\in\mathcal{L}^{\prime}},\sigma\in\R^M,\psi\in\R,\nu=(\nu_i)_{i\in\O}$ be the Lagrangian dual variables assigned to the constraints \eqref{p2con1}, \eqref{p2con2}, \eqref{p2con3}, \eqref{p2con4}, \eqref{p2con5}, respectively, with $\tau_i^{(\ell)}\geq 0,\theta^{(\ell)}\geq 0,\nu_i\in\R^J_+$ for each $i\in\O,\ell\in\mathcal{L}^{\prime}$. Let $(x_i^{(\ell)}, y_i^{(\ell)})$ be an optimal solution of the subproblem in line~6 of Algorithm~\ref{decomp2} for each $i\in\O$ and $\ell\in\mathcal{L}^{\prime}$. Let
\[
\of{\tau^{(n+1)}=(\tau_i^{(\ell,n+1)})_{i\in\O,\ell\in\mathcal{L}^\prime},\theta^{(n+1)}=(\theta^{(\ell,n+1)})_{\ell\in\mathcal{L}^\prime},\sigma^{(n+1)},\psi^{(n+1)},\nu^{(n+1)}=(\nu_i^{(n+1)})_{i\in\O}}
\]
be a dual optimal solution for $(MP_2(v))$. Let $x_{(v)}=((x_{(v)})_i)_{i\in\O},y_{(v)}=((y_{(v)})_i)_{i\in\O}$ be defined by
\[
(x_{(v)})_i\coloneqq \sum_{\ell\in\mathcal{L}^\prime}\tau_i^{(\ell)}x_i^{(\ell)},\quad (y_{(v)})_i\coloneqq \sum_{\ell\in\mathcal{L}^\prime}\tau_i^{(\ell)}y_i^{(\ell)}.
\]
Let
\[
\a_{(v)}\coloneqq \inf\cb{\alpha\in\R\mid v+\alpha\1\in R(C\bar{x}+Q\bar{y})}.
\]
Then, $(x_{(v)},y_{(v)},\a_{(v)})$ is an approximately optimal solution of $(P^\prime_2(v))$ in the following sense:
\begin{enumerate}[(a)]
\item $((x_{(v)})_i,(y_{(v)})_i)\in\F_i$ for each $i\in\O$.
\item $v+\a_{(v)}\1\in R(Cx_{(v)}+Qy_{(v)})$.
\item As $\varepsilon\rightarrow 0$, it holds $(x_{(v)})_i-\sigma^{(n+1)}\rightarrow 0$ for each $i\in\O$.
\item As $\varepsilon\rightarrow 0$, it holds $\a_{(v)}\rightarrow \mathscr{P}_2(v)$.
\end{enumerate}
\end{theorem}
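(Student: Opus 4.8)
The plan is to follow the route of the proof of \thmref{P1primal} in the appendix, using the Karush--Kuhn--Tucker (KKT) optimality conditions of the quadratic master problem $(MP_2(v))$ together with the strong duality of \thmref{p2thm}, \lemref{conversionlemma} and \lemref{scalarizationlemma}. Parts (a) and (b) are structural and short. For (a), I would write the KKT stationarity condition of $(MP_2(v))$ (taken with index set $\mathcal{L}^\prime$) with respect to the free variables $\vartheta_i$: since $\vartheta_i$ enters the objective with coefficient $1$ and each cut \eqref{p2con1} with coefficient $1$, stationarity forces $\sum_{\ell\in\mathcal{L}^\prime}\tau_i^{(\ell)}=1$ for each $i\in\O$. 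As $\tau_i^{(\ell)}\geq 0$, each $(x_i^{(\ell)},y_i^{(\ell)})\in\F_i$, and $\F_i$ is convex, $((x_{(v)})_i,(y_{(v)})_i)=\sum_{\ell\in\mathcal{L}^\prime}\tau_i^{(\ell)}(x_i^{(\ell)},y_i^{(\ell)})$ is a convex combination of points of $\F_i$, hence lies in $\F_i$. For (b), by (A3) and (A5) (see Remark~\ref{completelattice}) the set $R(Cx_{(v)}+Qy_{(v)})$ is nonempty, closed, convex, not equal to $\R^J$, and satisfies $R(Cx_{(v)}+Qy_{(v)})+\R^J_+=R(Cx_{(v)}+Qy_{(v)})$; since $\1\in\R^J_{++}$, the set $\cb{\a\in\R\mid v+\a\1\in R(Cx_{(v)}+Qy_{(v)})}$ is a nonempty closed half-line bounded below, so the infimum $\a_{(v)}$ is attained and $v+\a_{(v)}\1\in R(Cx_{(v)}+Qy_{(v)})$.

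For (c), the KKT stationarity condition of $(MP_2(v))$ with respect to $\lambda_i$, using $g_{\lambda_i^{(\ell)}}=p_i x_i^{(\ell)}$, the multiplier $\sigma^{(n+1)}$ of \eqref{p2con3}, and $\sum_{\ell\in\mathcal{L}^\prime}\tau_i^{(\ell)}x_i^{(\ell)}=(x_{(v)})_i$, collapses to an identity of the form $p_i\big((x_{(v)})_i-\sigma^{(n+1)}\big)=2\varrho\big(\lambda_i^{(n+1)}-\bar\lambda_i^{(k)}\big)$, where $(m^{(n+1)},\lambda^{(n+1)})$ is the primal optimal solution of the master problem. It then suffices to show $\lambda^{(n+1)}-\bar\lambda^{(k)}\to 0$ as $\varepsilon\to 0$; this follows exactly as in the proof of \thmref{P1primal}: the approximate stopping rule \eqref{approxstop2} bounds the predicted increase of the master problem, which---because of the strong convexity induced by the regularization terms $\varrho\norm{\,\cdot\,-\bar m^{(k)}}^2$, $\varrho\norm{\,\cdot\,-\bar\lambda^{(k)}}^2$---bounds $\varrho\norm{(m^{(n+1)},\lambda^{(n+1)})-(\bar m^{(k)},\bar\lambda^{(k)})}^2$ by a quantity vanishing with $\varepsilon$, the choice of $n+1$ as the first descent iteration after \eqref{approxstop2} holds being what ties the master problem of the statement to an actual iterate of Algorithm~\ref{decomp2}.

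For (d), I would first observe that, by construction, $\a_{(v)}$ equals the optimal value of $\inf\cb{\a\in\R\mid v+\a\1\in R(Cx_{(v)}+Qy_{(v)})}$, so, combining \lemref{scalarizationlemma} with \lemref{conversionlemma}, $\a_{(v)}=\sup\cb{\sum_{i\in\O}m_i^\mathsf{T}\big(C(x_{(v)})_i+Q_i(y_{(v)})_i\big)-\sum_{i\in\O}m_i^\mathsf{T}v-\tilde{\b}(m)\mid m\in\M_f^J,\;\sum_{i\in\O}m_i^\mathsf{T}\1=1}$, which is a finite convex, hence continuous, function of $(x_{(v)},y_{(v)})$. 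By complementary slackness for the cuts \eqref{p2con1} (a cut with $\tau_i^{(\ell)}>0$ is active, so the associated affine function supports $\tilde f_i(\cdot,\cdot)$ at $(m_i^{(n+1)},\lambda_i^{(n+1)})$, i.e.\ $(x_i^{(\ell)},y_i^{(\ell)})\in\tilde A_i(m_i^{(n+1)},\lambda_i^{(n+1)})$), the reconstructed $((x_{(v)})_i,(y_{(v)})_i)$ minimizes the $i$-th subproblem \eqref{tildef} at $(m_i^{(n+1)},\lambda_i^{(n+1)})$. Using the compactness of $\F$, pass along a sequence $\varepsilon\downarrow 0$ to a limit $(\hat x,\hat y)\in\F$ of $(x_{(v)},y_{(v)})$; by (c) the vector $\hat x$ is nonanticipative, and, arguing as for (c) and using \cite[Theorem~7.16]{ruszbook}, $(m^{(n+1)},\lambda^{(n+1)})$ converges to an optimal solution $(m^\ast,\lambda^\ast)$ of $(D_2(v))$, so upper semicontinuity of the subproblem argmin maps gives that $(\hat x,\hat y)$ minimizes every subproblem \eqref{tildef} at $(m^\ast,\lambda^\ast)$. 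Together with $\E\sqb{\lambda^\ast}=0$ and $\sum_{i\in\O}(m_i^\ast)^\mathsf{T}\1=1$, the pair consisting of $(\hat x,\hat y)$ and $(m^\ast,\lambda^\ast)$ is then a saddle point of the Lagrangian built in the proof of \thmref{p2thm}, so the deterministic first-stage vector determined by $\hat x$, together with $\hat y$, is optimal for $(P_2^\prime(v))$, whence $\a_{(v)}\to\mathscr{P}_2(v)$ along the chosen sequence; since the limit is the same along every such sequence, the convergence holds as $\varepsilon\to 0$.

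The main obstacle is the limiting argument in (d): one must reconcile finite-$\varepsilon$ KKT information for $(MP_2(v))$ with the asymptotic convergence of the bundle iterates, control the boundedness of $\lambda^{(n+1)}$ so that error terms such as $\sum_{i\in\O}p_i(\lambda_i^{(n+1)})^\mathsf{T}\big((x_{(v)})_i-\sigma^{(n+1)}\big)$ vanish, and invoke the right continuity/closedness properties---lower semicontinuity of $u\mapsto\inf\cb{\a\in\R\mid v+\a\1\in R(u)}$ (from \lemref{scalarizationlemma}) and upper semicontinuity of the subproblem argmin maps---to certify that the subsequential limit is a genuine primal optimal solution. The secondary delicate point, already needed in (c), is the bookkeeping linking the hypothetical master problem (centers $\bar m^{(k)},\bar\lambda^{(k)}$, index set $\mathcal{L}^\prime$) to the actual run of Algorithm~\ref{decomp2}; this is handled exactly as in the proof of \thmref{P1primal}.
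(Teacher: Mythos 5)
Your parts (a)--(c) are correct and follow the paper's route: the stationarity of $(MP_2(v))$ in $\vartheta_i$ gives $\sum_{\ell\in\mathcal{L}^\prime}\tau_i^{(\ell,n+1)}=1$ (the paper's \eqref{convexcomba2}), convexity of $\F_i$ gives (a), attainment of the infimum defining $\a_{(v)}$ (closed upper values of $R$, (A3)) gives (b), and the $\lambda$-stationarity \eqref{firstorder2} combined with $\phi^{(n+1,k)}\rightarrow 0$ (Lemma~7.17 of \cite{ruszbook}) gives (c), exactly as in Lemmas~\ref{Lagrangelemma2} and~\ref{feasibility2}.

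Part (d), however, has a genuine gap. Your argument produces a feasible limit point $(\hat x,\hat y)$ that minimizes the Lagrangian $L(\cdot,\cdot,m^\ast,\lambda^\ast)$ at a dual optimum $(m^\ast,\lambda^\ast)$, and from this you assert a saddle point and hence primal optimality. But that establishes only one of the two saddle inequalities. The value you must control is $\a_{(v)}=\sup\cb{\sum_{i\in\O}m_i^{\mathsf{T}}(C(x_{(v)})_i+Q_i(y_{(v)})_i)-\sum_{i\in\O}m_i^{\mathsf{T}}v-\tilde{\b}(m)\mid \sum_{i\in\O}m_i^{\mathsf{T}}\1=1}$, an \emph{inner supremum over} $m$ evaluated at the recovered primal point; knowing $L(\hat x,\hat y,m^\ast,\lambda^\ast)=\mathscr{P}_2(v)$ only yields $\a_{(v)}\geq\mathscr{P}_2(v)$ in the limit. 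The reverse inequality requires showing that $m^{(n+1)}$ is (asymptotically) a \emph{maximizer} of this inner supremum at $(\bar x,\bar y)$, i.e.\ the worst-case measure for $C\bar x+Q\bar y$ --- and this is precisely the half of the saddle condition you never prove. The paper supplies it through the $m$-stationarity condition \eqref{mufirstorder22} of the master problem together with the $\bar{\varepsilon}^{(n+1)}$-subgradient property \eqref{betaapproxsbg2} of $-\bar u$, which shows that the first-order condition \eqref{mufirstorder3} of the inner supremum holds approximately at $(m^{(n+1)},\psi^{(n+1)},\nu^{(n+1)})$; your proposal never uses the $m$-stationarity of $(MP_2(v))$ at all. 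A secondary inaccuracy feeds into this: an active cut in \eqref{p2con1} does \emph{not} support $\tilde f_i$ at $(m_i^{(n+1)},\lambda_i^{(n+1)})$ (it is tangent at $(m_i^{(\ell)},\lambda_i^{(\ell)})$ and majorizes $\tilde f_i$ elsewhere), so $(\bar x_i,\bar y_i)$ is only an $\bar{\varepsilon}^{(n+1)}$-minimizer of the subproblem, via \eqref{varthetaslackness2} and \eqref{approxopt2}; this is repairable in your limiting argument, but the missing second saddle inequality is not, without importing the $m$-stationarity machinery of Lemma~\ref{optimality2}(b).
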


The proof of Theorem~\ref{P2primal} is given in Appendix~\ref{proofof4}.

\subsubsection{Recovery of a solution to \eqref{d2vdef}}

In addition to a primal optimal solution $(x_{(v)},y_{(v)},\a_{(v)})$, the primal Benson algorithm also requires an optimal solution $\gamma_{(v)}$ of the dual problem $(LD_2(v))$ (\emph{see} Section~\ref{p2section}). Therefore, in Theorem~\ref{ld2sol}, we suggest a procedure to recover this solution from the solution of the master problem $(MP_2(v))$.

\begin{theorem}\label{ld2sol}
In the setting of Theorem~\ref{P2primal}, let
\begin{equation}
\gamma_{(v)} = \sum_{i\in\O}m^{(n+1)}_i.
\end{equation}
Then, $\gamma_{(v)}$ is an approximately optimal solution of \eqref{d2vdef} in the following sense: as $\varepsilon\rightarrow 0$, it holds
\begin{equation}\label{triangle2}
\abs{\inf_{(x,y)\in\X,\a\in\R}\of{\a+\inf_{z\in R(Cx+Qy)-v-\alpha\1}\gamma_{(v)}^{\mathsf{T}}z}-\mathscr{P}_2(v)}\rightarrow 0.
\end{equation}
\end{theorem}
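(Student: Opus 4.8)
The plan is to evaluate the inner infimum in $(LD_2(v))$ directly at $\gamma_{(v)}=\sum_{i\in\O}m^{(n+1)}_i$, to recognize it as $\P_1(\gamma_{(v)})-\gamma_{(v)}^{\mathsf{T}}v$, and then to squeeze this quantity between the $(D_2(v))$-objective value reached by the bundle method at the recovery iterate and the optimal value $\P_2(v)$. First I would check that $\gamma_{(v)}$ is feasible for $(LD_2(v))$: each $m^{(n+1)}_i\in\R^J_+$ by \eqref{p2con5}, so $\gamma_{(v)}\in\R^J_+$, and $\gamma_{(v)}^{\mathsf{T}}\1=\sum_{i\in\O}(m^{(n+1)}_i)^{\mathsf{T}}\1=1$ by \eqref{p2con4}; in particular $\gamma_{(v)}\neq 0$. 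Next, for any $(x,y)\in\X$, shifting the set $R(Cx+Qy)$ by $-v-\a\1$ gives
\[
\inf_{z\in R(Cx+Qy)-v-\a\1}\gamma_{(v)}^{\mathsf{T}}z=\varphi_{\gamma_{(v)}}(Cx+Qy)-\gamma_{(v)}^{\mathsf{T}}v-\a\,\gamma_{(v)}^{\mathsf{T}}\1 ,
\]
where $\varphi_{\gamma_{(v)}}$ is the scalarization \eqref{phi} (this is finite because the bundle method only visits points $m$ with $\tilde\beta(m)<\infty$, i.e.\ $\gamma_{(v)}$ lies in the effective domain of $\beta(\cdot,\gamma_{(v)})$). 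Adding $\a$ and using $\gamma_{(v)}^{\mathsf{T}}\1=1$, the term $\a(1-\gamma_{(v)}^{\mathsf{T}}\1)$ vanishes, so
\[
\inf_{(x,y)\in\X,\a\in\R}\of{\a+\inf_{z\in R(Cx+Qy)-v-\a\1}\gamma_{(v)}^{\mathsf{T}}z}=\inf_{(x,y)\in\X}\varphi_{\gamma_{(v)}}(Cx+Qy)-\gamma_{(v)}^{\mathsf{T}}v=\P_1(\gamma_{(v)})-\gamma_{(v)}^{\mathsf{T}}v .
\]

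Then I would bound $\P_1(\gamma_{(v)})-\gamma_{(v)}^{\mathsf{T}}v$ on both sides. For the upper bound, Proposition~\ref{d2v} (strong duality between $(P_2(v))$ and $(LD_2(v))$) shows that the inner objective of $(LD_2(v))$ at any $\gamma\in\R^J_+$ is at most $\P_2(v)$, hence $\P_1(\gamma_{(v)})-\gamma_{(v)}^{\mathsf{T}}v\leq\P_2(v)$. For the lower bound, apply Lemma~\ref{conversionlemma} to write $m^{(n+1)}\leftrightarrow(\mu^{(n+1)},\gamma_{(v)})$ with $\mu^{(n+1)}\in\M_1^J$, so that $\tilde f_i(m^{(n+1)}_i,\lambda^{(n+1)}_i)=f_i(\mu^{(n+1)}_i,\lambda^{(n+1)}_i,\gamma_{(v)})$, $\tilde\beta(m^{(n+1)})=\beta(\mu^{(n+1)},\gamma_{(v)})$, and $\sum_{i\in\O}(m^{(n+1)}_i)^{\mathsf{T}}v=\gamma_{(v)}^{\mathsf{T}}v$. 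Since $(\mu^{(n+1)},\lambda^{(n+1)})$ satisfies $\E[\lambda^{(n+1)}]=0$ by \eqref{p2con3}, the dual representation $(D_1(\gamma_{(v)}))$ of Theorem~\ref{p1thm} yields
\[
F^{(n+1)}:=\sum_{i\in\O}\tilde f_i(m^{(n+1)}_i,\lambda^{(n+1)}_i)-\tilde\beta(m^{(n+1)})-\sum_{i\in\O}(m^{(n+1)}_i)^{\mathsf{T}}v\;\leq\;\P_1(\gamma_{(v)})-\gamma_{(v)}^{\mathsf{T}}v ,
\]
where $F^{(n+1)}$ is exactly the $(D_2(v))$-objective value at the iterate used for the primal recovery (line~10 of Algorithm~\ref{decomp2}).

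Combining the two bounds, $F^{(n+1)}\leq\P_1(\gamma_{(v)})-\gamma_{(v)}^{\mathsf{T}}v\leq\P_2(v)$, so the bracketed quantity in \eqref{triangle2} equals $\P_2(v)-\bigl(\P_1(\gamma_{(v)})-\gamma_{(v)}^{\mathsf{T}}v\bigr)$ and is squeezed between $0$ and $\P_2(v)-F^{(n+1)}$. It therefore remains to show $\P_2(v)-F^{(n+1)}\to 0$ as $\varepsilon\to 0$, i.e.\ that the $(D_2(v))$-objective reached at the first descent iteration after the approximate stopping condition \eqref{approxstop2} converges to the optimal value. This is the delicate step and the main obstacle: it is the same $\varepsilon$-quality estimate already established in the proof of Theorem~\ref{P2primal} in the appendix (it underlies part~(d) there, $\a_{(v)}\to\P_2(v)$), obtained by combining \eqref{approxstop2} with the descent-step criterion of Algorithm~\ref{decomp2} and the fact that the bundle model over-estimates the concave objective $(m,\lambda)\mapsto\sum_{i\in\O}\tilde f_i(m_i,\lambda_i)-\tilde\beta(m)-\sum_{i\in\O}m_i^{\mathsf{T}}v$. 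I would simply invoke that estimate here; everything else is a direct consequence of the definition of $\varphi_{(\cdot)}$ in \eqref{phi}, Lemma~\ref{conversionlemma}, and Theorems~\ref{p1thm}, \ref{p2thm} and~\ref{d2v}.
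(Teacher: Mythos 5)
Your proof is correct and arrives at exactly the same sandwich as the paper: the objective of $(LD_2(v))$ evaluated at $\gamma_{(v)}$ is squeezed between $F^{(n+1)}$ and $\mathscr{P}_2(v)$, and the limit $F^{(n+1)}\rightarrow\mathscr{P}_2(v)$ is imported from Lemma~\ref{optimality2}(a). The difference is in how you obtain the two bounding inequalities. The paper re-runs the scenario-decomposition machinery at the fixed weight $\gamma_{(v)}$ (dualizing the nonanticipativity constraints again and invoking Sion's minimax theorem) to express the inner infimum as a supremum over $\cb{m\in\M_f^J\mid\sum_{i\in\O}m_i=\gamma_{(v)}}$, then gets the upper bound by enlarging this feasible region to $\cb{\sum_{i\in\O}m_i^{\mathsf{T}}\1=1}$ and citing Theorem~\ref{p2thm}. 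You instead observe directly, via translativity of the support function and $\gamma_{(v)}^{\mathsf{T}}\1=1$, that the inner infimum collapses to $\P_1(\gamma_{(v)})-\gamma_{(v)}^{\mathsf{T}}v$, after which the upper bound is weak duality for $(LD_2(v))$ (Proposition~\ref{d2v}) and the lower bound is feasibility of $(\mu^{(n+1)},\lambda^{(n+1)})$ in the already-proved dual $(D_1(\gamma_{(v)}))$ of Theorem~\ref{p1thm} together with Lemma~\ref{conversionlemma}. This buys a shorter argument that reuses existing duality results rather than repeating the minimax interchange, and it makes the geometric content transparent ($\gamma_{(v)}$ is an approximate maximizer of $w\mapsto\P_1(w)-w^{\mathsf{T}}v$ over the simplex); the paper's longer chain has the minor advantage of staying entirely within the $\tilde{f}_i,\tilde{\beta}$ formulation used by Algorithm~\ref{decomp2}. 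The only point worth tightening is your finiteness aside: it suffices to note that $\tilde{\beta}(m^{(n+1)})<+\infty$ (the algorithm computes it in $F^{(n+1)}$), which via \eqref{scalar-dual} guarantees $\varphi_{\gamma_{(v)}}(Cx+Qy)>-\infty$.
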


The proof of Theorem~\ref{ld2sol} is given in Appendix~\ref{proofof5}.

\section{Computational Study}\label{computation}

In order to test our methods, we solve a multi-objective risk-averse portfolio optimization problem under transaction costs. We consider a one-period market with $J$ risky assets. Each asset $j\in\J=\cb{1,\ldots,J}$ has a random return $r^{j}\in\L$. At the beginning of the period, it costs $\theta^{jk}\in\R$ units of asset $j$ for an agent to buy one unit of asset $k\in\J$. At the end of the period, the random transaction cost of buying one unit of asset $k$ is $\pi^{jk}\in\L$ units of asset $j$. 

The risk-averse agent has a capital $c\in\R_{++}$ units of asset $1$ to be invested in the $J$ assets. Let $x^j\in\R_+$ denote the number of physical units of asset $j$ purchased by the agent; hence, she spends $x^j\theta^{1j}$ units of asset $1$ for this purchase. At the end of the period, the agent observes the random return of each asset as well as the random transaction costs between the assets. The value of each asset $j$ is $(1+r^j)x^j$ and it is transacted to purchase the $J$ assets with a transaction cost of $\pi^{jk}$ for asset $k$. Let $q^{jk}\in\L_+$ denote the number of physical units of asset $k$ purchased by selling some units of asset $j$. Let $y^k\in\L_+$ denote the total number of physical units of asset $k$ purchased by the agent so that $y^k=\sum_{j\in\J}q^{jk}$. The objective is to minimize the risk of the random cost vector $-y\in\L^J$ using a multivariate convex risk measure $R$. This problem can be formulated as follows:

\begin{align*}
&\text{min}\; \;                    z\;\;\text{w.r.t.}\;\;\R_+^J\\
&\text{s.t. }\;\;                      z\in R(-y)\\
& \quad\quad\; \sum_{j \in \J} \theta^{1j} x^j = c\\
& \quad\quad\;  (1+ r^j_i)x^j = \sum_{k \in \J}\pi^{jk}_i q^{jk}_i,\quad \forall j\in \J, i\in\O\\
& \quad\quad\;       y^j_i=\sum_{k \in \J}q^{kj}_i,\quad \forall j\in \J, i\in\O\\
& \quad\quad\;       z\in\R^J,\;x\in \R^J_+,\; y_i\in\R^J_+,\; q_i\in\R^{J \times J}_+, \quad \forall i\in\O.
\end{align*}
Note that $x \in \R_+^J$ is the first-stage, $y_i\in\R^J_+,\; q_i\in\R^{J \times J}_+, i\in\O$ are the second-stage decision variables.

All computational experiments are conducted on a PC with 8.00 GB of RAM and an Intel(R) Core(TM) i7-4790 CPU@3.60 GHz processor. We use Matlab implementations of Algorithms \ref{decomp1} and \ref{decomp2} where CVX 1.22 is used to solve master problems and CPLEX 12.6 is used to solve subproblems.

We generate two classes of instances where the number of assets $J$ is either 2 or 3. In both cases, we assume $c=1$. We set  $\theta^{12} = 1.0815$, $\theta^{13} = 0.9094$. The return of asset 1 is uniformly distributed between $-0.1$ and $0.2$, denoted by $r^1 \sim U[-0.1,0.2]$. Similarly, we assume  $r^2 \sim U[-0.05,0.1]$ and $r^3 \sim U[-0.15,0.3]$. The random transaction costs among the assets are assumed to have the following distributions: $\pi^{12} \sim U[1,1.1]$, $\pi^{21} \sim U[0.9,1]$, $\pi^{13} \sim U[0.9,1]$, $\pi^{31} \sim U[1,1.1]$, $\pi^{23} \sim U[0.8,1]$, $\pi^{32}\sim U[1,1.2]$, and $\pi^{11} = \pi^{22}=\pi^{33} = 1$.

First of all, in Example~\ref{comp}, we compare our dual bundle method with CVX on the problem $(P_1(w))$ of weighted sum scalarization. Our dual bundle method takes the advantage of scenario-wise decompositions while CVX solves the problem as a standard convex optimization problem without decompositions.

\begin{example}\label{comp}
We compare the CPU times (in seconds) of the dual bundle method and the CVX solver on $(P_1(w))$ instances with two- and three-dimensional multivariate entropic risk measure and different numbers of scenarios ($I$). In each instance, we use a fixed weight vector $w$.

\begin{table}[h]
	\centering
	\begin{tabular}{c|cc}
		\hline \hline 
		$I$     & Dual Bundle Method  & CVX \\ \hline \hline 
		1000  & 869.22 & 75.98 \\
		2500  & 2130.85 & 588.85 \\
		5000  & 4170.55 & 3091.44 \\
		10000 & 8452.47 & ** \\
	\end{tabular}%
\caption{Computational performances of the dual bundle method and CVX for a two-dimensional multivariate entropic risk measure instance with weight vector $w=(1/2, 1/2)$}
	\label{decomcompare1}%
\end{table}%

\begin{table}[h]
	\centering
\begin{tabular}{c|cc}
		\hline \hline 
		$I$     & Dual Bundle Method  & CVX \\ \hline \hline 
		50    & 56.39 & 35.22 \\
		100   & 252.73 & 98.73 \\
		250   & 1838.38 & 346.87 \\
		500   & 6309.39 & ** \\
	\end{tabular}%
	\caption{Computational performances of the dual bundle method and CVX for a three-dimensional multivariate entropic risk measure instance with weight vector $w=(1/3, 1/3, 1/3)$}
		\label{decomcompare2}%
\end{table}%

As observed in Table~\ref{decomcompare1} and Table~\ref{decomcompare2}, the CVX solver overperforms the dual bundle method for smaller numbers of scenarios. However, as the number of scenarios increases, the dual bundle method overperforms the CVX solver. For instance, for $I=10000$ in Table~\ref{decomcompare1}, the CVX solver cannot solve the problem due to a memory error. The same situation is observed for $I=500$ in Table~\ref{decomcompare2}.
\end{example}

For the remainder of this section, we use the multivariate CVaR (\emph{see} Example~\ref{multiCVaR}) and the multivariate entropic risk measure (\emph{see} Example~\ref{multient}) for the choice of $R$. For each risk measure, we consider the biobjective ($J=2$ assets) and the three-objective ($J=3$ assets) cases. We run the primal and dual Benson algorithms with different error parameters ($\epsilon$) and report the total number of scalar optimization problems solved ($\#$opt.), the number of vertices in the final outer approximation ($\#$vert.) and the CPU time in seconds (time).

\begin{example}
(Two-dimensional multivariate CVaR)
We consider $J=2$ assets under $I=500$ scenarios. The parameters of the multivariate CVaR are chosen as $\nu^1=0.8, \nu^2=0.9$. We use error parameter values $\epsilon \in \cb{10^{-2}, 10^{-3}, 10^{-4}}$. The computational results are reported in Table~\ref{table2cvar}. It can be seen that the performances of the primal and dual algorithms are close to each other.

The inner (red lines) and outer (blue lines) approximations of the upper image $\P$ and the lower image $\D$ are given in Figures~\ref{fig1} and \ref{fig2}. These figures are obtained by the primal algorithm. Since the corresponding figures for the dual algorithm are similar, they are omitted. Clearly, the algorithm provides finer approximations for the upper and lower images when $\epsilon$ is reduced from $10^{-3}$ to $10^{-4}$.

\begin{table}[H]
  \centering
\begin{tabular}{llrrr}\hline \hline 
          & $\epsilon$  &  $\#$opt. & $\#$vert. &  time  \\ \hline\hline 
    \multirow{3}[0]{*}{Primal Algorithm} & $10^{-2}$  & 5     & 3     &	2675.69 \\
          & $10^{-3}$ & 11    & 6     	 & 10513.06 \\
          & $10^{-4}$ & 23    & 13	 &	11391.12 \\ \hline
    \multirow{3}[0]{*}{Dual Algorithm} & $10^{-2}$  & 5     & 4     	& 2819.92\\
          & $10^{-3}$& 13    & 8   	& 7021.55
 \\
          & $10^{-4}$ & 25    & 15		& 10007.75
 \\ \hline
    \end{tabular}%
  \caption{Computational results for the two-dimensional multivariate CVaR}\label{table2cvar}
\end{table}%

\begin{figure}[H]
   \centering

   \includegraphics[trim={0cm 0cm 0cm 1.21cm}, clip,width=1\linewidth,height=0.25\textheight,keepaspectratio]{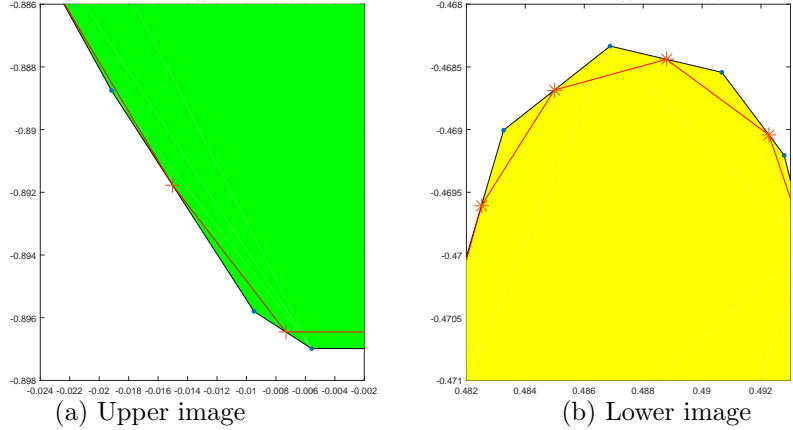}
\vspace{-.8cm}
\caption*{(a) Upper image\hspace{4.2cm}(b) Lower image}
 \caption[]{Inner and outer approximations obtained by the primal algorithm for $\epsilon=10^{-3}$}\label{fig1}
\end{figure}

\begin{figure}[H]
\centering
   \includegraphics[trim={0cm 0cm 0cm 1.21cm}, clip,width=1\linewidth,height=0.25\textheight,keepaspectratio]{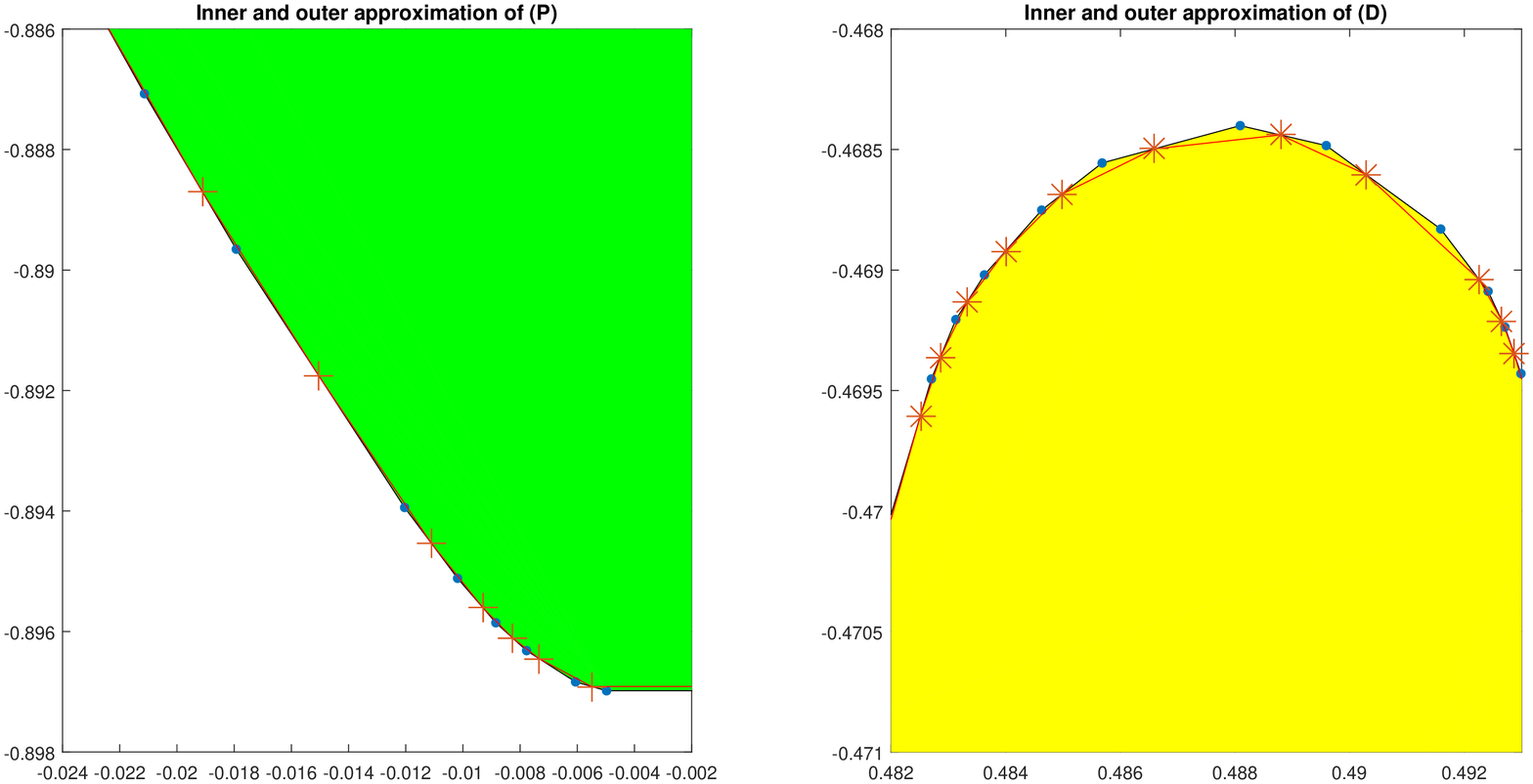}
\vspace{-.8cm}
\caption*{(a) Upper image\hspace{4.2cm}(b) Lower image}
 \caption[]{Inner and outer approximations obtained by the primal algorithm for $\epsilon=10^{-4}$}\label{fig2}
\end{figure}

\end{example}

\begin{example}\label{2dentexample}
(Two-dimensional multivariate entropic risk measure)
We consider $J=2$ assets under $I=500$ scenarios. The parameters of the multivariate entropic risk measure are chosen as $\delta^1=\delta^2=0.1$ and the cone $C$ is generated by the vectors $(2,1)$ and $(1,2)$. We use error parameter values $\epsilon \in \cb{0.1, 0.05, 0.01}$. 

The computational results are reported in Table~\ref{table2ent}. In this example, the dual algorithm solves more optimization problems and enumerates more vertices than the primal algorithm in significantly shorter time. The inner and outer approximations of the upper image $\P$ and the lower image $\D$ obtained by the primal algorithm are given in Figures~\ref{fig3} and \ref{fig4}. Since the corresponding figures for the dual algorithm are similar, they are omitted.





\begin{table}[H]
  \centering
  \begin{tabular}{llrrr}\hline\hline
          & $\epsilon$  &  $\#$opt. & $\#$vert.   & time \\ \hline\hline
    \multirow{3}[0]{*}{Primal Algorithm} & 0.1 &	25 &	13  &	37706.90\\
          & 0.05	& 37 &	19 	& 84730.81 \\
          & 0.01	& 83 &	42	 &	144848.62\\ \hline
    \multirow{3}[0]{*}{Dual Algorithm} &0.1	& 31 &	17 &	13955.43\\
          & 0.05 &	47&	25&	14088.08 \\
          & 0.01 &	85&	44&	17121.26 \\ \hline
    \end{tabular}%
  \caption{Computational results for the two-dimensional multivariate entropic risk measure}\label{table2ent}
  \end{table}%

\begin{figure}[H]
   \centering
   \includegraphics[trim={0cm 0cm 0cm 1.21cm}, clip,width=1\linewidth,height=0.25\textheight,keepaspectratio]{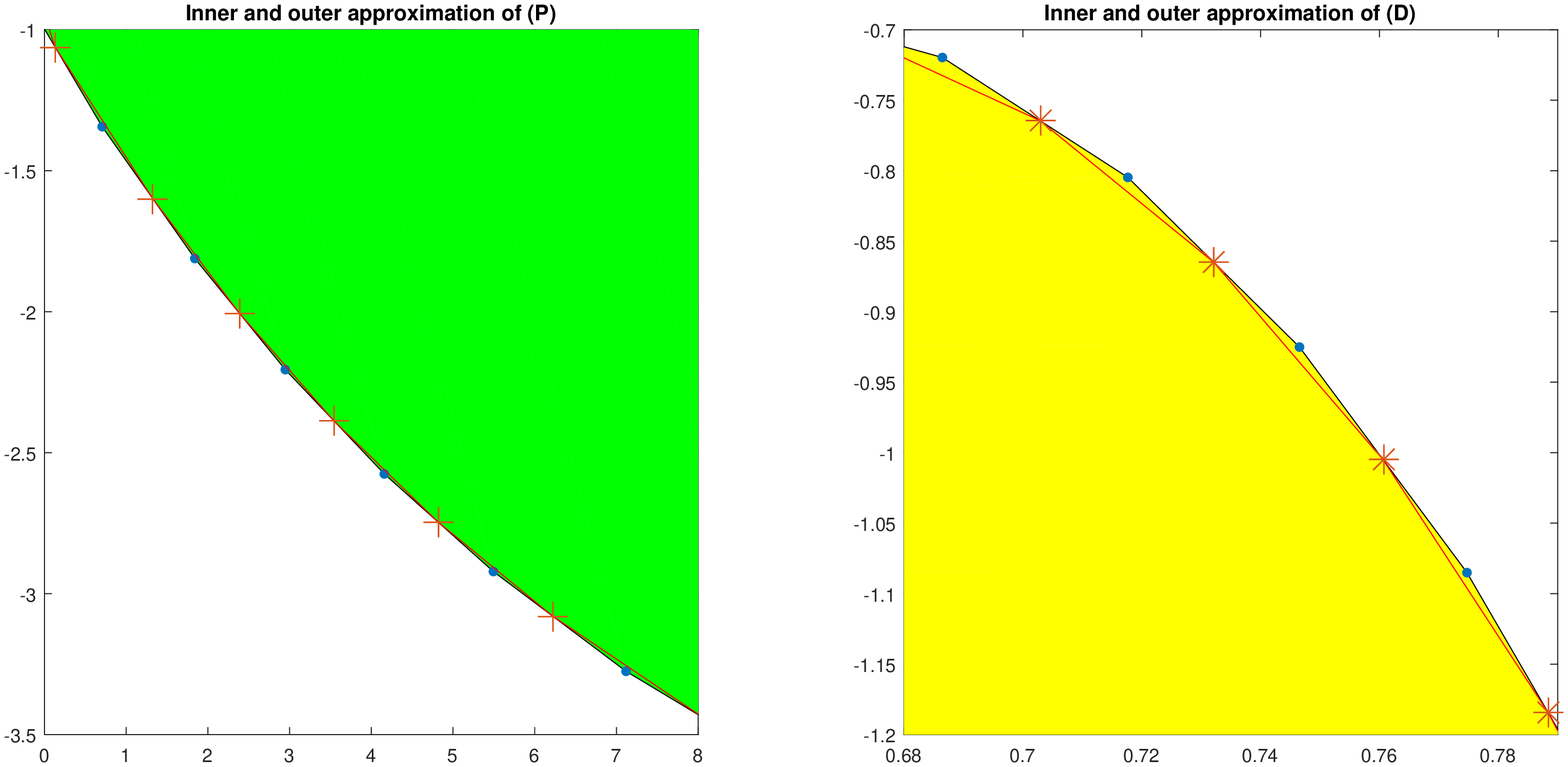}
\vspace{-.8cm}
\caption*{(a) Upper image\hspace{4.2cm}(b) Lower image}
 \caption[]{Inner and outer approximations obtained by the primal algorithm for $\epsilon=0.05$}\label{fig3}
\end{figure}

\begin{figure}[H]
   \centering
   \includegraphics[trim={0cm 0cm 0cm 1.21cm}, clip,width=1\linewidth,height=0.25\textheight,keepaspectratio]{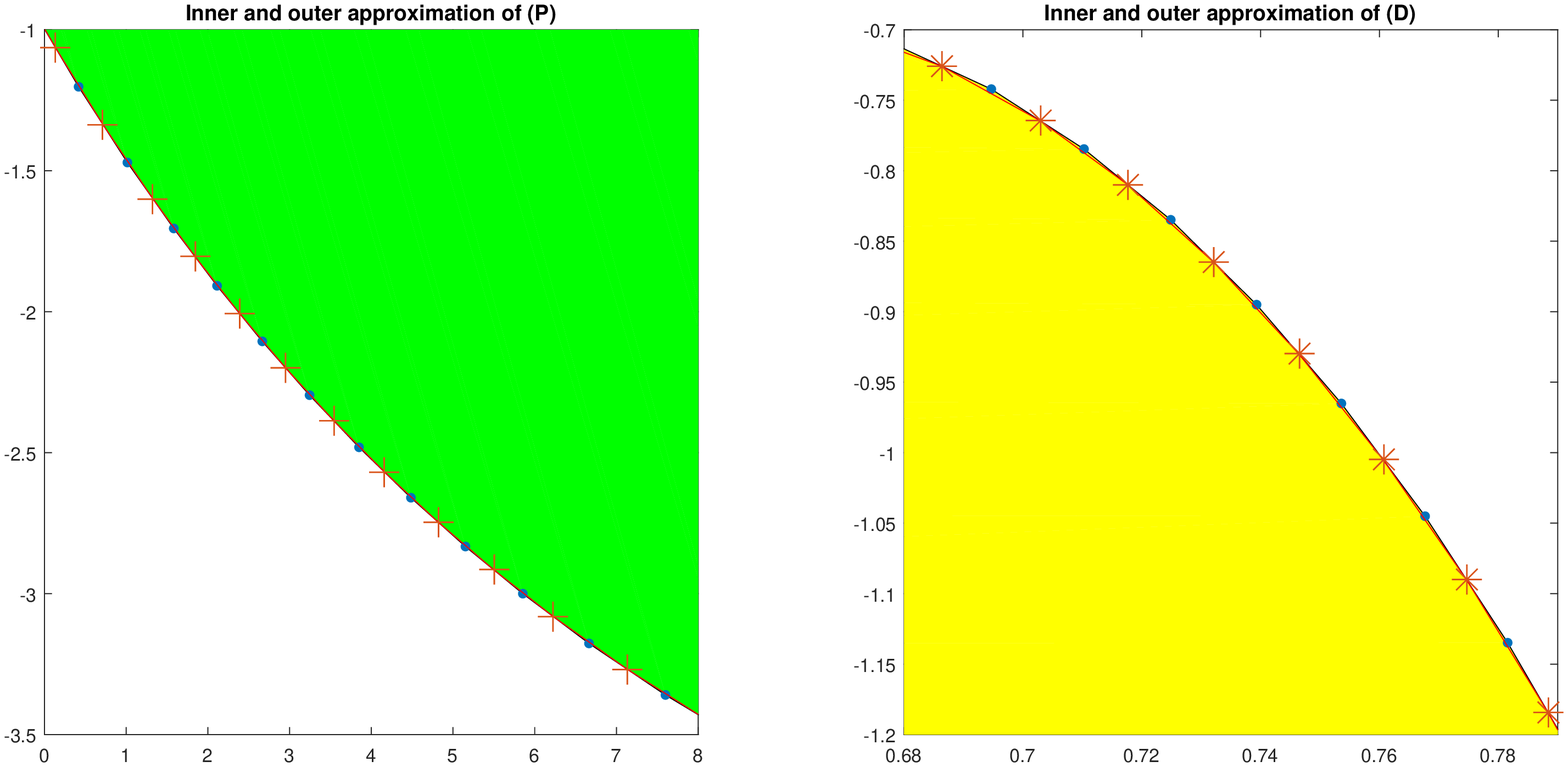}
\vspace{-.8cm}
\caption*{(a) Upper image\hspace{4.2cm}(b) Lower image}
 \caption[]{Inner and outer approximations obtained by the primal algorithm for $\epsilon=0.01$}\label{fig4}
\end{figure}

\end{example}

\begin{example}
(Three-dimensional multivariate CVaR)
We consider $J=3$ assets under $I=250$ scenarios. The parameters of the multivariate CVaR are chosen as $\nu^1=0.8, \nu^2=0.9$. We use error parameter values $\epsilon \in \cb{10^{-2}, 10^{-3}, 10^{-4}}$.

The computational results are reported in Table~\ref{table3cvar}. For $\epsilon=10^{-2}$ and $\epsilon=10^{-3}$, the primal algorithm terminates in shorter time while, for $\epsilon=10^{-4}$, the dual algorithm is faster.

The outer approximations of the upper image $\P$ and the lower image $\D$ obtained by the primal and dual algorithms are given in Figures~\ref{3cvar1}-\ref{3cvar3}. Note that the dots represent the vertices of some polyhedra even if they are not connected by line segments.

As observed in these figures, the primal algorithm provides a better approximation of the lower image compared to the dual algorithm. However, the approximation of the upper image provided by the dual algorithm is better than the one by the primal algorithm.

The multivariate CVaR is defined in terms of the positive part function $(\cdot)^+$, which is piecewise linear. As a result, the upper and lower images of the problem are polyhedral sets and the vertices of their outer approximations in Figures~\ref{3cvar1}-\ref{3cvar3} are generally dense around certain line segments.

\begin{table}[H]
  \centering
  \begin{tabular}{llrrr}\hline \hline 
         & $\epsilon$  &  $\#$opt. & $\#$vert. & time \\ \hline\hline 
    \multirow{3}[0]{*}{Primal Algorithm} & $10^{-2}$ & 21   & 9     & 16856.84 \\
         & $10^{-3}$ & 82   & 32  & 67555.09 \\
         & $10^{-4}$ & 468  & 162   & 319862.68 \\  \hline
    \multirow{3}[0]{*}{Dual Algorithm} & $10^{-2}$ & 24   & 11    & 20303.43 \\
         & $10^{-3}$ & 98   & 36   & 79397.49 \\
         & $10^{-4}$ & 448  & 152  & 249081.86 \\  \hline
    \end{tabular}%
  \caption{Computational results for the three-dimensional multivariate CVaR}\label{table3cvar}
  \end{table}%

\begin{figure}[H]
 \centering
   \begin{subfigure}[b]{1\textwidth}
   \includegraphics[width=1\linewidth,height=0.3\textheight,keepaspectratio]{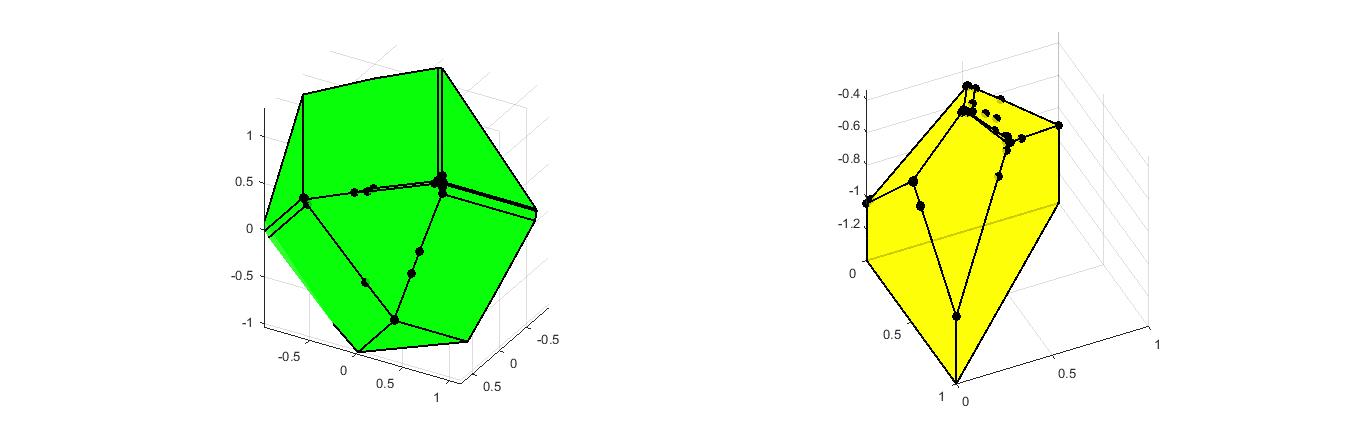}
\caption*{ Upper image\hspace{5.5cm} Lower image}
    \caption{Primal algorithm}
\end{subfigure}

   \begin{subfigure}[b]{1\textwidth}
   \includegraphics[width=1\linewidth,height=0.3\textheight,keepaspectratio]{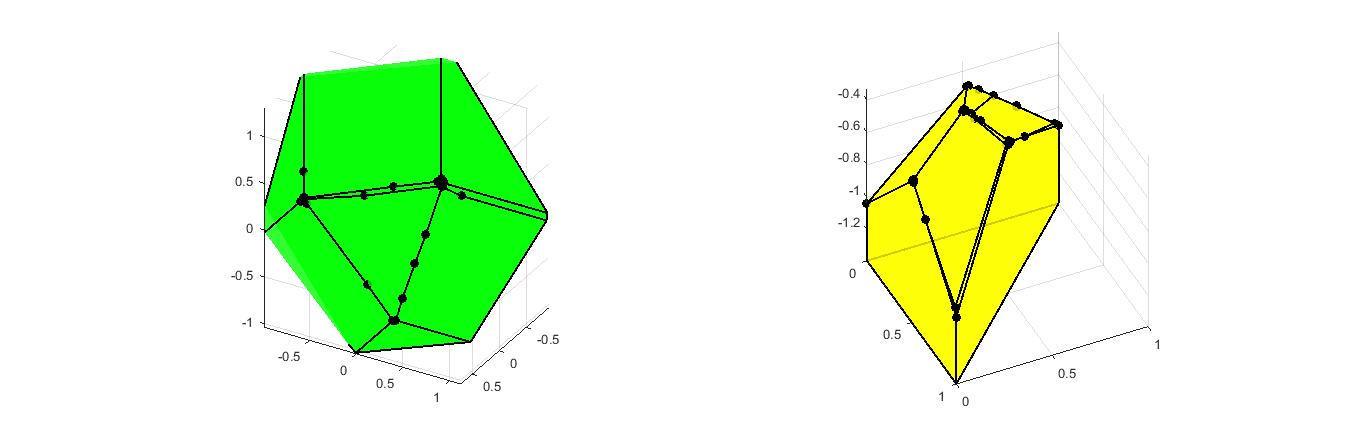}
\caption*{ Upper image\hspace{5.5cm} Lower image}
    \caption{Dual algorithm}
\end{subfigure}
\caption[]{Outer approximations obtained by the primal and dual algorithms for $I=250$ and $\epsilon=10^{-2}$}\label{3cvar1}
\end{figure}

\begin{figure}[H]
 \centering
\begin{subfigure}[b]{1\textwidth}
   \includegraphics[width=1\linewidth,height=0.3\textheight,keepaspectratio]{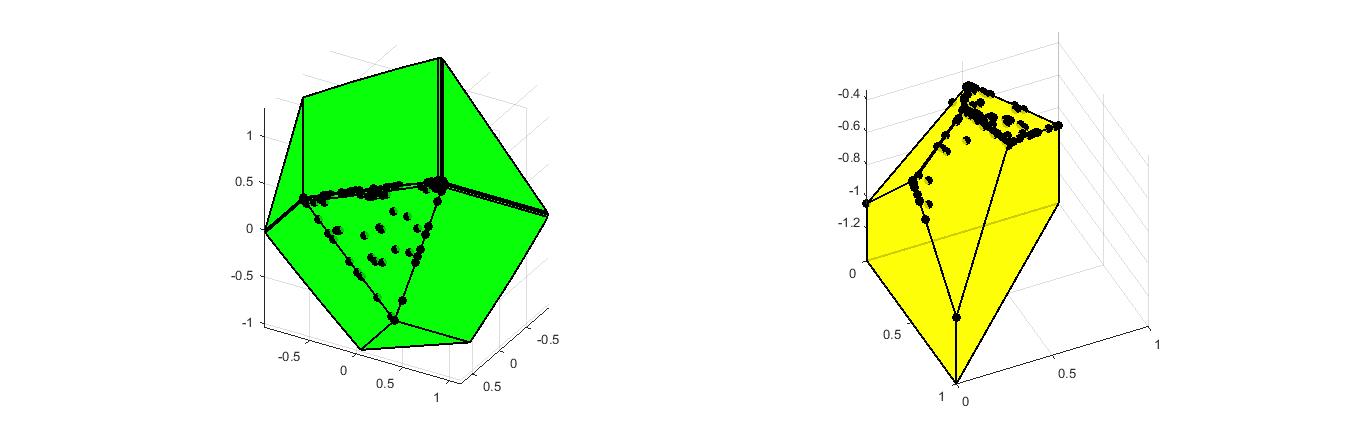}
\caption*{ Upper image\hspace{5.5cm} Lower image}
    \caption{Primal algorithm}
\end{subfigure}

\begin{subfigure}[b]{1\textwidth}
   \includegraphics[width=1\linewidth,height=0.3\textheight,keepaspectratio]{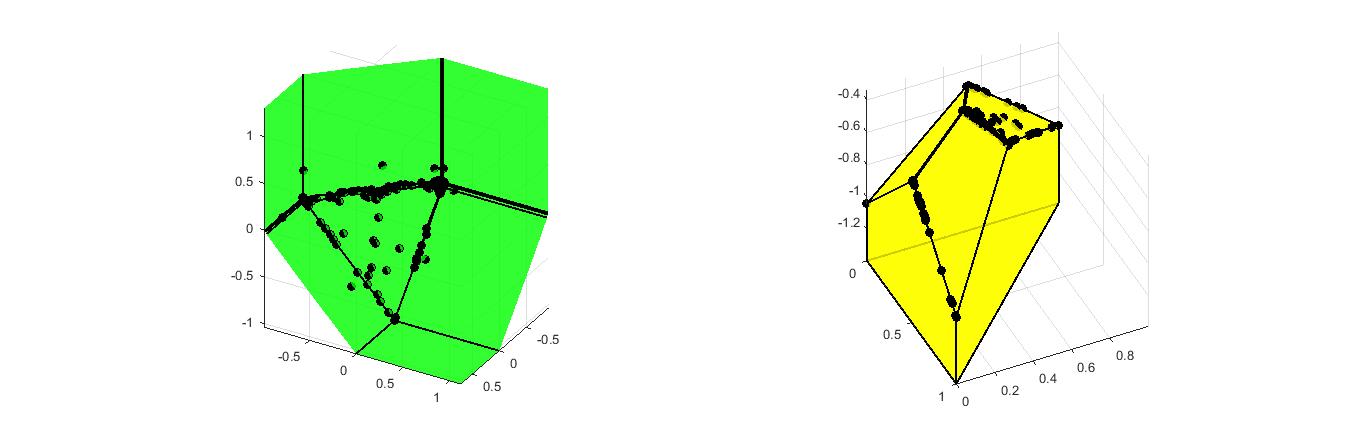}
\caption*{ Upper image\hspace{5.5cm} Lower image}
    \caption{Dual algorithm}
\end{subfigure}
\caption[]{Outer approximations obtained by the primal and dual algorithms for $I=250$ and $\epsilon=10^{-3}$}\label{3cvar2}
\end{figure}

\begin{figure}[H]
 \centering
\begin{subfigure}[b]{1\textwidth}
   \includegraphics[width=1\linewidth,height=0.3\textheight,keepaspectratio]{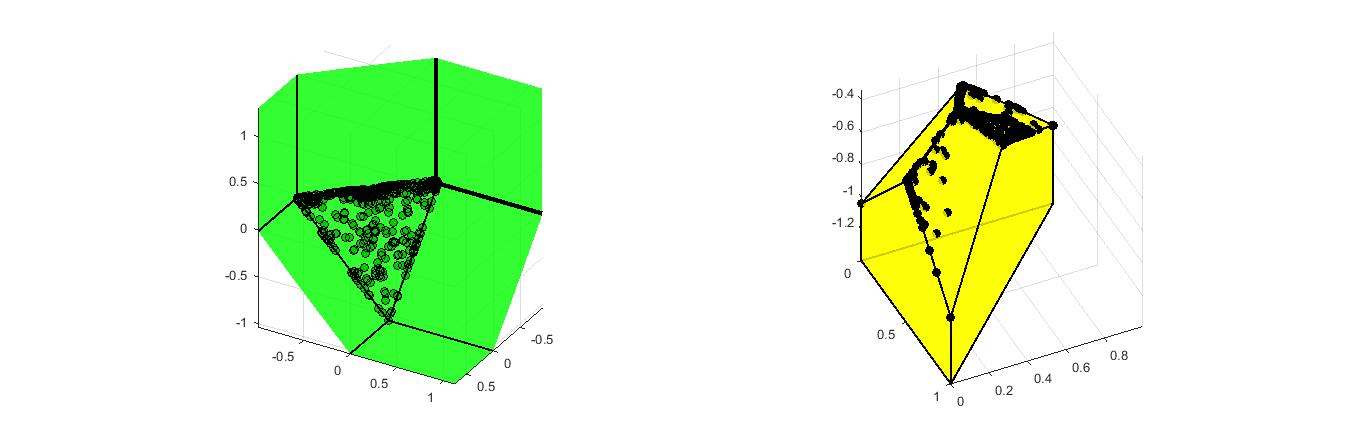}
\caption*{ Upper image\hspace{5.5cm} Lower image}
   \caption{Primal algorithm}
\end{subfigure}

\begin{subfigure}[b]{1\textwidth}
   \includegraphics[width=1\linewidth,height=0.3\textheight,keepaspectratio]{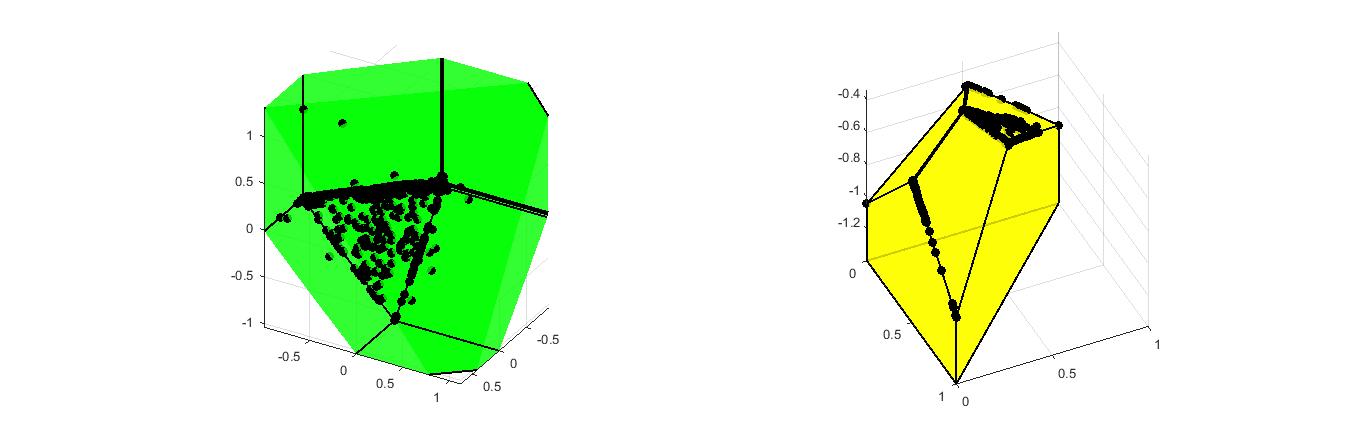}
\caption*{ Upper image\hspace{5.5cm} Lower image}
   \caption{Dual algorithm}
\end{subfigure}
\caption[]{Outer approximations obtained by the primal and dual algorithms for $I=250$ and $\epsilon=10^{-4}$}\label{3cvar3}
\end{figure}
\end{example}

\begin{example}
(Three-dimensional multivariate entropic risk measure)
We consider $J=3$ assets under $I=100$ scenarios. The parameters of the multivariate entropic risk measure are chosen as $\delta^1=\delta^2=\delta^3=0.1$ and the cone $C$ is generated by the vectors $(1,2,3), (3,2,1)$. We use error parameter values $\epsilon \in \cb{0.1, 0.05, 0.01}$. 

The computational results are reported in Table~\ref{table2ent}. We are not able to solve this problem using the primal algorithm as the dual bundle method for $(P_2(v))$ does not converge for some vertices $v$. This is in line with what is reported in \cite[Example~5.4]{convexbenson} for a four-objective problem with multivariate entropic risk measure. The results of the dual algorithm are provided in Table~\ref{table3ent} and Figure~\ref{fig:fig1}.

As the multivariate entropic risk measure is defined in terms of the exponential utility function, which is strictly convex, the upper and lower images are non-polyhedral sets. For this reason, the polyhedral outer approximations of these sets have a more uniform density of vertices over their surfaces compared to the outer approximations for the multivariate CVaR.

\begin{table}[H]
  \centering
 \begin{tabular}{llrrr}\hline\hline 
         & $\epsilon$  &  $\#$opt. & $\#$vert.  & time \\ \hline\hline 
    \multirow{3}[0]{*}{Dual Algorithm} & 0.1  & 196  & 61    & 48742.57 \\
         & 0.05 & 319  & 99   & 82237.89 \\
         & 0.01 & 670  & 211  & 168460.45 \\ \hline
    \end{tabular}%
   \caption{Computational results for the three-dimensional multivariate entropic risk measure}\label{table3ent}
\end{table}%

\begin{figure}[H]
   \centering
   \begin{subfigure}[b]{1\textwidth}
   \includegraphics[width=1\linewidth,height=0.3\textheight,keepaspectratio]{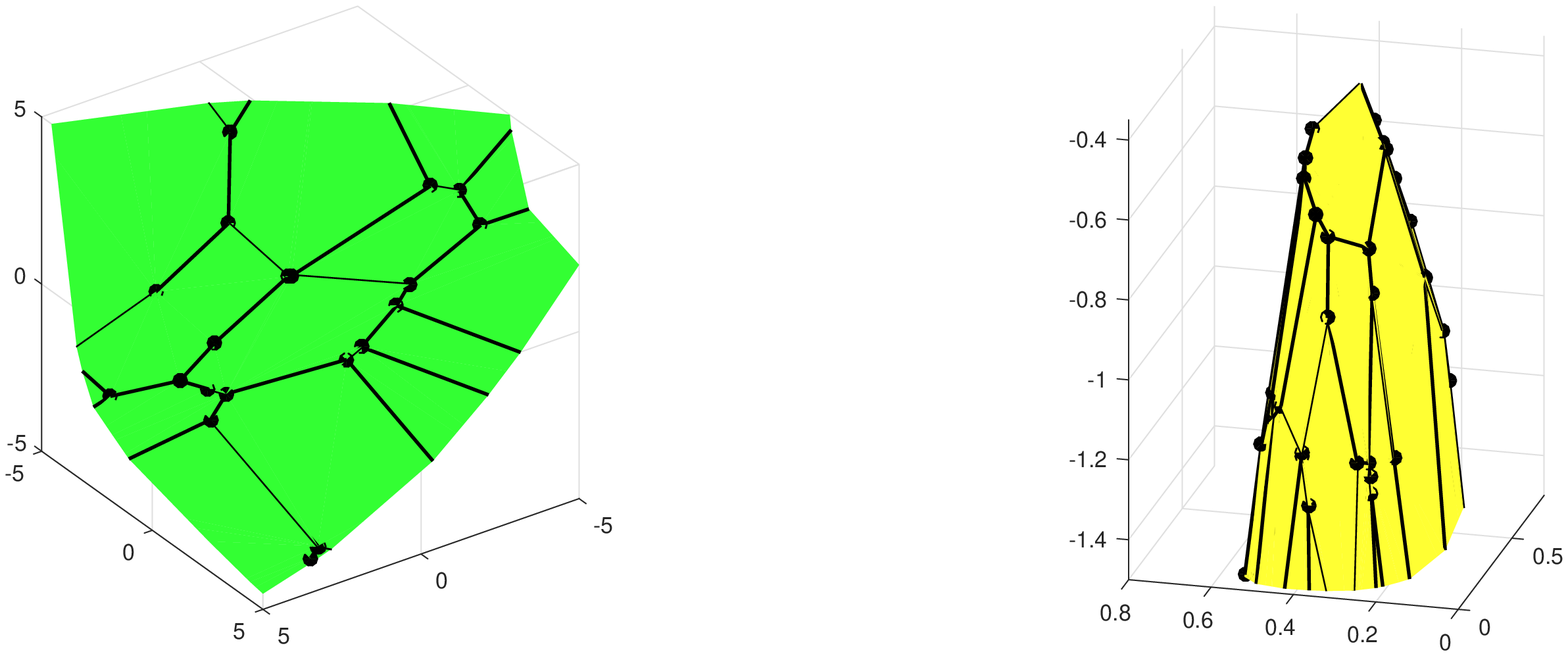}
\caption*{ Upper image\hspace{5.5cm} Lower image}
    \caption{$\epsilon=0.1$}
   \label{fig:Ng1}
\end{subfigure}

\begin{subfigure}[b]{1\textwidth}
   \includegraphics[width=1\linewidth,height=0.3\textheight,keepaspectratio]{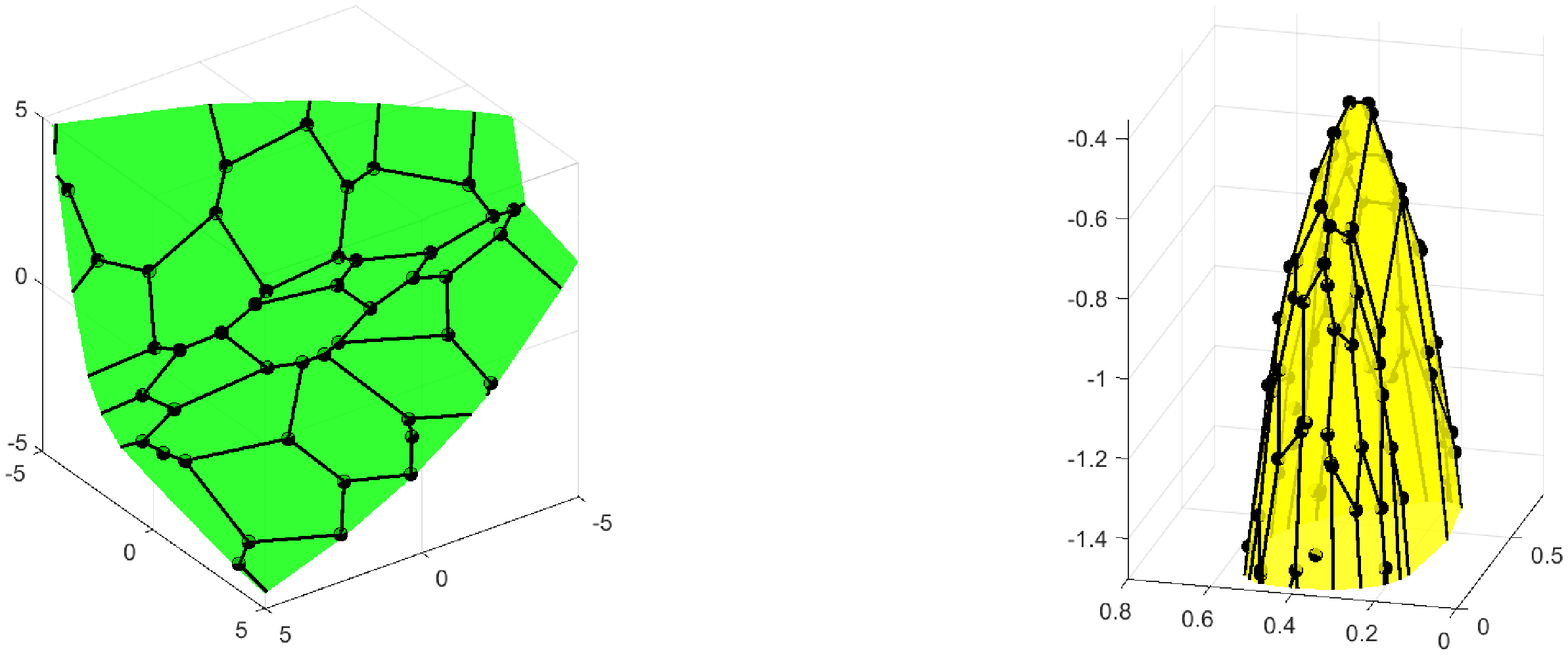}
\caption*{ Upper image\hspace{5.5cm} Lower image}
    \caption{$\epsilon=0.05$}
   \label{fig:Ng2}
\end{subfigure}

\begin{subfigure}[b]{1\textwidth}
   \includegraphics[width=1\linewidth,height=0.3\textheight,keepaspectratio]{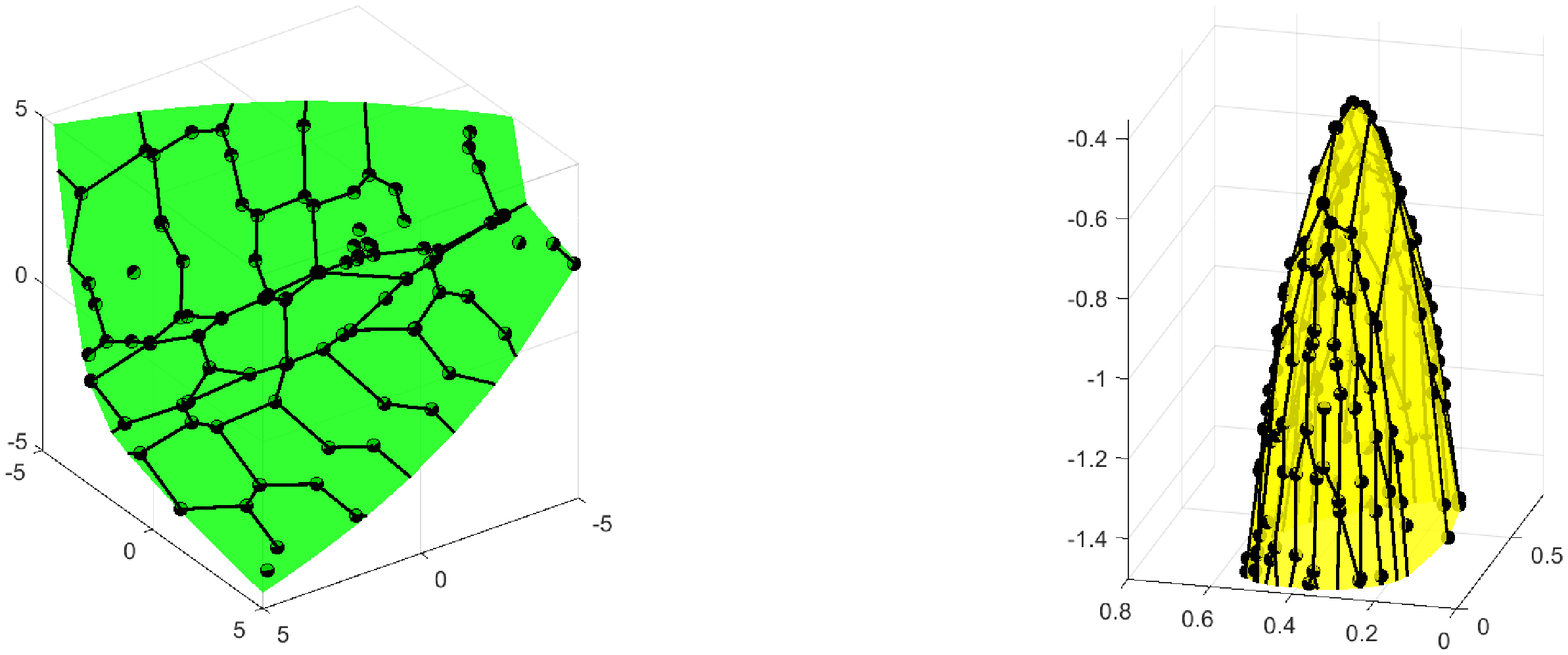}
\caption*{ Upper image\hspace{5.5cm} Lower image}
   \caption{$\epsilon=0.01$}
   \label{fig:Ng3}
\end{subfigure}

\caption[]{Outer approximation obtained by the dual algorithm for $I=100$}
\label{fig:fig1}
\end{figure}

\end{example}

\appendix

\section{Proof of Theorem~\ref{P1primal}}\label{proofof2}

In the setting of Theorem~\ref{P1primal}, let $(\mu^{(n+1)}, \lambda^{(n+1)}, \vartheta^{(n+1)}, \eta^{(n+1)})$ be an optimal solution for $(MP_1(w))$ with index set $\mathcal{L}^\prime=\cb{1,\ldots,n}$. Recall that
\begin{align*}
& F^{(k)}= \sum_{i\in\O}  f_i(\mu_i^{(k)},\lambda_i^{(k)},w)-\b(\mu^{(k)},w),\\
& \bar{F}^{(k)}=\sum_{i\in\O}  f_i(\bar{\mu}^{(k-1)}_i,\bar{\lambda}^{(k-1)}_i,w)-\b(\bar{\mu}^{(k-1)},w).
\end{align*}
Let us define
\begin{align*}
&\phi^{(n+1,k)}\coloneqq \sum_{i\in \O}\varrho \norm{\mu^{(n+1)}_i-\bar{\mu}^{(k)}_i}^2 +\sum_{i\in \O}\varrho \norm{\lambda^{(n+1)}_i-\bar{\lambda}^{(k)}_i}^2,\\
&\bar{\varepsilon}^{(n+1)}\coloneqq\varepsilon+\phi^{(n+1,k)}.
\end{align*}
Furthermore, let $\bar{x}=(\bar{x}_i)_{i\in\O}, \bar{y}=(\bar{y}_i)_{i\in\O}$ be defined as
\begin{equation}\label{xbarybar}
\bar{x}_i\coloneqq \sum_{\ell\in\mathcal{L}^\prime}\tau_i^{(\ell,n+1)}x_i^{(\ell)},\quad \bar{y}_i\coloneqq \sum_{\ell\in\mathcal{L}^\prime}\tau_i^{(\ell,n+1)}y_i^{(\ell)},
\end{equation}
for every $i\in\O$.

\begin{lemma}\label{Lagrangelemma}
The following relationships hold for $(\mu^{(n+1)}$, $\lambda^{(n+1)}$, $\vartheta^{(n+1)}$, $\eta^{(n+1)}$, $\tau^{(n+1)}$, $\theta^{(n+1)}$, $\sigma^{(n+1)}$, $\Psi^{(n+1)}$, $\nu^{(n+1)})$:
\begin{align}
&\sum_{\ell\in\mathcal{L}^\prime}\tau_i^{(\ell,n+1)}=1,\quad \forall i\in\O,\label{convexcomba}\\
&\sum_{\ell\in\mathcal{L}^\prime}\theta^{(\ell,n+1)}=1,\label{convexcombb}\\
&\varrho (\lambda_i^{(n+1)}-\bar{\lambda}^{(k)}_i)=\frac12\of{\sum_{\ell\in\mathcal{L}^\prime}\tau_i^{(\ell,n+1)}g_{\lambda_i^{(\ell)}}-\sigma^{(n+1)} p_i},\quad \forall i\in\O\label{firstorder}\\
& -2\varrho(\mu_i^{(n+1)}\negthinspace-\negthinspace\bar{\mu}^{(k)}_i)\negthinspace+\negthinspace\sum_{\ell\in\mathcal{L}^{\prime}}\tau_i^{(\ell,n+1)}g_{\mu_i^{(\ell)}}\negthinspace +\negthinspace\sum_{\ell\in\mathcal{L}^{\prime}}\theta^{(\ell,n+1)}\rho_{\mu^{(\ell)}_i}-\Psi^{(n+1)}+\nu_i^{(n+1)}=0,\, \forall i\in\O.\label{mufirstorder}
\end{align}
\end{lemma}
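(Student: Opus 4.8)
The plan is to recognize $(MP_1(w))$ as a convex program and to read off the four displayed identities as its Karush--Kuhn--Tucker stationarity conditions. The objective of $(MP_1(w))$ is concave: it is affine in $(\vartheta,\eta)$ and, thanks to the Moreau--Yosida terms $-\varrho\norm{\mu_i-\bar{\mu}_i^{(k)}}^2$ and $-\varrho\norm{\lambda_i-\bar{\lambda}_i^{(k)}}^2$, strictly concave in $(\mu,\lambda)$; moreover every constraint \eqref{grad1}--\eqref{mupos} is affine in $(\mu,\lambda,\vartheta,\eta)$. Hence $(MP_1(w))$ is a convex quadratic program with affine constraints, for which strong duality holds whenever the primal optimal value is finite --- which it is here, since $(\mu^{(n+1)},\lambda^{(n+1)},\vartheta^{(n+1)},\eta^{(n+1)})$ is an optimal solution by hypothesis (see, e.g., the optimality conditions for convex problems with affine constraints in \cite{ruszbook}). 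Consequently the primal optimal solution and the given dual optimal solution $(\tau^{(n+1)},\theta^{(n+1)},\sigma^{(n+1)},\Psi^{(n+1)},\nu^{(n+1)})$ form a saddle point of the Lagrangian.

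Next I would write out that Lagrangian explicitly, assigning $\tau_i^{(\ell)}$ to the cut constraints \eqref{grad1}, $\theta^{(\ell)}$ to \eqref{grad2}, $\sigma$ to the equality \eqref{explambda}, $\Psi$ to the equality \eqref{muprob}, and $\nu_i$ to the nonnegativity constraints \eqref{mupos}; the sign of the free multipliers $\sigma$ and $\Psi$ is chosen so that \eqref{explambda} and \eqref{muprob} enter the Lagrangian as $-\sigma^{\mathsf{T}}\sum_{i\in\O}p_i\lambda_i$ and $-\Psi^{\mathsf{T}}\of{\sum_{i\in\O}\mu_i-\1}$, respectively. Since the multipliers $\nu_i$ dualize $\mu_i\ge 0$, the resulting Lagrangian is a concave differentiable (quadratic) function of the now-unconstrained primal variables $(\mu,\lambda,\vartheta,\eta)$, and the saddle-point property says it is maximized over these variables at $(\mu^{(n+1)},\lambda^{(n+1)},\vartheta^{(n+1)},\eta^{(n+1)})$. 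Therefore its gradient there vanishes, and setting the partial derivatives equal to zero gives exactly the four claimed relations: differentiating in $\vartheta_i$ yields $1-\sum_{\ell\in\mathcal{L}^\prime}\tau_i^{(\ell,n+1)}=0$, i.e.\ \eqref{convexcomba}; differentiating in $\eta$ yields $1-\sum_{\ell\in\mathcal{L}^\prime}\theta^{(\ell,n+1)}=0$, i.e.\ \eqref{convexcombb}; differentiating in $\lambda_i$ yields $-2\varrho(\lambda_i^{(n+1)}-\bar{\lambda}_i^{(k)})+\sum_{\ell\in\mathcal{L}^\prime}\tau_i^{(\ell,n+1)}g_{\lambda_i^{(\ell)}}-p_i\sigma^{(n+1)}=0$, which rearranges to \eqref{firstorder}; and differentiating in $\mu_i$ yields \eqref{mufirstorder} verbatim.

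I expect essentially no serious obstacle: the only point that merits care is the constraint qualification needed for strong duality and for the characterization of the Lagrange multipliers. Because all constraints of $(MP_1(w))$ are affine, no Slater-type condition is required, and the multipliers attached to the two equality constraints are free in sign --- so the sign conventions adopted above are simply a bookkeeping choice that makes the formulas coincide with \eqref{firstorder}--\eqref{mufirstorder}, while $\tau$, $\theta$, $\nu$ are nonnegative by the standard complementary-slackness setup. Everything else reduces to differentiating a quadratic function.
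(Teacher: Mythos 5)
Your proposal is correct and follows essentially the same route as the paper: form the Lagrangian of $(MP_1(w))$ with the same multiplier assignments and sign conventions, invoke the primal--dual optimality (saddle-point) relationship, and read off \eqref{convexcomba}--\eqref{mufirstorder} as the stationarity conditions in $\vartheta$, $\eta$, $\lambda$ and $\mu$. Your added remark that no Slater condition is needed because all constraints are affine is a point the paper leaves implicit, but it does not change the argument.
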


\begin{proof}
The Lagrangian for $(MP_1(w))$ with centers $\bar{\mu}^{(k)},\bar{\lambda}^{(k)}$ and index set $\mathcal{L}^{\prime}$ is
\begin{align}\label{lagrangian}
&L(\mu,\lambda,\vartheta,\eta,\tau,\theta,\sigma,\Psi,\nu)\\
&=\sum_{i\in\O}\vartheta_i+\eta-\sum_{i\in\O}\varrho\norm{\mu_i-\bar{\mu}^{(k)}_i}^2-\sum_{i\in\O}\varrho\norm{\lambda_i-\bar{\lambda}^{(k)}_i}^2\notag \\
& \quad +\sum_{i\in\O}\sum_{\ell\in\mathcal{L}^{\prime}}\tau_i^{(\ell)}\of{f_i(\mu_i^{(\ell)},\lambda_i^{(\ell)},w) + g_{\mu^{(\ell)}_i}^{\mathsf{T}}(\mu_i-\mu^{(\ell)}_i) + g_{\lambda^{(\ell)}_i}^{\mathsf{T}}(\lambda_i-\lambda_i^{(\ell)})-\vartheta_i}\notag\\
&\quad +\sum_{\ell\in\mathcal{L}^{\prime}}\theta^{(\ell)}\of{-\beta(\mu^{(\ell)},w) + \sum_{i\in\O}\rho_{\mu^{(\ell)}_i}^{\mathsf{T}}(\mu_i-\mu_i^{(\ell)})-\eta}\notag \\
&\quad -\sigma^{\mathsf{T}}\sum_{i\in\O}p_i\lambda_i+\Psi^{\mathsf{T}}\of{\mathbf{1}-\sum_{i\in\O}\mu_i}+\sum_{i\in\O}\nu_i^{\mathsf{T}}\mu_i.\notag
\end{align}
The dual objective function is defined by
\[
h(\tau,\theta,\sigma,\Psi,\nu)=\sup_{\mu\in\L^J,\lambda\in\L^M,\vartheta\in\L,\eta\in\R}L(\mu,\lambda,\vartheta,\eta,\tau,\theta,\sigma,\Psi,\nu),
\]
and the dual problem is
\begin{align*}
&\text{min }\;       h(\tau,\theta,\sigma,\Psi,\nu)\;\tag{$D-MP_1(w)$}\\
&\text{s.t.}\quad \tau_i^{(\ell)}\geq 0,\theta^{(\ell)}\geq0,\sigma\in\R^M,\Psi\in\R^J,\nu_i\in\R^J_+.
\end{align*}
Note that $(\mu^{(n+1)},\lambda^{(n+1)},\vartheta^{(n+1)},\eta^{(n+1)})$ is an optimal solution for $(MP_1(w))$ with centers $\bar{\mu}^{(k)},\bar{\lambda}^{(k)}$ and index set $\mathcal{L}^{\prime}$, and
\[
\of{\tau^{(n+1)}=(\tau_i^{(\ell,n+1)})_{i\in\O,\ell\in\mathcal{L}^\prime},\theta^{(n+1)}=(\theta^{(\ell,n+1)})_{\ell\in\mathcal{L}^\prime},\sigma^{(n+1)},\Psi^{(n+1)},\nu^{(n+1)}=(\nu_i^{(n+1)})_{i\in\O}}
\]
is the corresponding optimal solution for $(D-MP_1(w))$.
The maximization of the Lagrangian over $\vartheta\in\L$ and $\eta\in\R$ gives \eqref{convexcomba} and \eqref{convexcombb}, respectively, as constraints for the dual problem. The maximization of the Lagrangian over $\lambda\in\L^M$ gives the first-order condition \eqref{firstorder}. Finally, the maximization of the Lagrangian over $\mu\in\L^J$ gives the first-order condition \eqref{mufirstorder}.
\end{proof}

\begin{lemma}\label{feasibility}
The following statements hold for every $i\in\O$:
\begin{enumerate}[(a)]
\item 
As $\varepsilon\rightarrow0$, $\bar{x}_i-\sigma^{(n+1)}\rightarrow 0$.
\item $(\bar{x}_i,\bar{y}_i)\in\mathcal{F}_i$.
\item $(\bar{x},\bar{y})$ will be an element of the set $\cb{(x,y)\in\F\mid p_i(x_i-\E\sqb{x})=0,\;\forall i\in\O}$ as $\varepsilon\rightarrow 0$ in the sense that $(\bar{x},\bar{y})\in\F$ and $p_i(\bar{x}_i-\E\sqb{\bar{x}})\rightarrow 0$ for every $i\in\O$ as $\varepsilon\rightarrow 0$.
\end{enumerate}
\end{lemma}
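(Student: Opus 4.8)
The plan is to dispatch the three claims in the order (b), (a), (c): part~(b) is a convexity observation, (a) is the analytic core, and (c) follows from (a) and (b) by averaging. For (b), observe that $(\bar x_i,\bar y_i)=\sum_{\ell\in\mathcal{L}^\prime}\tau_i^{(\ell,n+1)}(x_i^{(\ell)},y_i^{(\ell)})$ by construction, and each $(x_i^{(\ell)},y_i^{(\ell)})$ solves the subproblem in line~6 of Algorithm~\ref{decomp1}, hence lies in $\F_i$. Since $\tau_i^{(\ell,n+1)}\ge 0$ and, by \eqref{convexcomba}, $\sum_{\ell\in\mathcal{L}^\prime}\tau_i^{(\ell,n+1)}=1$, the point $(\bar x_i,\bar y_i)$ is a convex combination of elements of the polyhedron $\F_i$ and therefore stays in $\F_i$. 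This already yields the first assertion of (c), since by \eqref{f-defn} we have $\F=\{(x,y)\mid (x_i,y_i)\in\F_i\ \forall i\in\O\}$.

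For (a), I would start from the first-order condition \eqref{firstorder} of Lemma~\ref{Lagrangelemma}. The subgradient formula in line~7 of Algorithm~\ref{decomp1} gives $g_{\lambda_i^{(\ell)}}=p_i x_i^{(\ell)}$, so using $\sum_{\ell\in\mathcal{L}^\prime}\tau_i^{(\ell,n+1)}=1$ the bundle combination $\sum_{\ell\in\mathcal{L}^\prime}\tau_i^{(\ell,n+1)}g_{\lambda_i^{(\ell)}}$ collapses to $p_i\bar x_i$. Substituting this into \eqref{firstorder} and dividing by $p_i>0$ gives
\[
\bar x_i-\sigma^{(n+1)}=\frac{2\varrho}{p_i}\bigl(\lambda_i^{(n+1)}-\bar\lambda_i^{(k)}\bigr),
\]
whence $\|\bar x_i-\sigma^{(n+1)}\|\le\frac{2\sqrt\varrho}{p_i}\sqrt{\phi^{(n+1,k)}}$, because $\varrho\|\lambda_i^{(n+1)}-\bar\lambda_i^{(k)}\|^2\le\phi^{(n+1,k)}$ by the definition of $\phi^{(n+1,k)}$. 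Thus (a) reduces to the estimate $\phi^{(n+1,k)}\le\varepsilon$, and this is the main obstacle.

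To prove $\phi^{(n+1,k)}\le\varepsilon$ I would chain two standard bundle-method facts. Write $V^{(\ell)}$ for the optimal value of $(MP_1(w))$ solved at iteration $\ell$ with center $\bar\mu^{(k)},\bar\lambda^{(k)}$; note that iterations $k{+}1,\dots,n$ are null steps, so this center is unchanged over $\ell=k,\dots,n$ and in particular $\bar F^{(n+1)}=\bar F^{(k+1)}=\sum_{i\in\O}f_i(\bar\mu_i^{(k)},\bar\lambda_i^{(k)},w)-\beta(\bar\mu^{(k)},w)$. First, the objective of $(MP_1(w))$ is $2\varrho$-strongly concave in $(\mu,\lambda)$ about the center, and each cut \eqref{grad1}, \eqref{grad2} overestimates the concave function it approximates, so the cutting-plane model evaluated at the center is at least $\bar F^{(n+1)}$; comparing the master optimum with the objective value at the center then yields $\phi^{(n+1,k)}\le V^{(n)}-\bar F^{(n+1)}$. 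Second, over the null steps $k,\dots,n$ the feasible set of $(MP_1(w))$ only shrinks as cuts are appended (deleting inactive cuts does not change the optimum), so $V^{(\ell)}$ is non-increasing; hence $V^{(n)}\le V^{(k)}\le\sum_{i\in\O}\vartheta_i^{(k+1)}+\eta^{(k+1)}$, and the approximate stopping rule \eqref{approxstop} gives $V^{(n)}-\bar F^{(n+1)}\le\varepsilon$. Combining the two estimates, $\phi^{(n+1,k)}\le\varepsilon\to 0$, which proves (a). The delicate point throughout is the bundle-method index bookkeeping --- verifying that iterations $k{+}1,\dots,n$ are indeed null, that the centers coincide over this range, and that cut additions and deletions keep $V^{(\ell)}$ monotone --- which is what makes the chaining of these bounds legitimate.

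Finally, for the remaining assertion of (c): by (a), $\bar x_i-\sigma^{(n+1)}\to 0$ for every $i\in\O$ as $\varepsilon\to 0$, and since $\O$ is finite this also forces $\E[\bar x]-\sigma^{(n+1)}=\sum_{i'\in\O}p_{i'}(\bar x_{i'}-\sigma^{(n+1)})\to 0$. Writing $\bar x_i-\E[\bar x]=(\bar x_i-\sigma^{(n+1)})-(\E[\bar x]-\sigma^{(n+1)})$ then shows $\bar x_i-\E[\bar x]\to 0$, hence $p_i(\bar x_i-\E[\bar x])\to 0$, for every $i\in\O$.
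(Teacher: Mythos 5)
Your proof is correct, and parts (b) and (c) coincide with the paper's argument (convex combination of points of the polyhedron $\F_i$ for (b); the decomposition $p_i(\bar x_i-\E[\bar x])=p_i(\bar x_i-\sigma^{(n+1)})+p_i\E[\sigma^{(n+1)}-\bar x]$ for (c)). Where you genuinely diverge is the key analytic step of (a), namely why $\phi^{(n+1,k)}\rightarrow 0$. The paper disposes of this by observing that $\varepsilon\rightarrow 0$ forces $n\rightarrow\infty$ and then invoking Lemma~7.17 of \cite{ruszbook} for the convergence of the regularization term in the bundle method; you instead derive the explicit quantitative bound $\phi^{(n+1,k)}\leq\varepsilon$ from first principles, by combining (i) the $2\varrho$-strong concavity of the master objective about the center together with the fact that the cutting-plane model overestimates the true dual function at the (feasible) center, giving $\phi^{(n+1,k)}\leq V^{(n)}-\bar F^{(n+1)}$, and (ii) the monotone decrease of the master optimal value over the null steps $k,\ldots,n$ combined with the stopping rule \eqref{approxstop}, giving $V^{(n)}\leq \bar F^{(k+1)}+\varepsilon=\bar F^{(n+1)}+\varepsilon$. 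This buys a self-contained, quantitative estimate (a rate in $\varepsilon$ rather than a mere limit) at the cost of the bookkeeping you rightly flag: the argument needs all cuts $1,\ldots,n$ to be retained and the center to be frozen over the null steps, which is exactly the setting $\mathcal{L}^\prime=\cb{1,\ldots,n}$ of Theorem~\ref{P1primal}; your parenthetical remark that deleting inactive cuts is harmless is the only loose spot, since the optional cut deletion in line~20 of Algorithm~\ref{decomp1} could in principle break the monotonicity of $V^{(\ell)}$ at later iterations, so it is cleanest to simply assume no cuts are removed, as the index set in the theorem already does. Both routes are valid; the paper's is shorter but leans on an external convergence result, while yours makes the dependence on $\varepsilon$ explicit.
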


\begin{proof}
\begin{enumerate}[(a)]
\item 
We have, for every $i\in\O$,
\begin{align}\label{norminequality}
\varrho\sum_{i\in\O}\norm{\lambda^{(n+1)}_i-\bar{\lambda}_i^{(k)}}^2\leq
\phi^{(n+1,k)}.
\end{align}
As $\varepsilon\rightarrow 0$, we have $n\rightarrow\infty$ and, by Lemma~7.17 in \cite{ruszbook},
\begin{equation}\label{normconvergence}
\phi^{(n+1,k)}\rightarrow 0.
\end{equation}

Hence, by \eqref{firstorder}, \eqref{xbarybar}, \eqref{norminequality}, \eqref{normconvergence}, we get
\[
 \frac12\of{\sum_{\ell\in\mathcal{L}^\prime}\tau_i^{(\ell,n+1)}p_i x_i^{(\ell)}-\sigma^{(n+1)} p_i}=\frac12p_i (\bar{x}_i-\sigma^{(n+1)})\rightarrow 0
\]
as $\varepsilon\rightarrow 0$. Then, the claim follows.

\item By line 6 of Algorithm~\ref{decomp1}, we have $(x_i^{(\ell)},y_i^{(\ell)})\in\F_i$ for every $i\in\O$ and $\ell\in\mathcal{L}^\prime$. Since $\F_i$ is convex, $\tau^{(\ell,n+1)}_i\geq 0$ and $\sum_{\ell\in\mathcal{L}^\prime}\tau_i^{(\ell,n+1)}=1$, we have $(\bar{x}_i,\bar{y}_i)\in\F_i$ for every $i\in\O$.

\item Note that $(\bar{x},\bar{y})\in\F$ for every $\varepsilon>0$. Since $\F$ is a closed set, the limit of this point as $\varepsilon\rightarrow 0$ is also an element of $\F$. On the other hand, for every $i\in\O$, $\bar{x}_i-\sigma^{(n+1)}\rightarrow 0$ as $\varepsilon\rightarrow 0$ by part (a). Hence, $\E\sqb{\sigma^{(n+1)}-\bar{x}}=\sum_{i\in\O}p_i(\sigma^{(n+1)}-\bar{x}_{i})\rightarrow 0$ as $\varepsilon\rightarrow 0$. Therefore, for every $i\in\O$,
\[
p_i(\bar{x}_i-\E\sqb{\bar{x}})=p_i(\bar{x}_i-\sigma^{(n+1)})+p_i\E\sqb{\sigma^{(n+1)}-\bar{x}}\rightarrow 0
\]
as $\varepsilon\rightarrow 0$.
\end{enumerate}
\end{proof}

Recall that $\rho_{\mu^{(\ell)}}=(\rho_{\mu_1^{(\ell)}},\ldots,\rho_{\mu_I^{(\ell)}})$ is a subgradient of $-\beta(\cdot,w)$ at $\mu^{(\ell)}$, for each $\ell\in\mathcal{L}^{\prime}$. From \eqref{beta}, there exists some $u^{(\ell)}\in\A$, $\ell\in\mathcal{L}^{\prime}$, such that
\begin{equation}\label{u-ell}
\rho_{\mu_i^{(\ell)}}=-w\cdot u^{(\ell)}_i
\end{equation}
for all $i\in\O$. Since $\A$ is convex, $\theta^{(\ell)}\geq 0$ and $\sum_{\ell\in\mathcal{L}^{\prime}}\theta^{(\ell)}=1$, it follows that
\begin{equation}\label{ubar}
\bar{u}\coloneqq \sum_{\ell\in\mathcal{L}^{\prime}}\theta^{(\ell)}u^{(\ell)}=\of{\sum_{\ell\in\mathcal{L}^{\prime}}\theta^{(\ell)}u_1^{(\ell)},\ldots,\sum_{\ell\in\mathcal{L}^{\prime}}\theta^{(\ell)}u_I^{(\ell)}}\in\A.
\end{equation}

\begin{lemma}
The followings hold:
\begin{enumerate}[(a)]
\item For each $i\in\O$,
\begin{equation}\label{varthetaslackness}
\vartheta^{(n+1)}_i=w^{\mathsf{T}}\sqb{\mu^{(n+1)}_i \cdot (C\bar{x}_i+Q_i \bar{y}_i)}+p_i (\lambda^{(n+1)}_i)^{\mathsf{T}}\bar{x}_i.
\end{equation}
\item For each $i\in\O$,
\begin{equation}\label{approxopt}
\vartheta^{(n+1)}_i - f_i(\mu^{(n+1)}_i, \lambda^{(n+1)}_i, w)\leq\bar{\varepsilon}^{(n+1)}.
\end{equation}
Moreover,
\begin{equation}\label{approxopt0}
\sum_{i\in\O}\vartheta^{(n+1)}_i + \eta^{(n+1)} - \sum_{i\in\O}f_i(\mu^{(n+1)}_i, \lambda^{(n+1)}_i, w)+\beta(\mu^{(n+1)},w)\leq \bar{\varepsilon}^{(n+1)}.
\end{equation}
\item
\begin{equation}\label{eta-beta}
\eta^{(n+1)}+\beta(\mu^{(n+1)},w)\leq \bar{\varepsilon}^{(n+1)}.
\end{equation}
\item $-w\cdot \bar{u}$ is an $\bar{\varepsilon}^{(n+1)}$-subgradient of $-\beta(\cdot,w)$ at $\mu^{(n+1)}$ in the sense that, for every $\mu\in\M_1^J$,
\begin{equation}\label{betaapproxsbg}
-\beta(\mu,w) \leq -\beta(\mu^{(n+1)},w)+\bar{\varepsilon}^{(n+1)}-\sum_{i\in\O}w^{\mathsf{T}}(\mu_i-\mu_i^{(n+1)})\cdot \bar{u}_i.
\end{equation}
\end{enumerate}
\end{lemma}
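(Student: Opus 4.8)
The plan is to obtain (a) directly from the Karush--Kuhn--Tucker conditions of the convex quadratic master problem $(MP_1(w))$ summarized in Lemma~\ref{Lagrangelemma}, to reduce (b) and (c) to the single aggregate estimate \eqref{approxopt0} together with the observation that the two ``slacks'' $\vartheta_i^{(n+1)}-f_i(\mu_i^{(n+1)},\lambda_i^{(n+1)},w)$ and $\eta^{(n+1)}+\beta(\mu^{(n+1)},w)$ are individually nonnegative, and to deduce (d) from (c) by averaging exact subgradient inequalities with the weights $\theta^{(n+1)}$.

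For (a): the variable $\vartheta_i$ occurs in $(MP_1(w))$ only in \eqref{grad1} and with coefficient $+1$ in the objective, so at an optimum it is driven up to the pointwise minimum over $\ell\in\mathcal{L}^\prime$ of the affine right-hand sides of \eqref{grad1} evaluated at $(\mu^{(n+1)},\lambda^{(n+1)})$; equivalently, using complementary slackness for \eqref{grad1} and $\sum_{\ell\in\mathcal{L}^\prime}\tau_i^{(\ell,n+1)}=1$ from \eqref{convexcomba}, one has $\vartheta_i^{(n+1)}=\sum_{\ell\in\mathcal{L}^\prime}\tau_i^{(\ell,n+1)}\bigl(f_i(\mu_i^{(\ell)},\lambda_i^{(\ell)},w)+g_{\mu_i^{(\ell)}}^{\mathsf{T}}(\mu_i^{(n+1)}-\mu_i^{(\ell)})+g_{\lambda_i^{(\ell)}}^{\mathsf{T}}(\lambda_i^{(n+1)}-\lambda_i^{(\ell)})\bigr)$. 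Now substitute $f_i(\mu_i^{(\ell)},\lambda_i^{(\ell)},w)=w^{\mathsf{T}}[\mu_i^{(\ell)}\cdot(Cx_i^{(\ell)}+Q_iy_i^{(\ell)})]+p_i(\lambda_i^{(\ell)})^{\mathsf{T}}x_i^{(\ell)}$ (because $(x_i^{(\ell)},y_i^{(\ell)})$ is optimal for the subproblem), $g_{\mu_i^{(\ell)}}=w\cdot(Cx_i^{(\ell)}+Q_iy_i^{(\ell)})$ and $g_{\lambda_i^{(\ell)}}=p_ix_i^{(\ell)}$ (see Algorithm~\ref{decomp1}), and use the identity $(w\cdot a)^{\mathsf{T}}b=w^{\mathsf{T}}(a\cdot b)$: the $\ell$-th summand collapses to $w^{\mathsf{T}}[\mu_i^{(n+1)}\cdot(Cx_i^{(\ell)}+Q_iy_i^{(\ell)})]+p_i(\lambda_i^{(n+1)})^{\mathsf{T}}x_i^{(\ell)}$, and forming the $\tau^{(n+1)}$-convex combination together with \eqref{xbarybar} yields \eqref{varthetaslackness}.

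The core of the lemma is \eqref{approxopt0}. Since the optimal value of $(MP_1(w))$ at iteration $n$ equals $\sum_{i\in\O}\vartheta_i^{(n+1)}+\eta^{(n+1)}-\phi^{(n+1,k)}$, the estimate \eqref{approxopt0} is equivalent to this value exceeding $F^{(n+1)}:=\sum_{i\in\O}f_i(\mu_i^{(n+1)},\lambda_i^{(n+1)},w)-\beta(\mu^{(n+1)},w)$ by at most $\varepsilon$. I would argue in two steps. First, because $n+1$ is the \emph{first} descent iteration after the approximate stopping condition was met, no center update occurs during iterations $k,\dots,n$, so $\bar F^{(n+1)}=\bar F^{(k+1)}$; moreover, the cut index set only grows from the one used at iteration $k$ to $\mathcal{L}^\prime$, so with these fixed centers the feasible set of $(MP_1(w))$ only shrinks and its optimal value only decreases. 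Combining this monotonicity with \eqref{approxstop} at iteration $k$ and $\phi^{(k+1,k)}\ge 0$ gives $\sum_{i\in\O}\vartheta_i^{(n+1)}+\eta^{(n+1)}-\bar F^{(n+1)}\le\varepsilon+\phi^{(n+1,k)}=\bar\varepsilon^{(n+1)}$. Second, feasibility of the center for $(MP_1(w))$ (where $\phi$ vanishes) and the fact that the cutting planes dominate $f_i$ and $-\beta$ give $\sum_{i\in\O}\vartheta_i^{(n+1)}+\eta^{(n+1)}\ge\bar F^{(n+1)}$, so the descent test passed at iteration $n+1$ forces $F^{(n+1)}\ge\bar F^{(n+1)}$; replacing $\bar F^{(n+1)}$ by the smaller $F^{(n+1)}$ in the previous display proves \eqref{approxopt0}. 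Finally, since $\vartheta_i^{(n+1)}\ge f_i(\mu_i^{(n+1)},\lambda_i^{(n+1)},w)$ and $\eta^{(n+1)}\ge-\beta(\mu^{(n+1)},w)$ (each cut is an upper bound for the respective concave function), the left-hand side of \eqref{approxopt0} is a sum of nonnegative terms, so each of them is at most $\bar\varepsilon^{(n+1)}$; this is \eqref{approxopt} and \eqref{eta-beta}. I expect the verification that the centers are frozen and that the cut index sets are nested over iterations $k,\dots,n$ — i.e., the precise bookkeeping tying the stopping iteration $k$ to the descent iteration $n+1$ — to be the main obstacle.

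For (d): start from the exact subgradient inequalities $-\beta(\mu,w)\le-\beta(\mu^{(\ell)},w)+\sum_{i\in\O}\rho_{\mu_i^{(\ell)}}^{\mathsf{T}}(\mu_i-\mu_i^{(\ell)})$, valid for all $\mu\in\M_1^J$ and $\ell\in\mathcal{L}^\prime$, take their $\theta^{(n+1)}$-convex combination (using \eqref{convexcombb}), split $\mu_i-\mu_i^{(\ell)}=(\mu_i^{(n+1)}-\mu_i^{(\ell)})+(\mu_i-\mu_i^{(n+1)})$, and recognize the first group of terms as $\eta^{(n+1)}$ by complementary slackness for \eqref{grad2}. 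With \eqref{u-ell} and the definition \eqref{ubar} of $\bar u$, the remaining terms become $-\sum_{i\in\O}(w\cdot\bar u_i)^{\mathsf{T}}(\mu_i-\mu_i^{(n+1)})=-\sum_{i\in\O}w^{\mathsf{T}}\bigl((\mu_i-\mu_i^{(n+1)})\cdot\bar u_i\bigr)$; substituting the bound \eqref{eta-beta} for $\eta^{(n+1)}$ then gives \eqref{betaapproxsbg}.
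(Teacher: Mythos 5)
Your proposal is correct and follows essentially the same route as the paper: (a) via complementary slackness for \eqref{grad1} together with \eqref{convexcomba} and the explicit subgradients, (b)--(c) via the chain combining the stopping condition \eqref{approxstop}, the monotonicity of the master problem's optimal value in the cut index set with frozen centers, the descent test at iteration $n+1$, and the nonnegativity of the individual slacks, and (d) via complementary slackness for \eqref{grad2} and the bound \eqref{eta-beta}. The only (harmless) variations are that you justify $F^{(n+1)}\geq\bar F^{(n+1)}$ slightly more carefully than the paper by first noting $\sum_{i\in\O}\vartheta_i^{(n+1)}+\eta^{(n+1)}\geq\bar F^{(n+1)}$, and in (d) you average the exact cut inequalities rather than invoking $-\beta(\mu,w)\leq -w^{\mathsf{T}}\E^{\mu}\sqb{\bar u}$ directly from $\bar u\in\A$ and \eqref{beta}; both lead to the same computation.
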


\begin{proof}
Consider $(MP_1(w))$ with centers $\bar{\mu}^{(k)},\bar{\lambda}^{(k)}$ and index set $\mathcal{L}^{\prime}$.
\begin{enumerate}[(a)]
\item Note that constraint \eqref{grad1} can be rewritten as
\[
\vartheta_i \leq w^{\mathsf{T}}\sqb{\mu_i \cdot (Cx_i^{(\ell)}+Q_i y_i^{(\ell)})}+p_i \lambda_i^{\mathsf{T}}x^{(\ell)}_i, \quad \forall i\in\O,  \ell\in\mathcal{L}^\prime
\]
since
\[
f_i(\mu_i^{(\ell)},\lambda_i^{(\ell)},w)=w^{\mathsf{T}}\sqb{\mu^{(\ell)}_i \cdot (Cx_i^{(\ell)}+Q_i y_i^{(\ell)})}+p_i (\lambda^{(\ell)}_i)^{\mathsf{T}}x^{(\ell)}_i
\]
and $g_{\mu^{(\ell)}_i}=w\cdot(Cx_i^{(\ell)}+Q_iy_i^{(\ell)})$, $g_{\lambda_i^{(\ell)}} = p_ix_i^{(\ell)}$. From the complementary slackness conditions for constraint \eqref{grad1} and using \eqref{xbarybar}, \eqref{convexcomba}, we get
\begin{align*}
\vartheta^{(n+1)}_i &=\sum_{\ell\in\mathcal{L}^\prime}\vartheta^{(n+1)}_i\tau^{(\ell,n+1)}_i\\
&= \sum_{\ell\in\mathcal{L}^\prime}\of{w^{\mathsf{T}}\sqb{\mu^{(n+1)}_i \cdot (Cx_i^{(\ell)}+Q_i y_i^{(\ell)})}+p_i (\lambda^{(n+1)}_i)^{\mathsf{T}}x^{(\ell)}_i}\tau^{(\ell,n+1)}_i\notag \\
&=w^{\mathsf{T}}\sqb{\mu^{(n+1)}_i \cdot (C\bar{x}_i+Q_i \bar{y}_i)}+p_i (\lambda^{(n+1)}_i)^{\mathsf{T}}\bar{x}_i.\notag
\end{align*}
Hence, \eqref{varthetaslackness} follows.

\item Note that
\begin{align*}
&\vartheta^{(n+1)}_i - f_i(\mu^{(n+1)}_i, \lambda^{(n+1)}_i, w)-\phi^{(n+1,k)}\\
&\leq \sum_{i\in\O}\vartheta^{(n+1)}_i + \eta^{(n+1)} - \sum_{i\in\O}f_i(\mu^{(n+1)}_i, \lambda^{(n+1)}_i, w)+\beta(\mu^{(n+1)},w)-\phi^{(n+1,k)} \\
&\leq \sum_{i\in\O}\vartheta^{(n+1)}_i + \eta^{(n+1)} - \sum_{i\in\O}f_i(\bar{\mu}^{(k)}_i, \bar{\lambda}^{(k)}_i, w)+\beta(\bar{\mu}^{(k)},w)-\phi^{(n+1,k)}\\
&\leq \sum_{i\in\O}\vartheta^{(k+1)}_i + \eta^{(k+1)} - \sum_{i\in\O}f_i(\bar{\mu}^{(k)}_i, \bar{\lambda}^{(k)}_i, w)+\beta(\bar{\mu}^{(k)},w)-\phi^{(k+1,k)}\\
&\leq \varepsilon.
\end{align*}
Here, the first inequality follows since $\vartheta_i^{(n+1)}\geq f_i(\mu_i^{(n+1)},\lambda_i^{(n+1)},w)$ for each $i\in\O$ and $\eta^{(n+1)}\geq -\beta(\mu^{(n+1)},w)$, the second inequality follows since
\begin{align*}
&F^{(n+1)}=\sum_{i\in\O}f_i(\mu^{(n+1)}_i, \lambda^{(n+1)}_i, w)-\beta(\mu^{(n+1)},w)\\
&\geq \bar{F}^{(k+1)}=\sum_{i\in\O}f_i(\bar{\mu}^{(k)}_i, \bar{\lambda}^{(k)}_i, w)-\beta(\bar{\mu}^{(k)},w)
\end{align*}
due to the center update rule in line~14 of Algorithm~\ref{decomp1}, the third inequality follows since the master problem with index set $\mathcal{L}=\cb{1,\ldots,k}$ and center $(\bar{\mu}^{(k)},\bar{\lambda}^{(k)})$ has a smaller optimal value than the master problem with index set $\mathcal{L}^{\prime}=\cb{1,\ldots,n}$ and center $(\bar{\mu}^{(k)},\bar{\lambda}^{(k)})$. Finally, the last inequality is by the approximate stopping condition \eqref{approxstop}.
Hence, \eqref{approxopt} and \eqref{approxopt0} follow.

\item Similar to part (b), we can show that \eqref{eta-beta} holds.

\item Note that constraint \eqref{grad2} can be rewritten as
\[
\eta \leq \sum_{i\in\O}\rho_{\mu^{(\ell)}_i}^{\mathsf{T}}\mu_i, \quad \forall \ell\in\mathcal{L}^{\prime}
\]
since $-\beta(\mu^{(\ell)},w)=\sum_{i\in\O}\rho_{\mu^{(\ell)}_i}^{\mathsf{T}}\mu_i^{(\ell)}$ by the definition of subgradient. From the complementary slackness conditions,
\begin{equation}\label{etam}
\eta^{(n+1)}=\sum_{\ell\in\mathcal{L}^{\prime}}\theta^{(\ell)}\sum_{i\in\O}\rho_{\mu^{(\ell)}_i}^{\mathsf{T}}\mu^{(n+1)}_i=\sum_{i\in\O}\mu^{(n+1)}_i\sum_{\ell\in\mathcal{L}^{\prime}}\theta^{(\ell)}\rho_{\mu^{(\ell)}_i}^{\mathsf{T}}.
\end{equation}
From \eqref{u-ell}, \eqref{ubar}, \eqref{etam}, it follows that
\begin{equation}\label{etaslackness}
\eta^{(n+1)} = -w^{\mathsf{T}}\sum_{i\in\O}\mu^{(n+1)}_i\cdot \bar{u}_i=-w^{\mathsf{T}}\E^{\mu^{(n+1)}}\sqb{\bar{u}}.
\end{equation}
For every $\mu\in\M_1^J$,
\begin{align*}
-\beta(\mu,w)&\leq -w^{\mathsf{T}}\E^{\mu}\sqb{\bar{u}}\\
&=-w^{\mathsf{T}}\E^{\mu^{(n+1)}}\sqb{\bar{u}}+w^{\mathsf{T}}\of{\E^{\mu^{(n+1)}}\sqb{\bar{u}}-\E^{\mu}\sqb{\bar{u}}}\\ &=-w^{\mathsf{T}}\E^{\mu^{(n+1)}}\sqb{\bar{u}}-\sum_{i\in\O}w^{\mathsf{T}}(\mu_i-\mu_i^{(n+1)})\cdot \bar{u}_i \notag \\
& \leq -\beta(\mu^{(n+1)},w)+\bar{\varepsilon}^{(n+1)}-\sum_{i\in\O}w^{\mathsf{T}}(\mu_i-\mu_i^{(n+1)})\cdot \bar{u}_i,
\end{align*}
where the first inequality follows from \eqref{beta}, and the last inequality follows from \eqref{eta-beta} and \eqref{etaslackness}. Hence, the claim follows.
\end{enumerate}
\end{proof}

\begin{lemma}\label{optimality}
The followings hold:
\begin{enumerate}[(a)]
\item As $\varepsilon\rightarrow0$,
\begin{align*}
&w^{\mathsf{T}}\E^{\mu^{(n+1)}}\sqb{C\bar{x}+Q\bar{y}}-\beta(\mu^{(n+1)},w)\rightarrow\mathscr{P}_1(w),\\ &\sum_{i\in\O}f_i(\mu_i^{(n+1)},\lambda_i^{(n+1)},w)-\beta(\mu^{(n+1)},w)\rightarrow\mathscr{P}_1(w).
\end{align*}
\item Let $z_{(w)}$ be a minimizer of the problem
\[
\inf_{z\in R(C\bar{x}+Q\bar{y})}w^{\mathsf{T}}z.
\]
Then, $w^{\mathsf{T}}z_{(w)}\rightarrow\mathscr{P}_1(w)$ as $\varepsilon\rightarrow 0$.
\end{enumerate}
\end{lemma}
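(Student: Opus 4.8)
The plan is to prove part~(a) by showing that the three expressions $\sum_{i\in\O}\vartheta_i^{(n+1)}+\eta^{(n+1)}$, $w^{\mathsf{T}}\E^{\mu^{(n+1)}}\sqb{C\bar{x}+Q\bar{y}}-\beta(\mu^{(n+1)},w)$ and $F^{(n+1)}\coloneqq\sum_{i\in\O}f_i(\mu_i^{(n+1)},\lambda_i^{(n+1)},w)-\beta(\mu^{(n+1)},w)$ coincide up to $o(1)$ as $\varepsilon\to0$, and that $F^{(n+1)}\to\mathscr{P}_1(w)$; part~(b) will then follow from the identity $w^{\mathsf{T}}z_{(w)}=\varphi_w(C\bar{x}+Q\bar{y})$. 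First I would sum \eqref{varthetaslackness} over $i\in\O$, getting $\sum_{i\in\O}\vartheta_i^{(n+1)}=w^{\mathsf{T}}\E^{\mu^{(n+1)}}\sqb{C\bar{x}+Q\bar{y}}+\sum_{i\in\O}p_i(\lambda_i^{(n+1)})^{\mathsf{T}}(\bar{x}_i-\E\sqb{\bar{x}})$ (using $\E\sqb{\lambda^{(n+1)}}=0$ to insert $\E\sqb{\bar{x}}$); this last sum vanishes in the limit since $\bar{x}_i-\E\sqb{\bar{x}}\to0$ by Lemma~\ref{feasibility}(c) (recall $p_i>0$) while $\lambda^{(n+1)}$ stays bounded --- by \eqref{firstorder} and \eqref{xbarybar}, $\lambda_i^{(n+1)}-\bar{\lambda}_i^{(k)}=\tfrac{p_i}{2\varrho}(\bar{x}_i-\sigma^{(n+1)})\to0$ (Lemma~\ref{feasibility}(a)) and the centers $\bar{\lambda}^{(k)}$ converge. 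Also \eqref{etaslackness} gives $-\eta^{(n+1)}=w^{\mathsf{T}}\E^{\mu^{(n+1)}}\sqb{\bar{u}}\leq\beta(\mu^{(n+1)},w)$, since $\bar{u}\in\A$ by \eqref{ubar} and \eqref{beta}; together with \eqref{eta-beta} this yields $0\leq\eta^{(n+1)}+\beta(\mu^{(n+1)},w)\leq\bar{\varepsilon}^{(n+1)}=\varepsilon+\phi^{(n+1,k)}\to0$ by \eqref{normconvergence}. Combining these facts, $\sum_{i\in\O}\vartheta_i^{(n+1)}+\eta^{(n+1)}=w^{\mathsf{T}}\E^{\mu^{(n+1)}}\sqb{C\bar{x}+Q\bar{y}}-\beta(\mu^{(n+1)},w)+o(1)$, and by \eqref{approxopt0} this equals $F^{(n+1)}+o(1)$.

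It then remains to show $F^{(n+1)}\to\mathscr{P}_1(w)$. Since $\mu^{(n+1)}\in\M_1^J$ and $\E\sqb{\lambda^{(n+1)}}=0$ by the constraints \eqref{explambda}--\eqref{mupos} of $(MP_1(w))$, the pair $(\mu^{(n+1)},\lambda^{(n+1)})$ is feasible for $(D_1(w))$, so $F^{(n+1)}\leq\mathscr{P}_1(w)$ by Theorem~\ref{p1thm}. For the matching lower bound I would invoke the convergence of the dual bundle method: by \cite[Theorem~7.16]{ruszbook} the centers $(\bar{\mu}^{(k)},\bar{\lambda}^{(k)})$ converge as $k\to\infty$ to an optimal solution $(\mu^\ast,\lambda^\ast)$ of $(D_1(w))$; since $\varepsilon\to0$ forces $k\to\infty$ and, by \eqref{normconvergence}, $\mu^{(n+1)}-\bar{\mu}^{(k)}\to0$ and $\lambda^{(n+1)}-\bar{\lambda}^{(k)}\to0$, we get $\mu^{(n+1)}\to\mu^\ast$, $\lambda^{(n+1)}\to\lambda^\ast$, and hence $F^{(n+1)}\to\sum_{i\in\O}f_i(\mu_i^\ast,\lambda_i^\ast,w)-\beta(\mu^\ast,w)=\mathscr{P}_1(w)$, using that each $f_i(\cdot,\cdot,w)$ is finite and concave, hence continuous, and that $\beta(\cdot,w)$ is continuous on the compact region in which the dual iterates stay. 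Alternatively, one has $\sum_{i\in\O}\vartheta_i^{(n+1)}+\eta^{(n+1)}\geq F^{(n+1)}$ --- because $\vartheta_i^{(n+1)}\geq f_i(\mu_i^{(n+1)},\lambda_i^{(n+1)},w)$ by \eqref{varthetaslackness} together with $(\bar{x}_i,\bar{y}_i)\in\F_i$ (Lemma~\ref{feasibility}(b)), and $\eta^{(n+1)}\geq-\beta(\mu^{(n+1)},w)$ --- so the descent test at iteration $n+1$ forces $F^{(n+1)}\geq\bar{F}^{(n+1)}$, and the center values $\bar{F}^{(k)}$ increase monotonically to $\mathscr{P}_1(w)$.

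For part~(b), \eqref{phi} and the choice of $z_{(w)}$ give $w^{\mathsf{T}}z_{(w)}=\varphi_w(C\bar{x}+Q\bar{y})$, and \eqref{scalar-dual} gives $\varphi_w(C\bar{x}+Q\bar{y})\geq w^{\mathsf{T}}\E^{\mu^{(n+1)}}\sqb{C\bar{x}+Q\bar{y}}-\beta(\mu^{(n+1)},w)$, whence $\liminf_{\varepsilon\to0}w^{\mathsf{T}}z_{(w)}\geq\mathscr{P}_1(w)$ by part~(a). For the reverse, I would prove that $\mu^{(n+1)}$ is an approximate maximizer in \eqref{scalar-dual} at $u=C\bar{x}+Q\bar{y}$: fixing $\mu\in\M_1^J$ and applying \eqref{betaapproxsbg}, the excess of $w^{\mathsf{T}}\E^{\mu}\sqb{C\bar{x}+Q\bar{y}}-\beta(\mu,w)$ over $w^{\mathsf{T}}\E^{\mu^{(n+1)}}\sqb{C\bar{x}+Q\bar{y}}-\beta(\mu^{(n+1)},w)$ is at most $\bar{\varepsilon}^{(n+1)}+\sum_{i\in\O}(\mu_i-\mu_i^{(n+1)})^{\mathsf{T}}\of{w\cdot(C\bar{x}_i+Q_i\bar{y}_i-\bar{u}_i)}$; substituting $\sum_{\ell\in\mathcal{L}^\prime}\tau_i^{(\ell,n+1)}g_{\mu_i^{(\ell)}}=w\cdot(C\bar{x}_i+Q_i\bar{y}_i)$ and $\sum_{\ell\in\mathcal{L}^\prime}\theta^{(\ell,n+1)}\rho_{\mu_i^{(\ell)}}=-w\cdot\bar{u}_i$ (see \eqref{u-ell}, \eqref{ubar}) into \eqref{mufirstorder}, this sum becomes $\sum_{i\in\O}(\mu_i-\mu_i^{(n+1)})^{\mathsf{T}}\of{\Psi^{(n+1)}-\nu_i^{(n+1)}+2\varrho(\mu_i^{(n+1)}-\bar{\mu}_i^{(k)})}$, which using $\sum_{i\in\O}\mu_i=\sum_{i\in\O}\mu_i^{(n+1)}=\1$, $\nu_i^{(n+1)}\geq0$ with $(\nu_i^{(n+1)})^{\mathsf{T}}\mu_i^{(n+1)}=0$, and $\mu_i\geq0$, is bounded above by $2\varrho\sum_{i\in\O}(\mu_i-\mu_i^{(n+1)})^{\mathsf{T}}(\mu_i^{(n+1)}-\bar{\mu}_i^{(k)})$, which is $o(1)$ uniformly in $\mu$ since $\|\mu_i^{(n+1)}-\bar{\mu}_i^{(k)}\|\to0$ by \eqref{normconvergence} and $\M_1^J$ is compact. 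Taking the supremum over $\mu\in\M_1^J$ gives $\varphi_w(C\bar{x}+Q\bar{y})\leq w^{\mathsf{T}}\E^{\mu^{(n+1)}}\sqb{C\bar{x}+Q\bar{y}}-\beta(\mu^{(n+1)},w)+o(1)\to\mathscr{P}_1(w)$ by part~(a), so $w^{\mathsf{T}}z_{(w)}\to\mathscr{P}_1(w)$; this also upgrades the first display in part~(a) to a genuine limit.

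The step I expect to be the main obstacle is the lower bound $\liminf_{\varepsilon\to0}F^{(n+1)}\geq\mathscr{P}_1(w)$ of the second paragraph: this is the only place where one genuinely needs the convergence guarantee for the bundle method rather than the algebraic identities above, and some care is required because the penalty function $\beta(\cdot,w)$ is in general only lower semicontinuous --- for the multivariate CVaR of Example~\ref{multiCVaR} it equals $+\infty$ outside a cone --- so one must verify that the dual iterates remain in a compact set on which $\beta(\cdot,w)$ is continuous (which holds for the risk measures of Examples~\ref{multiCVaR} and~\ref{multient}), or else argue through the descent test and the monotone convergence of the center values $\bar{F}^{(k)}$.
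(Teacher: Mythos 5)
Your argument is correct, and part (a) is essentially the paper's proof: the paper also starts from \eqref{varthetaslackness}, kills the term $\sum_{i\in\O}p_i(\lambda_i^{(n+1)})^{\mathsf{T}}\bar{x}_i$ (it centers at $\sigma^{(n+1)}$ via Lemma~\ref{feasibility}(a) and a uniform bound $|(\lambda_i^{(n+1)})^j|\le\tilde{C}$ quoted from the proof of Theorem~7.16 in \cite{ruszbook}, where you center at $\E\sqb{\bar{x}}$ via Lemma~\ref{feasibility}(c); both work), and then gets $F^{(n+1)}\rightarrow\P_1(w)$ by combining \eqref{approxopt0} with Lemma~7.17 of \cite{ruszbook}, which states directly that the master-problem value $\sum_{i\in\O}\vartheta_i^{(n+1)}+\eta^{(n+1)}$ converges to the optimal value of $(D_1(w))$. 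That citation is exactly your ``alternative'' route (sandwich between the master value and the center values), and it sidesteps the continuity worry you raise about your primary route through convergence of the iterates $(\mu^{(n+1)},\lambda^{(n+1)})$ to a dual optimizer --- a worry that is well founded, since $\beta(\cdot,w)$ is only lower semicontinuous in general (e.g.\ multivariate CVaR), so I would drop that primary route and keep the sandwich.

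For part (b) you genuinely diverge from the paper. The paper writes $w^{\mathsf{T}}z_{(w)}$ via \eqref{scalar-dual} as a constrained supremum over $\mu$, forms its Lagrangian dual with multipliers $(\Psi,\nu)$, derives the exact first-order condition \eqref{mufirstorder2}, and argues that $(\mu^{(n+1)},\Psi^{(n+1)},\nu^{(n+1)})$ together with the $\bar{\varepsilon}^{(n+1)}$-subgradient $-w\cdot\bar{u}$ satisfies it approximately by rewriting \eqref{mufirstorder}; the step from approximate stationarity to convergence of the objective value is left implicit there. You instead bound the suboptimality gap directly: for arbitrary $\mu\in\M_1^J$ you control the excess of the dual objective over its value at $\mu^{(n+1)}$ by \eqref{betaapproxsbg}, substitute \eqref{mufirstorder} to replace $w\cdot(C\bar{x}_i+Q_i\bar{y}_i-\bar{u}_i)$ by $\Psi^{(n+1)}-\nu_i^{(n+1)}+2\varrho(\mu_i^{(n+1)}-\bar{\mu}_i^{(k)})$, and then use $\sum_{i\in\O}\mu_i=\sum_{i\in\O}\mu_i^{(n+1)}=\1$, complementary slackness $(\nu_i^{(n+1)})^{\mathsf{T}}\mu_i^{(n+1)}=0$ with $\nu_i^{(n+1)},\mu_i\ge 0$, compactness of $\M_1^J$, and \eqref{normconvergence} to make the remainder $o(1)$ uniformly in $\mu$. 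The ingredients are the same as the paper's (\eqref{mufirstorder}, \eqref{betaapproxsbg}, \eqref{normconvergence}, complementary slackness), but your version is more explicit about why approximate stationarity yields approximate optimality of the value, and it has the pleasant by-product, which you note, of upgrading the first limit in part (a) from a triangle-inequality consequence to a two-sided estimate. I see no gap.
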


\begin{proof}
\begin{enumerate}[(a)]
\item Note that
\begin{align}\label{optimalitybound}
\sum_{i\in\O}\vartheta^{(n+1)}_i&= \sum_{i\in\O}\of{w^{\mathsf{T}}\sqb{\mu^{(n+1)}_i \cdot (C\bar{x}_i+Q_i \bar{y}_i)}+p_i (\lambda^{(n+1)}_i)^{\mathsf{T}}\bar{x}_i} \notag \\
&= w^{\mathsf{T}}\E^{\mu^{(n+1)}}\sqb{ C\bar{x}+Q \bar{y}}+\sum_{i\in\O}p_i (\lambda^{(n+1)}_i)^{\mathsf{T}}\bar{x}_i,
\end{align}
where the first equality follows from \eqref{varthetaslackness}. We may rewrite $\sum_{i\in\O}p_i(\lambda_i^{(n+1)})^{\mathsf{T}}\bar{x}_i$ as
\begin{align}\label{plambdax}
\sum_{i\in\O}p_i(\lambda_i^{(n+1)})^{\mathsf{T}}\bar{x}_i&=\sum_{i\in\O}p_i(\lambda_i^{(n+1)})^{\mathsf{T}}(\bar{x}_i-\sigma^{(n+1)})+\sum_{i\in\O}p_i(\lambda_i^{(n+1)})^{\mathsf{T}}\sigma^{(n+1)}\\
&=\sum_{i\in\O}p_i(\lambda_i^{(n+1)})^{\mathsf{T}}(\bar{x}_i-\sigma^{(n+1)})\notag
\end{align}
since $\sigma^{(n+1)}$ is deterministic and $\E\sqb{\lambda^{(n+1)}}=0$. Note that
\begin{equation}\label{lambdabound}
\abs{\sum_{i\in\O} p_i (\lambda_i^{(n+1)})^{\mathsf{T}}(\bar{x}_i-\sigma^{(n+1)})}\leq \sum_{i\in\O}p_i {|\lambda^{(n+1)}_i|}^{\mathsf{T}}|\bar{x}_i-\sigma^{(n+1)}|,
\end{equation}
where $|z|\coloneqq(|z^1|,\ldots,|z^J|)$ for $z\in\R^J$. By the proof of Theorem~7.16 in \cite{ruszbook}, we have $|(\lambda_i^{(n+1)})^j|\leq \tilde{C}$ for every $i\in\O, j\in\mathcal{J}$, where $\tilde{C}>0$ is some constant. Using \eqref{plambdax} and \eqref{lambdabound}, we get
\begin{equation}\label{optimalitybound2}
\abs{\sum_{i\in\O}p_i(\lambda_i^{(n+1)})^{\mathsf{T}}\bar{x}_i}\leq  \tilde{C} \sum_{i\in\O}p_i\1^\mathsf{T}\abs{\bar{x}_i-\sigma^{(n+1)}}.
\end{equation}
Therefore, by Lemma~\ref{feasibility}(a), \eqref{optimalitybound} and \eqref{optimalitybound2},
\begin{equation}\label{absolute}
\abs{\sum_{i\in\O}\vartheta^{(n+1)}_i- w^{\mathsf{T}}\E^{\mu^{(n+1)}}\sqb{ C\bar{x}+Q \bar{y}}}= \abs{\sum_{i\in\O}p_i (\lambda^{(n+1)}_i)^{\mathsf{T}}\bar{x}_i}\rightarrow 0
\end{equation}
as $\varepsilon\rightarrow 0$. On the other hand, by \eqref{approxopt0} and using the fact that $\vartheta_i^{(n+1)}$, $\eta^{(n+1)}$ are upper approximations for $f_i(\mu_i^{(n+1)},\lambda_i^{(n+1)},w)$, $-\beta(\mu^{(n+1)},w)$, respectively, we get
\begin{align}\label{absolute2}
0 & \leq \sum_{i\in\O}\vartheta^{(n+1)}_i-\beta(\mu^{(n+1)},w)-\sum_{i\in\O}f_i(\mu_i^{(n+1)},\lambda_i^{(n+1)},w)+\beta(\mu^{(n+1)},w) \\
&\leq \sum_{i\in\O}\vartheta^{(n+1)}_i+\eta^{(n+1)}-\sum_{i\in\O}f_i(\mu_i^{(n+1)},\lambda_i^{(n+1)},w)+\beta(\mu^{(n+1)},w)\leq \bar{\varepsilon}^{(n+1)}.\notag
\end{align}
By \eqref{normconvergence}, $\bar{\varepsilon}^{(n+1)}=\varepsilon+\phi^{(n+1,k)}\rightarrow 0$ as $\varepsilon\rightarrow 0$. By Lemma~7.17 in \cite{ruszbook}, as $\varepsilon\rightarrow 0$, $\sum_{i\in\O}\vartheta^{(n+1)}_i+\eta^{(n+1)}$ converges to the optimal value of $(D_1(w))$, which is $\mathscr{P}_1(w)$. Hence, $\sum_{i\in\O}f_i(\mu_i^{(n+1)},\lambda_i^{(n+1)},w)-\beta(\mu^{(n+1)},w)$ also converges to $\mathscr{P}_1(w)$ as $\varepsilon\rightarrow 0$. Finally, by triangle inequality, \eqref{absolute} and \eqref{absolute2} yield
\begin{align*}
\abs{w^{\mathsf{T}}\E^{\mu^{(n+1)}}\sqb{C\bar{x}+Q\bar{y}}-\beta(\mu^{(n+1)},w)-\sum_{i\in\O}f_i(\mu_i^{(n+1)},\lambda_i^{(n+1)},w)+\beta(\mu^{(n+1)},w)}\leq \abs{\sum_{i\in\O}p_i (\lambda^{(n+1)}_i)^{\mathsf{T}}\bar{x}_i}+\bar{\varepsilon}^{(n+1)}.
\end{align*}
From \eqref{absolute}, the right hand side converges to zero as $\varepsilon\rightarrow 0$. We conclude that $w^{\mathsf{T}}\E^{\mu^{(n+1)}}\sqb{C\bar{x}+Q\bar{y}}-\beta(\mu^{(n+1)},w)$ also converges to $\mathscr{P}_1(w)$ as $\varepsilon\rightarrow 0$.

\item By \eqref{scalar-dual}, we have
\begin{align*}
w^{\mathsf{T}}z_{(w)}&=\sup_{\mu\in\M_1^J}\of{w^{\mathsf{T}}\E^{\mu}\sqb{C\bar{x}+Q\bar{y}}-\beta(\mu,w)}\\
&=\sup_{\mu\in\L^J}\cb{w^{\mathsf{T}}\E^{\mu}\sqb{C\bar{x}+Q\bar{y}}-\beta(\mu,w)\mid \sum_{i\in\O}\mu_i=\mathbf{1},\mu_i\in\R^J_+,\forall i\in\O}.
\end{align*}
The corresponding Lagrangian dual problem is given by
\[
\inf_{\Psi\in\R^J,\nu\in\L^J_+}\sup_{\mu\in\L^J}\of{w^{\mathsf{T}}\E^{\mu}\sqb{C\bar{x}+Q\bar{y}}-\beta(\mu,w)+\Psi^{\mathsf{T}}\of{\mathbf{1}-\sum_{i\in\O}\mu_i}+\sum_{i\in\O}\nu_i^{\mathsf{T}}\mu_i}.
\]
The optimization over $\mu_1,\ldots,\mu_I$ gives the first-order condition
\begin{equation}\label{mufirstorder2}
w\cdot(C\bar{x}_i+Q_i\bar{y}_i)+\rho_{\mu_i}-\Psi+\nu_i=0, \quad \forall i\in\O,
\end{equation}
for some $\rho_{\mu}=(\rho_{\mu_1},\ldots,\rho_{\mu_I})\in\partial_{\mu}(-\beta)(\mu,w)$.
We argue that $(\mu^{(n+1)},\Psi^{(n+1)},\nu^{(n+1)})$ satisfies \eqref{mufirstorder2} in an approximate sense. Note that we may rewrite \eqref{mufirstorder} as
\begin{align}\label{optcond}
&-2\varrho(\mu_i^{(n+1)}-\bar{\mu}^{(k)}_i)+\sum_{\ell\in\mathcal{L}^{\prime}}\tau_i^{(\ell,n+1)}w\cdot(Cx^{(\ell)}_i+Q_iy_i^{(\ell)})-\sum_{\ell\in\mathcal{L}^{\prime}}\theta^{(\ell,n+1)}w\cdot u_i^{(\ell)}-\Psi^{(n+1)}+\nu_i^{(n+1)}\notag\\
&=-2\varrho(\mu_i^{(n+1)}-\bar{\mu}^{(k)}_i)+w\cdot(C\bar{x}_i+Q_i\bar{y}_i)-w\cdot \bar{u}_i-\Psi^{(n+1)}+\nu_i^{(n+1)}\\
&=0.\notag
\end{align}
As shown in \eqref{betaapproxsbg}, $-w\cdot\bar{u}=-w\cdot(\bar{u}_1,\ldots,\bar{u}_I)$ is an $\bar{\varepsilon}^{(n)}$-subgradient of $-\beta(\cdot,w)$ at $\mu^{(n+1)}$. Hence, there exists a subgradient $\rho_{\mu^{(n+1)}}$ of $-\beta(\cdot,w)$ at $\mu^{(n+1)}$ for each $n$ such that $-w\cdot\bar{u}-\rho_{\mu^{(n+1)}}\rightarrow 0$ as $\varepsilon\rightarrow 0$. On the other hand, by \eqref{normconvergence}, $\mu_i^{(n+1)}-\bar{\mu}^{(k)}_i\rightarrow 0$ as $\varepsilon\rightarrow0$. Therefore, from \eqref{optcond}, \eqref{mufirstorder2} is satisfied by $(\mu^{(n+1)},\Psi^{(n+1)},\nu^{(n+1)})$ and $-w\cdot\bar{u}$ approximately in the sense that
\[
w\cdot(C\bar{x}_i+Q_i\bar{y}_i)-w\cdot \bar{u}_i-\Psi^{(n+1)}+\nu^{(n+1)}_i\rightarrow 0
\]
and $\abs{w^{\mathsf{T}}\E^{\mu^{(n+1)}}\sqb{C\bar{x}+Q\bar{y}}-\beta(\mu^{(n+1)},w)-w^{\mathsf{T}}z_{(w)}}\rightarrow 0$ in the limit as $\varepsilon\rightarrow 0$.
\end{enumerate}
\end{proof}

\begin{proof}[Proof of Theorem~\ref{P1primal}]
Note that $x_{(w)}=\bar{x}$ and $y_{(w)}=\bar{y}$, where $\bar{x},\bar{y}$ are defined by \eqref{xbarybar}. Parts (a) and (c) follow directly from Lemma~\ref{feasibility}. Parts (b) follows directly from the definition of $z_{(w)}$. Part (d) follows from Lemma~\ref{optimality}.
\end{proof}

\section{Proof of Theorem~\ref{P2primal}}\label{proofof4}

In the setting of Theorem~\ref{P2primal}, let $(m^{(n+1)},\lambda^{(n+1)},\vartheta^{(n+1)},\eta^{(n+1)})$ be an optimal solution for $(MP_2(v))$ with index set $\mathcal{L}^\prime=\cb{1,\ldots,n}$. Recall that
\begin{align*}
&F^{(k)}= \sum_{i\in\O}  \tilde{f}_i(m_i^{(k)},\lambda_i^{(k)})-\tilde{\beta}(m^{(k)})-\sum_{i\in\O}(m^{(k)}_i)^{\mathsf{T}}v,\\
&\bar{F}^{(k)}=\sum_{i\in\O}  \tilde{f}_i(\bar{m}^{(k-1)}_i,\bar{\lambda}^{(k-1)}_i)-\tilde{\b}(\bar{m}^{(k-1)})-\sum_{i\in\O}(\bar{m}^{(k-1)}_i)^{\mathsf{T}}v.
\end{align*}
Let us re-define
\begin{align*}
&\phi^{(n+1,k)}\coloneqq \sum_{i\in \O}\varrho \norm{m^{(n+1)}_i-\bar{m}^{(k)}_i}^2 +\sum_{i\in \O}\varrho \norm{\lambda^{(n+1)}_i-\bar{\lambda}^{(k)}_i}^2,\\
&\bar{\varepsilon}^{(n+1)}\coloneqq\varepsilon+\phi^{(n+1,k)}.
\end{align*}
Furthermore, let $\bar{x}=(\bar{x}_i)_{i\in\O}, \bar{y}=(\bar{y}_i)_{i\in\O}$ be defined as
\begin{equation}\label{xbarybar2}
\bar{x}_i\coloneqq \sum_{\ell\in\mathcal{L}^\prime}\tau_i^{(\ell,n+1)}x_i^{(\ell)},\quad \bar{y}_i\coloneqq \sum_{\ell\in\mathcal{L}^\prime}\tau_i^{(\ell,n+1)}y_i^{(\ell)},
\end{equation}
for every $i\in\O$.

\begin{lemma}\label{Lagrangelemma2}
The following relationships hold for $(m^{(n+1)}$, $\lambda^{(n+1)}$, $\vartheta^{(n+1)}$, $\eta^{(n+1)}$, $\tau^{(n+1}$, $\theta^{(n+1)}$, $\sigma^{(n+1)}$, $\psi^{(n+1)}$, $\nu^{(n+1)})$:
\begin{align}
&\sum_{\ell\in\mathcal{L}^\prime}\tau_i^{(\ell,n+1)}=1,\quad \forall i\in\O,\label{convexcomba2}\\
&\sum_{\ell\in\mathcal{L}^\prime}\theta^{(\ell,n+1)}=1,\label{convexcombb2}\\
&\varrho (\lambda_i^{(n+1)}-\bar{\lambda}^{(k)}_i)=\frac12\of{\sum_{\ell\in\mathcal{L}^\prime}\tau_i^{(\ell,n+1)}g_{\lambda_i^{(\ell)}}-\sigma^{(n+1)} p_i},\quad \forall i\in\O\label{firstorder2}\\
& -2\varrho(m_i^{(n+1)}\negthinspace-\negthinspace\bar{m}^{(k)}_i)\negthinspace+\negthinspace\sum_{\ell\in\mathcal{L}^{\prime}}\tau_i^{(\ell,n+1)}g_{m_i^{(\ell)}}\negthinspace +\negthinspace\sum_{\ell\in\mathcal{L}^{\prime}}\theta^{(\ell,n+1)}\rho_{m^{(\ell)}_i}-\psi^{(n+1)}\1 +\nu_i^{(n+1)}=0,\, \forall i\in\O.\label{mufirstorder22}
\end{align}
\end{lemma}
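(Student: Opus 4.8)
The plan is to mirror, step for step, the proof of Lemma~\ref{Lagrangelemma}, now applied to $(MP_2(v))$ with centers $\bar{m}^{(k)},\bar{\lambda}^{(k)}$ and index set $\mathcal{L}^\prime=\cb{1,\ldots,n}$. First I would write out the Lagrangian, assigning the multipliers $\tau_i^{(\ell)}\geq 0$ to the cuts \eqref{p2con1}, $\theta^{(\ell)}\geq 0$ to the cuts \eqref{p2con2}, $\sigma\in\R^M$ to \eqref{p2con3}, $\psi\in\R$ to the scalar normalization \eqref{p2con4}, and $\nu_i\in\R^J_+$ to the sign constraints in \eqref{p2con5}:
\begin{align*}
L(m,\lambda,\vartheta,\eta,\tau,\theta,\sigma,\psi,\nu)
&=\sum_{i\in\O}\vartheta_i+\eta-\sum_{i\in\O}m_i^{\mathsf{T}}v-\sum_{i\in\O}\varrho\norm{m_i-\bar{m}_i^{(k)}}^2-\sum_{i\in\O}\varrho\norm{\lambda_i-\bar{\lambda}_i^{(k)}}^2\\
&\quad+\sum_{i\in\O}\sum_{\ell\in\mathcal{L}^\prime}\tau_i^{(\ell)}\of{\tilde{f}_i(m_i^{(\ell)},\lambda_i^{(\ell)})+g_{m_i^{(\ell)}}^{\mathsf{T}}(m_i-m_i^{(\ell)})+g_{\lambda_i^{(\ell)}}^{\mathsf{T}}(\lambda_i-\lambda_i^{(\ell)})-\vartheta_i}\\
&\quad+\sum_{\ell\in\mathcal{L}^\prime}\theta^{(\ell)}\of{-\tilde{\beta}(m^{(\ell)})+\sum_{i\in\O}\rho_{m_i^{(\ell)}}^{\mathsf{T}}(m_i-m_i^{(\ell)})-\eta}\\
&\quad-\sigma^{\mathsf{T}}\sum_{i\in\O}p_i\lambda_i+\psi\of{1-\sum_{i\in\O}m_i^{\mathsf{T}}\1}+\sum_{i\in\O}\nu_i^{\mathsf{T}}m_i.
\end{align*}
Then I would form the dual objective $h(\tau,\theta,\sigma,\psi,\nu)=\sup_{m\in\L^J,\lambda\in\L^M,\vartheta\in\L,\eta\in\R}L$, write the corresponding dual problem, and invoke the hypothesis that $(m^{(n+1)},\lambda^{(n+1)},\vartheta^{(n+1)},\eta^{(n+1)})$ is primal optimal for $(MP_2(v))$ while $(\tau^{(n+1)},\theta^{(n+1)},\sigma^{(n+1)},\psi^{(n+1)},\nu^{(n+1)})$ is the corresponding dual optimal solution, so that for these values the supremum defining $h$ is attained at $(m^{(n+1)},\lambda^{(n+1)},\vartheta^{(n+1)},\eta^{(n+1)})$.

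The four identities are then read off by maximizing $L$ over one group of primal variables at a time, exactly as in Lemma~\ref{Lagrangelemma}. The variable $\vartheta_i$ enters $L$ linearly with coefficient $1-\sum_{\ell\in\mathcal{L}^\prime}\tau_i^{(\ell)}$, so finiteness of $h$ forces \eqref{convexcomba2}; likewise the coefficient of $\eta$ must vanish, which gives \eqref{convexcombb2}. The variable $\lambda_i$ enters through the strictly concave term $-\varrho\norm{\lambda_i-\bar{\lambda}_i^{(k)}}^2$ together with the linear terms $\sum_{\ell\in\mathcal{L}^\prime}\tau_i^{(\ell)}g_{\lambda_i^{(\ell)}}^{\mathsf{T}}\lambda_i$ and $-p_i\sigma^{\mathsf{T}}\lambda_i$; setting the gradient in $\lambda_i$ to zero at $\lambda_i^{(n+1)}$ and using \eqref{convexcomba2} yields \eqref{firstorder2}. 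Finally, $m_i$ enters through $-\varrho\norm{m_i-\bar{m}_i^{(k)}}^2$, the extra linear term $-m_i^{\mathsf{T}}v$, the cut contributions $\sum_{\ell\in\mathcal{L}^\prime}\tau_i^{(\ell)}g_{m_i^{(\ell)}}^{\mathsf{T}}m_i$ and $\sum_{\ell\in\mathcal{L}^\prime}\theta^{(\ell)}\rho_{m_i^{(\ell)}}^{\mathsf{T}}m_i$, the term $-\psi\,\1^{\mathsf{T}}m_i$ coming from \eqref{p2con4}, and $\nu_i^{\mathsf{T}}m_i$; setting the gradient in $m_i$ to zero at $m_i^{(n+1)}$ produces \eqref{mufirstorder22}. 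Here I would stress that because \eqref{p2con4} is a single scalar equality, its multiplier $\psi^{(n+1)}$ is a scalar and enters the $m_i$-stationarity as the multiple $\psi^{(n+1)}\1$ of the all-ones vector, whereas in $(MP_1(w))$ the vectorial equality $\sum_{i\in\O}\mu_i=\1$ contributed the vector $\Psi^{(n+1)}$ in \eqref{mufirstorder}.

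The main obstacle, as in Lemma~\ref{Lagrangelemma}, is purely one of bookkeeping: one has to keep the signs straight in the Lagrangian of a maximization problem with $\leq$-type cut constraints, and one has to carry through correctly the two features of $(MP_2(v))$ that are absent from $(MP_1(w))$, namely the extra linear term $-\sum_{i\in\O}m_i^{\mathsf{T}}v$ in the objective and the scalar (rather than vectorial) normalization constraint. No convexity or regularity argument beyond the strict concavity supplied by the Moreau--Yosida terms is needed, and the argument terminates once the four identities \eqref{convexcomba2}--\eqref{mufirstorder22} have been obtained; accordingly I would keep the write-up as short as the proof of Lemma~\ref{Lagrangelemma}, recording essentially only the Lagrangian displayed above and noting that the remaining steps are identical.
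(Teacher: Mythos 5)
Your overall strategy is exactly the paper's: write down the Lagrangian of $(MP_2(v))$ with multipliers $\tau,\theta,\sigma,\psi,\nu$, pass to the dual objective $h$, invoke optimality of the primal--dual pair, and read off \eqref{convexcomba2}--\eqref{mufirstorder22} by maximizing $L$ over $\vartheta$, $\eta$, $\lambda$ and $m$ in turn. The derivations of \eqref{convexcomba2}, \eqref{convexcombb2} and \eqref{firstorder2} go through as you describe (for \eqref{firstorder2} you do not actually need \eqref{convexcomba2}, but that is harmless), and your observation that the scalar constraint \eqref{p2con4} contributes $\psi^{(n+1)}\1$ rather than a vector multiplier is the right point to stress.

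There is, however, one concrete inconsistency in your final step. You include the linear term $-\sum_{i\in\O}m_i^{\mathsf{T}}v$ from the objective of $(MP_2(v))$ in your Lagrangian, and you list $-m_i^{\mathsf{T}}v$ among the terms contributing to the gradient in $m_i$; but then stationarity at $m_i^{(n+1)}$ gives
\[
-2\varrho(m_i^{(n+1)}-\bar{m}^{(k)}_i)-v+\sum_{\ell\in\mathcal{L}^{\prime}}\tau_i^{(\ell,n+1)}g_{m_i^{(\ell)}}+\sum_{\ell\in\mathcal{L}^{\prime}}\theta^{(\ell,n+1)}\rho_{m^{(\ell)}_i}-\psi^{(n+1)}\1+\nu_i^{(n+1)}=0,
\]
which is \eqref{mufirstorder22} with an extra $-v$, not \eqref{mufirstorder22} as stated. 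The paper's own proof avoids this only because its displayed Lagrangian \eqref{lagrangian2} omits the $-\sum_{i\in\O}m_i^{\mathsf{T}}v$ term, which is why the stated identity carries no $-v$. You cannot simultaneously keep the term in $L$ and claim that setting the gradient to zero ``produces \eqref{mufirstorder22}''; you must either carry the $-v$ into the identity you prove (note that the downstream first-order condition \eqref{mufirstorder3} does contain $-v$, so this is arguably the version one actually wants) or explicitly drop the term to match the paper's Lagrangian. As written, the last sentence of your derivation does not follow from the Lagrangian you displayed, and this needs to be resolved one way or the other before the proof is complete.
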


\begin{proof}
The Lagrangian for $(MP_2(v))$ with centers $\bar{\mu}^{(k)},\bar{\lambda}^{(k)}$ and index set $\mathcal{L}^{\prime}$ is
\begin{align}\label{lagrangian2}
&L(m,\lambda,\vartheta,\eta,\tau,\theta,\sigma,\psi,\nu)\\
&=\sum_{i\in\O}\vartheta_i+\eta-\sum_{i\in\O}\varrho\norm{m_i-\bar{m}^{(k)}_i}^2-\sum_{i\in\O}\varrho\norm{\lambda_i-\bar{\lambda}^{(k)}_i}^2\notag \\
& \quad +\sum_{i\in\O}\sum_{\ell\in\mathcal{L}^{\prime}}\tau_i^{(\ell)}\of{\tilde{f}_i(m_i^{(\ell)},\lambda_i^{(\ell)}) + g_{m^{(\ell)}_i}^{\mathsf{T}}(m_i-m^{(\ell)}_i) + g_{\lambda^{(\ell)}_i}^{\mathsf{T}}(\lambda_i-\lambda_i^{(\ell)})-\vartheta_i}\notag\\
&\quad +\sum_{\ell\in\mathcal{L}^{\prime}}\theta^{(\ell)}\of{-\tilde{\beta}(m^{(\ell)}) + \sum_{i\in\O}\rho_{m^{(\ell)}_i}^{\mathsf{T}}(m_i-m_i^{(\ell)})-\eta}\notag \\
&\quad -\sigma^{\mathsf{T}}\sum_{i\in\O}p_i\lambda_i+\psi\of{1-\sum_{i\in\O}m_i^{\mathsf{T}}\1}+\sum_{i\in\O}\nu_i^{\mathsf{T}}m_i.\notag
\end{align}
The dual objective function is defined by
\[
h(\tau,\theta,\sigma,\psi,\nu)=\sup_{m\in\L^J,\lambda\in\L^M,\vartheta\in\L,\eta\in\R}L(m,\lambda,\vartheta,\eta,\tau,\theta,\sigma,\psi,\nu),
\]
and the dual problem is
\begin{align*}
&\text{min }\;       h(\tau,\theta,\sigma,\psi,\nu)\;\tag{$D-MP_2(v)$}\\
&\text{s.t.}\quad \tau_i^{(\ell)}\geq 0,\theta^{(\ell)}\geq0,\sigma\in\R^M,\psi\in\R,\nu_i\in\R^J_+.
\end{align*}
Note that $(m^{(n+1)},\lambda^{(n+1)},\vartheta^{(n+1)},\eta^{(n+1)})$ is an optimal solution for $(MP_2(v))$ with centers $\bar{m}^{(k)},\bar{\lambda}^{(k)}$ and index set $\mathcal{L}^{\prime}$, and
\[
\of{\tau^{(n+1)}=(\tau_i^{(\ell,n+1)})_{i\in\O,\ell\in\mathcal{L}^\prime},\theta^{(n+1)}=(\theta^{(\ell,n+1)})_{\ell\in\mathcal{L}^\prime},\sigma^{(n+1)},\psi^{(n+1)},\nu^{(n+1)}=(\nu_i^{(n+1)})_{i\in\O}}
\]
is the corresponding optimal solution for $(D-MP_2(v))$.
The maximization of the Lagrangian over $\vartheta\in\L$ and $\eta\in\R$ gives \eqref{convexcomba2} and \eqref{convexcombb2}, respectively, as constraints for the dual problem. The maximization of the Lagrangian over $\lambda\in\L^M$ gives the first-order condition \eqref{firstorder2}. Finally, the maximization of the Lagrangian over $m\in\L^J$ gives the first-order condition \eqref{mufirstorder22}.
\end{proof}

\begin{lemma}\label{feasibility2}
The following statements hold for every $i\in\O$:
\begin{enumerate}[(a)]
\item As $\varepsilon\rightarrow0$, $\bar{x}_i-\sigma^{(n+1)}\rightarrow 0$.
\item $(\bar{x}_i,\bar{y}_i)\in\mathcal{F}_i$.
\item $(\bar{x},\bar{y})$ will eventually be an element of the set $\cb{(x,y)\in\F\mid p_i(x_i-\E\sqb{x})=0,\;\forall i\in\O}$ as $\varepsilon\rightarrow 0$ in the sense that $(\bar{x},\bar{y})\in\F$ and $p_i(\bar{x}_i-\E\sqb{\bar{x}})\rightarrow 0$ for every $i\in\O$ as $\varepsilon\rightarrow 0$.
\end{enumerate}
\end{lemma}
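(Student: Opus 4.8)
The plan is to mirror the proof of Lemma~\ref{feasibility}, transcribing it from the $(\mu,\gamma)$-based master problem $(MP_1(w))$ to the finite-measure master problem $(MP_2(v))$ and invoking the first-order optimality relations collected in Lemma~\ref{Lagrangelemma2}. All three parts rest on two facts: the dual iterates produce convex-combination weights $\tau_i^{(\ell,n+1)}$ summing to one, and the Moreau--Yosida proximal term $\phi^{(n+1,k)}$ vanishes along the algorithm as $\varepsilon\to 0$.

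For part (a), I would begin from the first-order condition \eqref{firstorder2}, which reads $\varrho(\lambda_i^{(n+1)}-\bar{\lambda}^{(k)}_i)=\frac12(\sum_{\ell\in\mathcal{L}^\prime}\tau_i^{(\ell,n+1)}g_{\lambda_i^{(\ell)}}-\sigma^{(n+1)}p_i)$. Since line~7 of Algorithm~\ref{decomp2} gives $g_{\lambda_i^{(\ell)}}=p_i x_i^{(\ell)}$, and \eqref{xbarybar2} together with \eqref{convexcomba2} gives $\sum_{\ell\in\mathcal{L}^\prime}\tau_i^{(\ell,n+1)}x_i^{(\ell)}=\bar{x}_i$, the right-hand side collapses to $\frac12 p_i(\bar{x}_i-\sigma^{(n+1)})$. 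On the left-hand side, $\varrho\norm{\lambda_i^{(n+1)}-\bar{\lambda}^{(k)}_i}^2\le\phi^{(n+1,k)}$, and as $\varepsilon\to 0$ we have $n\to\infty$, so by Lemma~7.17 in \cite{ruszbook} $\phi^{(n+1,k)}\to 0$; hence $p_i(\bar{x}_i-\sigma^{(n+1)})\to 0$, and since $p_i>0$ the claim follows.

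Part (b) is immediate: by line~6 of Algorithm~\ref{decomp2}, $(x_i^{(\ell)},y_i^{(\ell)})\in\F_i$ for each $\ell\in\mathcal{L}^\prime$, and $(\bar{x}_i,\bar{y}_i)$ is, by \eqref{xbarybar2} and \eqref{convexcomba2}, a convex combination of these points; convexity of $\F_i$ closes the argument. For part (c), \eqref{f-defn} and part (b) give $(\bar{x},\bar{y})\in\F$ for every $\varepsilon>0$, and closedness of $\F$ keeps the limit in $\F$; then part (a) yields $\E\sqb{\sigma^{(n+1)}-\bar{x}}=\sum_{i\in\O}p_i(\sigma^{(n+1)}-\bar{x}_i)\to 0$, so the decomposition $p_i(\bar{x}_i-\E\sqb{\bar{x}})=p_i(\bar{x}_i-\sigma^{(n+1)})+p_i\E\sqb{\sigma^{(n+1)}-\bar{x}}$ shows the nonanticipativity residuals tend to zero.

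I expect the only substantive point to be the convergence $\phi^{(n+1,k)}\to 0$: this is not an algebraic identity but an input from bundle-method convergence theory (boundedness of the dual iterates and asymptotic vanishing of the proximal displacement between consecutive serious steps), which is why one cites \cite[Lemma~7.17]{ruszbook} rather than re-derive it. Once that is granted, the rest is a faithful copy of Lemma~\ref{feasibility} with $m$, $\tilde{f}_i$, $\tilde{\beta}$ playing the roles of $(\mu,\gamma)$, $f_i$, $\beta$.
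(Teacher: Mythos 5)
Your proof is correct and matches the paper's approach exactly: the paper itself omits the proof of this lemma, stating only that it is ``similar to the proof of Lemma~\ref{feasibility}'', and your argument is precisely that transcription, with \eqref{firstorder2}, \eqref{xbarybar2}, \eqref{convexcomba2} and the vanishing of $\phi^{(n+1,k)}$ via \cite[Lemma~7.17]{ruszbook} playing the same roles as their counterparts in Appendix~\ref{proofof2}.
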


\begin{proof}
The proof of this lemma is similar to the proof of Lemma~\ref{feasibility}. Therefore, it is omitted.
\end{proof}

Recall that $\rho_{m^{(\ell)}}=(\rho_{m_1^{(\ell)}},\ldots,\rho_{m_I^{(\ell)}})$ is a subgradient of $-\tilde{\beta}(\cdot,w)$ at $m^{(\ell)}$, for each $\ell\in\mathcal{L}^{\prime}$. From \eqref{betatilde}, there exists some $u^{(\ell)}\in\A$, $\ell\in\mathcal{L}^{\prime}$, such that
\begin{equation}\label{u-ell2}
\rho_{m_i^{(\ell)}}=-u^{(\ell)}_i
\end{equation}
for all $i\in\O$. Since $\A$ is convex, $\theta^{(\ell)}\geq 0$ and $\sum_{\ell\in\mathcal{L}^{\prime}}\theta^{(\ell)}=1$, it follows that
\begin{equation}\label{ubar2}
\bar{u}\coloneqq \sum_{\ell\in\mathcal{L}^{\prime}}\theta^{(\ell)}u^{(\ell)}=\of{\sum_{\ell\in\mathcal{L}^{\prime}}\theta^{(\ell)}u_1^{(\ell)},\ldots,\sum_{\ell\in\mathcal{L}^{\prime}}\theta^{(\ell)}u_I^{(\ell)}}\in\A.
\end{equation}

\begin{lemma}
The followings hold:
\begin{enumerate}[(a)]
\item For each $i\in\O$,
\begin{equation}\label{varthetaslackness2}
\vartheta^{(n+1)}_i=(m^{(n+1)}_i)^{\mathsf{T}} (C\bar{x}_i+Q_i \bar{y}_i)+p_i (\lambda^{(n+1)}_i)^{\mathsf{T}}\bar{x}_i.
\end{equation}
\item For each $i\in\O$,
\begin{equation}\label{approxopt2}
\vartheta^{(n+1)}_i - \tilde{f}_i(m^{(n+1)}_i, \lambda^{(n+1)}_i)\leq\bar{\varepsilon}^{(n+1)}.
\end{equation}
Moreover,
\begin{equation}\label{approxopt02}
\sum_{i\in\O}\vartheta^{(n+1)}_i + \eta^{(n+1)} - \sum_{i\in\O}\tilde{f}_i(m^{(n+1)}_i, \lambda^{(n+1)}_i)+\tilde{\beta}(m^{(n+1)})\leq \bar{\varepsilon}^{(n+1)}.
\end{equation}
\item
\begin{equation}\label{eta-beta2}
\eta^{(n+1)}+\tilde{\beta}(m^{(n+1)})\leq \bar{\varepsilon}^{(n+1)}.
\end{equation}
\item $-\bar{u}$ is an $\bar{\varepsilon}^{(n+1)}$-subgradient of $-\tilde{\beta}(\cdot)$ at $m^{(n+1)}$ in the sense that, for every $m\in\M_f^J$,
\begin{equation}\label{betaapproxsbg2}
-\tilde{\beta}(m) \leq -\tilde{\beta}(m^{(n+1)})+\bar{\varepsilon}^{(n+1)}-\sum_{i\in\O}(m_i-m_i^{(n+1)})^{\mathsf{T}} \bar{u}_i.
\end{equation}
\end{enumerate}
\end{lemma}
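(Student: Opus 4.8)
The plan is to transcribe, almost line for line, the proof of the corresponding lemma for $(MP_1(w))$, replacing the probability measure $\mu$ by the finite measure $m$, the penalty $\beta(\cdot,w)$ by $\tilde{\beta}(\cdot)$, and $f_i(\cdot,\cdot,w)$ by $\tilde{f}_i(\cdot,\cdot)$, while carefully carrying along the extra linear term $-\sum_{i\in\O}m_i^{\mathsf{T}}v$ that now appears both in the objective of $(MP_2(v))$ and in the approximate stopping condition \eqref{approxstop2}. Throughout, I would use the explicit subgradients produced in lines~7 and~9 of Algorithm~\ref{decomp2}, namely $g_{m_i^{(\ell)}}=Cx_i^{(\ell)}+Q_iy_i^{(\ell)}$, $g_{\lambda_i^{(\ell)}}=p_ix_i^{(\ell)}$ and $\rho_{m_i^{(\ell)}}=-u_i^{(\ell)}$ with $u^{(\ell)}\in\A$ (see \eqref{u-ell2}, \eqref{ubar2}), together with the first-order and normalization identities \eqref{convexcomba2}--\eqref{mufirstorder22} of Lemma~\ref{Lagrangelemma2}.

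For part~(a): rewrite constraint \eqref{p2con1} at index $\ell\in\mathcal{L}^\prime$ as $\vartheta_i\le (m_i)^{\mathsf{T}}(Cx_i^{(\ell)}+Q_iy_i^{(\ell)})+p_i\lambda_i^{\mathsf{T}}x_i^{(\ell)}$, using that $\tilde{f}_i(m_i^{(\ell)},\lambda_i^{(\ell)})=(m_i^{(\ell)})^{\mathsf{T}}(Cx_i^{(\ell)}+Q_iy_i^{(\ell)})+p_i(\lambda_i^{(\ell)})^{\mathsf{T}}x_i^{(\ell)}$, so that the affine part of the cut exactly removes the dependence on $(m_i^{(\ell)},\lambda_i^{(\ell)})$. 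Complementary slackness for \eqref{p2con1}, the normalization $\sum_{\ell\in\mathcal{L}^\prime}\tau_i^{(\ell,n+1)}=1$ from \eqref{convexcomba2}, and the definition \eqref{xbarybar2} of $\bar{x}_i,\bar{y}_i$ then give \eqref{varthetaslackness2} by the same convex-combination manipulation as in the weighted-sum case.

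For parts~(b) and~(c): run the telescoping chain of the $(MP_1(w))$ proof. From $\vartheta_i^{(n+1)}\ge\tilde{f}_i(m_i^{(n+1)},\lambda_i^{(n+1)})$ and $\eta^{(n+1)}\ge-\tilde{\beta}(m^{(n+1)})$, bound $\vartheta_i^{(n+1)}-\tilde{f}_i(m_i^{(n+1)},\lambda_i^{(n+1)})-\phi^{(n+1,k)}$ above by $\sum_{i\in\O}\vartheta_i^{(n+1)}+\eta^{(n+1)}-\sum_{i\in\O}\tilde{f}_i(m_i^{(n+1)},\lambda_i^{(n+1)})+\tilde{\beta}(m^{(n+1)})-\phi^{(n+1,k)}$; replace $(m^{(n+1)},\lambda^{(n+1)})$ by the center $(\bar{m}^{(k)},\bar{\lambda}^{(k)})$ via the descent inequality $F^{(n+1)}\ge\bar{F}^{(k+1)}$ from line~14 of Algorithm~\ref{decomp2}, noting that the $-\sum_{i\in\O}m_i^{\mathsf{T}}v$ terms occur symmetrically on both sides and cancel; pass from index set $\mathcal{L}^\prime=\cb{1,\ldots,n}$ to the larger $\mathcal{L}=\cb{1,\ldots,k}$, whose master problem has more cuts and hence a smaller optimal value (here $n\le k$); and finally apply \eqref{approxstop2}. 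This yields \eqref{approxopt2} and, on summation over $i$, \eqref{approxopt02}; the same estimate applied to $\eta^{(n+1)}+\tilde{\beta}(m^{(n+1)})$ gives \eqref{eta-beta2}.

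For part~(d): since $\rho_{m_i^{(\ell)}}=-u_i^{(\ell)}$ with $u^{(\ell)}$ in the argmax set defining $\tilde{\beta}(m^{(\ell)})$, one has $-\tilde{\beta}(m^{(\ell)})=\sum_{i\in\O}\rho_{m_i^{(\ell)}}^{\mathsf{T}}m_i^{(\ell)}$, so \eqref{p2con2} reads $\eta\le\sum_{i\in\O}\rho_{m_i^{(\ell)}}^{\mathsf{T}}m_i$; complementary slackness and $\sum_{\ell\in\mathcal{L}^\prime}\theta^{(\ell,n+1)}=1$ give $\eta^{(n+1)}=\sum_{i\in\O}(m_i^{(n+1)})^{\mathsf{T}}\sum_{\ell\in\mathcal{L}^\prime}\theta^{(\ell,n+1)}\rho_{m_i^{(\ell)}}=-\sum_{i\in\O}(m_i^{(n+1)})^{\mathsf{T}}\bar{u}_i$ by \eqref{u-ell2} and \eqref{ubar2}. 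For any $m\in\M_f^J$, $\bar{u}\in\A$ and the definition \eqref{betatilde} of $\tilde{\beta}$ give $-\tilde{\beta}(m)\le-\sum_{i\in\O}m_i^{\mathsf{T}}\bar{u}_i=\eta^{(n+1)}-\sum_{i\in\O}(m_i-m_i^{(n+1)})^{\mathsf{T}}\bar{u}_i$, and then \eqref{eta-beta2} upgrades $\eta^{(n+1)}$ to $-\tilde{\beta}(m^{(n+1)})+\bar{\varepsilon}^{(n+1)}$, which is exactly \eqref{betaapproxsbg2}. The only step needing genuine attention is part~(b): one must verify that the $-\sum_{i\in\O}m_i^{\mathsf{T}}v$ term cancels correctly across the center comparison, that $n\le k$ so the cut-monotonicity of the master's optimal value is invoked in the right direction, and that the terminating estimate uses \eqref{approxstop2} rather than \eqref{approxstop}; everything else is a routine transcription of the weighted-sum argument.
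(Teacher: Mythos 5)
Your overall plan---transcribing the $(MP_1(w))$ argument with $\mu\mapsto m$, $\beta(\cdot,w)\mapsto\tilde{\beta}(\cdot)$, $f_i(\cdot,\cdot,w)\mapsto\tilde{f}_i(\cdot,\cdot)$, and the extra linear term $-\sum_{i\in\O}m_i^{\mathsf{T}}v$ carried along---is exactly what the paper does, and your parts (a), (c) and (d) match the paper's proof step for step.

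The one place you go wrong is precisely the step you single out as needing attention in part (b). In the setting of Theorem~\ref{P2primal}, $k$ is the iteration at which the approximate stopping condition \eqref{approxstop2} is satisfied and $n+1$ is the first \emph{descent} iteration after that, so $n\geq k$ and $\mathcal{L}=\cb{1,\ldots,k}\subseteq\mathcal{L}^{\prime}=\cb{1,\ldots,n}$---the opposite of your assertion that $n\leq k$ and that $\mathcal{L}$ is the larger set. The inequality you need in the telescoping chain is
\[
\sum_{i\in\O}\vartheta_i^{(n+1)}+\eta^{(n+1)}-\sum_{i\in\O}(m_i^{(n+1)})^{\mathsf{T}}v-\phi^{(n+1,k)}\;\leq\;\sum_{i\in\O}\vartheta_i^{(k+1)}+\eta^{(k+1)}-\sum_{i\in\O}(m_i^{(k+1)})^{\mathsf{T}}v-\phi^{(k+1,k)},
\]
that is, the optimal value of the $\mathcal{L}^{\prime}$-master (with the same center $(\bar{m}^{(k)},\bar{\lambda}^{(k)})$, since no descent step occurs between iterations $k$ and $n$) is at most that of the $\mathcal{L}$-master; this holds because the $\mathcal{L}^{\prime}$-master has \emph{more} cuts and is a maximization problem. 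Under your reading ($\mathcal{L}$ larger, hence the smaller optimal value) the cut-monotonicity yields the reverse inequality, the chain $[\cdots]^{(n+1)}\leq[\cdots]^{(k+1)}\leq\varepsilon$ no longer closes, and part (b)---and with it (c) and the $\bar{\varepsilon}^{(n+1)}$ entering (d)---would not follow. The repair is simply to swap the roles of the two index sets. (For what it is worth, the paper's own verbal justification of this inequality also states the comparison backwards, but its displayed chain runs in the correct direction.) A further cosmetic point: \eqref{approxopt02} is read off directly from the intermediate line $\sum_{i\in\O}\vartheta_i^{(n+1)}+\eta^{(n+1)}-\sum_{i\in\O}\tilde{f}_i(m_i^{(n+1)},\lambda_i^{(n+1)})+\tilde{\beta}(m^{(n+1)})-\phi^{(n+1,k)}\leq\varepsilon$ of the same chain, not by summing \eqref{approxopt2} over $i$, which would only give the weaker bound $I\bar{\varepsilon}^{(n+1)}$.
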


\begin{proof}
Consider $(MP_2(v))$ with centers $\bar{m}^{(k)},\bar{\lambda}^{(k)}$ and index set $\mathcal{L}^{\prime}$.
\begin{enumerate}[(a)]
\item Note that constraint \eqref{p2con1} can be rewritten as
\[
\vartheta_i \leq m_i ^{\mathsf{T}} (Cx_i^{(\ell)}+Q_i y_i^{(\ell)})+p_i \lambda_i^{\mathsf{T}}x^{(\ell)}_i, \quad \forall i\in\O,  \ell\in\mathcal{L}^\prime
\]
since
\[
\tilde{f}_i(m_i^{(\ell)},\lambda_i^{(\ell)})=(m_i^{(\ell)})^\mathsf{T} (Cx^{(\ell)}_i+Q_i y^{(\ell)}_i)+p_i(\lambda^{(\ell)}_i)^\mathsf{T}x^{(\ell)}_i
\]
and $g_{m^{(\ell)}_i}=Cx_i^{(\ell)}+Q_iy_i^{(\ell)}$, $g_{\lambda_i^{(\ell)}} = p_ix_i^{(\ell)}$. From the complementary slackness conditions for constraint \eqref{p2con1} and using \eqref{xbarybar2}, \eqref{convexcomba2}, we get
\begin{align*}
\vartheta^{(n)}_i &=\sum_{\ell\in\mathcal{L}^\prime}\vartheta^{(n+1)}_i\tau^{(\ell,n+1)}_i\\
&= \sum_{\ell\in\mathcal{L}^\prime}\of{(m^{(n+1)}_i)^{\mathsf{T}} (Cx_i^{(\ell)}+Q_i y_i^{(\ell)})+p_i (\lambda^{(n+1)}_i)^{\mathsf{T}}x^{(\ell)}_i}\tau^{(\ell,n+1)}_i\notag \\
&=(m^{(n+1)}_i)^{\mathsf{T}} (C\bar{x}_i+Q_i \bar{y}_i)+p_i (\lambda^{(n+1)}_i)^{\mathsf{T}}\bar{x}_i.\notag
\end{align*}
Hence, \eqref{varthetaslackness2} follows.

\item Note that
\begin{align*}
&\vartheta^{(n+1)}_i - \tilde{f}_i(m^{(n+1)}_i, \lambda^{(n+1)}_i)-\phi^{(n+1,k)}\\
&\leq \sum_{i\in\O}\vartheta^{(n+1)}_i + \eta^{(n+1)} -\sum_{i\in\O}(m_i^{(n+1)})^{\mathsf{T}}v -\sum_{i\in\O}\tilde{f}_i(m^{(n+1)}_i, \lambda^{(n+1)}_i)+\tilde{\beta}(m^{(n+1)})+\sum_{i\in\O}(m_i^{(n+1)})^{\mathsf{T}}v-\phi^{(n+1,k)} \\
&\leq \sum_{i\in\O}\vartheta^{(n+1)}_i + \eta^{(n+1)}-\sum_{i\in\O}(m_i^{(n+1)})^{\mathsf{T}}v - \sum_{i\in\O}\tilde{f}_i(\bar{m}^{(k)}_i, \bar{\lambda}^{(k)}_i)+\tilde{\beta}(\bar{m}^{(k)})+\sum_{i\in\O}(\bar{m}_i^{(k)})^{\mathsf{T}}v-\phi^{(n+1,k)}\\
&\leq \sum_{i\in\O}\vartheta^{(k+1)}_i + \eta^{(k+1)} -\sum_{i\in\O}(m_i^{(k+1)})^{\mathsf{T}}v - \sum_{i\in\O}\tilde{f}_i(\bar{m}^{(k)}_i, \bar{\lambda}^{(k)}_i)+\tilde{\beta}(\bar{m}^{(k)})+\sum_{i\in\O}(\bar{m}_i^{(k)})^{\mathsf{T}}v-\phi^{(k+1,k)}\\
&\leq \varepsilon.
\end{align*}
Here, the first inequality follows since $\vartheta_i^{(n+1)}\geq \tilde{f}_i(m_i^{(n+1)},\lambda_i^{(n+1)})$ for each $i\in\O$ and $\eta^{(n+1)}\geq -\tilde{\beta}(m^{(n+1)})$, the second inequality follows since
\begin{align*}
&F^{(n+1)}=\sum_{i\in\O}\tilde{f}_i(m^{(n+1)}_i, \lambda^{(n+1)}_i)-\tilde{\beta}(m^{(n+1)})-\sum_{i\in\O}(m_i^{(n+1)})^{\mathsf{T}}v\\
&\geq \bar{F}^{(k+1)}=\sum_{i\in\O}\tilde{f}_i(\bar{m}^{(k)}_i, \bar{\lambda}^{(k)}_i)-\tilde{\beta}(\bar{m}^{(k)})-\sum_{i\in\O}(\bar{m}_i^{(k)})^{\mathsf{T}}v
\end{align*}
due to the center update rule in line~14 of Algorithm~\ref{decomp2}, the third inequality follows since the master problem with index set $\mathcal{L}=\cb{1,\ldots,k}$ and center $(\bar{m}^{(k)},\bar{\lambda}^{(k)})$ has a smaller optimal value than the master problem with index set $\mathcal{L}^{\prime}=\cb{1,\ldots,n}$ and center $(\bar{m}^{(k)},\bar{\lambda}^{(k)})$. Finally, the last inequality is by the approximate stopping condition \eqref{approxstop2}. Hence, \eqref{approxopt2} and \eqref{approxopt02} follow.

\item Similar to part (b), we can show that \eqref{eta-beta2} holds.

\item Note that constraint \eqref{p2con2} can be rewritten as
\[
\eta \leq \sum_{i\in\O}\rho_{m^{(\ell)}_i}^{\mathsf{T}}m_i, \quad \forall \ell\in\mathcal{L}^{\prime}
\]
since $-\tilde{\beta}(m^{(\ell)})=\sum_{i\in\O}\rho_{m^{(\ell)}_i}^{\mathsf{T}}m_i^{(\ell)}$ by the definition of subgradient. From the complementary slackness conditions,
\begin{equation}\label{etam2}
\eta^{(n+1)}=\sum_{\ell\in\mathcal{L}^{\prime}}\theta^{(\ell)}\sum_{i\in\O}\rho_{m^{(\ell)}_i}^{\mathsf{T}}m^{(n+1)}_i=\sum_{i\in\O}m^{(n+1)}_i\sum_{\ell\in\mathcal{L}^{\prime}}\theta^{(\ell)}\rho_{m^{(\ell)}_i}^{\mathsf{T}}.
\end{equation}
From \eqref{u-ell2}, \eqref{ubar2}, \eqref{etam2}, it follows that
\begin{equation}\label{etaslackness2}
\eta^{(n+1)} = -\sum_{i\in\O}(m^{(n+1)}_i)^{\mathsf{T}} \bar{u}_i.
\end{equation}
For every $m\in\M_f^J$,
\begin{align*}
-\tilde{\beta}(m)&\leq -\sum_{i\in\O}m_i^{\mathsf{T}}\bar{u}_i\\
&=-\sum_{i\in\O}(m^{(n+1)}_i)^{\mathsf{T}}\bar{u}_i-\sum_{i\in\O}(m_i-m_i^{(n+1)})^{\mathsf{T}} \bar{u}_i\\
& \leq -\tilde{\beta}(m^{(n+1)})+\bar{\varepsilon}^{(n+1)}-\sum_{i\in\O}(m_i-m_i^{(n+1)})^{\mathsf{T}} \bar{u}_i,
\end{align*}
where the first inequality follows from \eqref{betatilde}, and the last inequality follows from \eqref{eta-beta2} and \eqref{etaslackness2}. Hence, the claim follows.
\end{enumerate}
\end{proof}

\begin{lemma}\label{optimality2}
The followings hold:
\begin{enumerate}[(a)]
\item As $\varepsilon\rightarrow0$,
\begin{align*}
&\sum_{i\in\O}(m^{(n+1)}_i)^{\mathsf{T}}(C\bar{x}_i+Q_i\bar{y}_i)-\tilde{\beta}(m^{(n+1)})-\sum_{i\in\O}(m_i^{(n+1)})^{\mathsf{T}}v\rightarrow\mathscr{P}_2(v),\\ &\sum_{i\in\O}\tilde{f}_i(m_i^{(n+1)},\lambda_i^{(n+1)})-\tilde{\beta}(m^{(n+1)})-\sum_{i\in\O}(m_i^{(n+1)})^{\mathsf{T}}v\rightarrow\mathscr{P}_2(v).
\end{align*}
\item Let
\begin{equation}\label{alphabar}
\bar{\alpha}\coloneqq \inf\cb{\alpha\in\R\mid v+\alpha\1\in R(C\bar{x}+Q\bar{y})}.
\end{equation}
Then, $\bar{\alpha}\rightarrow\mathscr{P}_2(v)$ as $\varepsilon\rightarrow 0$.
\end{enumerate}
\end{lemma}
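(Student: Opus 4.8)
The plan is to follow Lemma~\ref{optimality}, which establishes the analogous statement for $(P_1(w))$, adapting it to the finite-measure data $m,\tilde{f}_i,\tilde{\beta}$ and to the single normalization $\sum_{i\in\O}m_i^{\mathsf{T}}\1=1$ that replaces $\sum_{i\in\O}\mu_i=\1$.

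For part (a), I would sum \eqref{varthetaslackness2} over $i\in\O$ to get $\sum_{i\in\O}\vartheta_i^{(n+1)}=\sum_{i\in\O}(m_i^{(n+1)})^{\mathsf{T}}(C\bar{x}_i+Q_i\bar{y}_i)+\sum_{i\in\O}p_i(\lambda_i^{(n+1)})^{\mathsf{T}}\bar{x}_i$. Because $\E\sqb{\lambda^{(n+1)}}=0$ by \eqref{p2con3} and $\sigma^{(n+1)}$ is deterministic, the last sum equals $\sum_{i\in\O}p_i(\lambda_i^{(n+1)})^{\mathsf{T}}(\bar{x}_i-\sigma^{(n+1)})$, which is dominated by $\tilde{C}\sum_{i\in\O}p_i\1^{\mathsf{T}}\abs{\bar{x}_i-\sigma^{(n+1)}}$ using the uniform bound on $\lambda^{(n+1)}$ from the proof of Theorem~7.16 in \cite{ruszbook}, and hence tends to $0$ as $\varepsilon\rightarrow0$ by Lemma~\ref{feasibility2}(a). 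Next, from \eqref{approxopt02} together with $\vartheta_i^{(n+1)}\geq\tilde{f}_i(m_i^{(n+1)},\lambda_i^{(n+1)})$ and $\eta^{(n+1)}\geq-\tilde{\beta}(m^{(n+1)})$ (the latter from \eqref{etaslackness2} and $\bar{u}\in\A$), one has $0\leq\sum_{i\in\O}\vartheta_i^{(n+1)}+\eta^{(n+1)}-\sum_{i\in\O}\tilde{f}_i(m_i^{(n+1)},\lambda_i^{(n+1)})+\tilde{\beta}(m^{(n+1)})\leq\bar{\varepsilon}^{(n+1)}=\varepsilon+\phi^{(n+1,k)}$, and $\bar{\varepsilon}^{(n+1)}\rightarrow0$. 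Since $\sum_{i\in\O}\vartheta_i^{(n+1)}+\eta^{(n+1)}-\sum_{i\in\O}(m_i^{(n+1)})^{\mathsf{T}}v$ converges to the optimal value $\mathscr{P}_2(v)$ of $(D_2(v))$ by Lemma~7.17 in \cite{ruszbook}, the second displayed limit of part (a) follows at once; adding $\eta^{(n+1)}+\tilde{\beta}(m^{(n+1)})\rightarrow0$ (from \eqref{eta-beta2}) and the vanishing of $\sum_{i\in\O}p_i(\lambda_i^{(n+1)})^{\mathsf{T}}\bar{x}_i$ then gives the first.

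For part (b), I would first rewrite $\bar{\alpha}$ by duality: Lemma~\ref{scalarizationlemma} with $u=C\bar{x}+Q\bar{y}$, followed by Lemma~\ref{conversionlemma}, yields $\bar{\alpha}=\sup\big\{\sum_{i\in\O}m_i^{\mathsf{T}}(C\bar{x}_i+Q_i\bar{y}_i)-\sum_{i\in\O}m_i^{\mathsf{T}}v-\tilde{\beta}(m)\mid m\in\M_f^J,\ \sum_{i\in\O}m_i^{\mathsf{T}}\1=1\big\}$, a concave maximization over the compact simplex $\cb{m\mid m_i\in\R^J_+,\ \sum_{i\in\O}m_i^{\mathsf{T}}\1=1}$. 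Its value at the feasible point $m^{(n+1)}$ never exceeds $\bar{\alpha}$, and by part (a) it converges to $\mathscr{P}_2(v)$; so it remains to show $m^{(n+1)}$ is an approximate maximizer. I would do this by matching the first-order condition \eqref{mufirstorder22} of the regularized master problem $(MP_2(v))$ — after inserting $g_{m_i^{(\ell)}}=Cx_i^{(\ell)}+Q_iy_i^{(\ell)}$, $\sum_{\ell}\theta^{(\ell,n+1)}\rho_{m_i^{(\ell)}}=-\bar{u}_i$ (from \eqref{u-ell2},\eqref{ubar2}) and \eqref{xbarybar2},\eqref{convexcomba2} — against the stationarity condition of the $\bar{\alpha}$-problem; the two differ only by the regularization residual $-2\varrho(m_i^{(n+1)}-\bar{m}_i^{(k)})$, which vanishes as $\phi^{(n+1,k)}\rightarrow0$, and by the gap between the $\bar{\varepsilon}^{(n+1)}$-subgradient $-\bar{u}$ of $-\tilde{\beta}$ at $m^{(n+1)}$ (guaranteed by \eqref{betaapproxsbg2}) and a genuine subgradient, which also vanishes. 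Combined with the complementary-slackness identity $(\nu_i^{(n+1)})^{\mathsf{T}}m_i^{(n+1)}=0$ and concavity, this forces the value of the $\bar{\alpha}$-functional at $m^{(n+1)}$ to differ from $\bar{\alpha}$ by a quantity of order $\bar{\varepsilon}^{(n+1)}$ (times the fixed diameter of the simplex), which tends to $0$; hence $\bar{\alpha}\rightarrow\mathscr{P}_2(v)$.

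I expect the main obstacle to be precisely this last matching step: turning the first-order condition \eqref{mufirstorder22} of the Moreau-Yosida regularized master problem into the statement that $m^{(n+1)}$ nearly maximizes the un-regularized dual functional defining $\bar{\alpha}$, while simultaneously controlling the regularization residual and the approximate-subgradient error. Everything else is the bookkeeping already carried out for $(P_1(w))$ in Lemmas~\ref{feasibility} and~\ref{optimality}.
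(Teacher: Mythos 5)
Your proposal is correct and follows essentially the same route as the paper: part (a) via the complementary-slackness identity \eqref{varthetaslackness2}, the vanishing of $\sum_{i\in\O}p_i(\lambda_i^{(n+1)})^{\mathsf{T}}\bar{x}_i$, the bound \eqref{approxopt02}, and Lemma~7.17 of \cite{ruszbook}; part (b) via rewriting $\bar{\alpha}$ through Lemmas~\ref{scalarizationlemma} and~\ref{conversionlemma} and matching the first-order condition \eqref{mufirstorder22} (with vanishing regularization residual and $\bar{\varepsilon}^{(n+1)}$-subgradient gap) against the stationarity condition of the unregularized problem. The obstacle you flag at the end is precisely the step the paper also treats, to the same level of rigor.
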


\begin{proof}
\begin{enumerate}[(a)]
\item By \eqref{varthetaslackness2},
\begin{equation}\label{newequation}
\sum_{i\in\O}\vartheta^{(n+1)}_i=\sum_{i\in\O}(m^{(n+1)}_i)^{\mathsf{T}} (C\bar{x}_i+Q_i \bar{y}_i)+\sum_{i\in\O}p_i (\lambda^{(n+1)}_i)^{\mathsf{T}}\bar{x}_i.
\end{equation}
Using Lemma~\ref{feasibility2}(a), \eqref{newequation} and \eqref{optimalitybound2}, we obtain
\begin{equation}\label{optimalitybound22}
\abs{\sum_{i\in\O}\vartheta^{(n+1)}_i - \sum_{i\in\O}(m^{(n+1)}_i)^{\mathsf{T}} (C\bar{x}_i+Q_i \bar{y}_i)}=\abs{\sum_{i\in\O}p_i (\lambda^{(n+1)}_i)^{\mathsf{T}}\bar{x}_i}\rightarrow 0
\end{equation}
as $\varepsilon\rightarrow 0$. On the other hand, by \eqref{approxopt02} and using the fact that $\vartheta_i^{(n+1)}, \eta^{(n+1)}$ are upper approximations for $\tilde{f}_i(m_i^{(n+1)},\lambda_i^{(n+1)}), -\tilde{\b}(m^{(n+1)})$, respectively, we get
\begin{align}\label{absolute21}
0 &\leq \sum_{i\in\O}\vartheta^{(n+1)}_i-\tilde{\b}(m^{(n+1)})-\sum_{i\in\O}(m_i^{(n+1)})^{\mathsf{T}}v-\sum_{i\in\O}\tilde{f}_i(m_i^{(n+1)},\lambda_i^{(n+1)})+\tilde{\b}(m^{(n+1)})+\sum_{i\in\O}(m_i^{(n+1)})^{\mathsf{T}}v \\
&\leq \sum_{i\in\O}\vartheta^{(n+1)}_i+\eta^{(n+1)}-\sum_{i\in\O}\tilde{f}_i(m_i^{(n+1)},\lambda_i^{(n+1)})+\tilde{\b}(m^{(n+1)})\leq \bar{\varepsilon}^{(n+1)}.\notag
\end{align}

By an analog of \eqref{normconvergence}, $\bar{\varepsilon}^{(n+1)}=\varepsilon+\phi^{(n+1,k)}\rightarrow 0$ as $\varepsilon\rightarrow 0$. By Lemma~7.17 in \cite{ruszbook}, as $\varepsilon\rightarrow 0$, $\sum_{i\in\O}\vartheta^{(n+1)}_i+\eta^{(n+1)}-\sum_{i\in\O}(m_i^{(n+1)})^{\mathsf{T}}v$ converges to the optimal value of $(D_2(v))$, which is $\mathscr{P}_2(v)$. Hence, $\sum_{i\in\O}\tilde{f}_i(m_i^{(n+1)},\lambda_i^{(n+1)})-\tilde{\beta}(m^{(n+1)})-\sum_{i\in\O}(m_i^{(n+1)})^{\mathsf{T}}v$ also converges to $\mathscr{P}_2(v)$ as $\varepsilon\rightarrow 0$. Finally, by triangle inequality, \eqref{optimalitybound22} and \eqref{absolute21} yield
\begin{align*}
&\Bigg\vert\sum_{i\in\O}(m^{(n+1)}_i)^{\mathsf{T}} (C\bar{x}_i+Q_i \bar{y}_i)-\tilde{\beta}(m^{(n+1)})-\sum_{i\in\O}(m_i^{(n+1)})^{\mathsf{T}}v\\
&-\of{\sum_{i\in\O}\tilde{f}_i(m_i^{(n+1)},\lambda_i^{(n+1)})-\tilde{\beta}(m^{(n+1)})-\sum_{i\in\O}(m_i^{(n+1)})^{\mathsf{T}}v}\Bigg\vert\leq \abs{\sum_{i\in\O}p_i (\lambda^{(n+1)}_i)^{\mathsf{T}}\bar{x}_i}+\bar{\varepsilon}^{(n+1)}.
\end{align*}
From \eqref{optimalitybound22}, the right hand side of the above inequality converges to zero as $\varepsilon\rightarrow 0$. We conclude that $\sum_{i\in\O}(m^{(n+1)}_i)^{\mathsf{T}} (C\bar{x}_i+Q_i \bar{y}_i)-\tilde{\beta}(m^{(n+1)})-\sum_{i\in\O}(m_i^{(n+1)})^{\mathsf{T}}v$ also converges to $\mathscr{P}_2(v)$ as $\varepsilon\rightarrow 0$.

\item By Lemma~\ref{conversionlemma} and Lemma~\ref{scalarizationlemma}, we have
\begin{align*}
\bar{\alpha}&=\sup\cb{\gamma^\mathsf{T}\of{\E^\mu\sqb{C\bar{x}+Q\bar{y}}-v}-\b(\mu,\gamma)\mid \mu\in\M_1^J,\; \gamma^\mathsf{T}\1=1, \gamma\in\R^J_+ }\\
&=\sup\cb{\sum_{i\in\O}m_i^{\mathsf{T}}\of{C\bar{x}_i+Q_i\bar{y}_i}-\sum_{i\in\O}m_i^{\mathsf{T}}v-\tilde{\b}(m)\mid m\in\M_f^J,\; \sum_{i\in\O}m_i^\mathsf{T}\1=1 }\\
&=\sup_{m\in\L^J}\cb{\sum_{i\in\O}m_i^{\mathsf{T}}\of{C\bar{x}_i+Q_i\bar{y}_i}-\sum_{i\in\O}m_i^{\mathsf{T}}v-\tilde{\b}(m)\mid \sum_{i\in\O}m_i^\mathsf{T}\1=1, m_i\in\R^J_+,\forall i\in\O }.
\end{align*}
The corresponding Lagrangian dual problem is given by
\[
\inf_{\psi\in\R^J,\nu\in\L^J_+}\sup_{m\in\L^J}\of{\sum_{i\in\O}m_i^{\mathsf{T}}\of{C\bar{x}_i+Q_i\bar{y}_i}-\sum_{i\in\O}m_i^{\mathsf{T}}v-\tilde{\b}(m)+\psi^{\mathsf{T}}\of{\mathbf{1}-\sum_{i\in\O}m_i}+\sum_{i\in\O}\nu_i^{\mathsf{T}}m_i}.
\]
The optimization over $m_1,\ldots,m_I$ gives the first-order condition
\begin{equation}\label{mufirstorder3}
C\bar{x}_i+Q_i\bar{y}_i-v+\rho_{m_i}-\psi+\nu_i=0, \quad \forall i\in\O,
\end{equation}
for some $\rho_{m}=(\rho_{m_1},\ldots,\rho_{m_I})\in\partial_{m}(-\tilde{\beta})(m)$.
We argue that $(m^{(n+1)},\psi^{(n+1)},\nu^{(n+1)})$ satisfies \eqref{mufirstorder3} in an approximate sense. Note that we may rewrite \eqref{mufirstorder22} as
\begin{align}\label{optcond2}
&-2\varrho(m_i^{(n+1)}\negthinspace-\negthinspace\bar{m}^{(k)}_i)\negthinspace+\negthinspace\sum_{\ell\in\mathcal{L}^{\prime}}\tau_i^{(\ell,n+1)}\of{Cx^{(\ell)}_i+Q_iy_i^{(\ell)}}\negthinspace +\negthinspace\sum_{\ell\in\mathcal{L}^{\prime}}\theta^{(\ell,n+1)}u_i^{(\ell)}-\psi^{(n+1)}\1 +\nu_i^{(n+1)}\notag \\
&=-2\varrho(m_i^{(n+1)}\negthinspace-\negthinspace\bar{m}^{(k)}_i)\negthinspace+\negthinspace\of{C\bar{x}_i+Q_i\bar{y}_i}\negthinspace +\negthinspace\bar{u}_i-\psi^{(n+1)}\1 +\nu_i^{(n+1)}\\
&=0.\notag
\end{align}
As shown in \eqref{betaapproxsbg2}, $-\bar{u}=-(\bar{u}_1,\ldots,\bar{u}_I)$ is an $\bar{\varepsilon}^{(n+1)}$-subgradient of $-\tilde{\beta}(\cdot)$ at $m^{(n+1)}$. Hence, there exists a subgradient $\rho_{m^{(n+1)}}$ of $-\tilde{\beta}(\cdot)$ at $m^{(n+1)}$ for each $n$ such that $-\bar{u}-\rho_{m^{(n+1)}}\rightarrow 0$ as $\varepsilon\rightarrow 0$. On the other hand, by an analog of \eqref{normconvergence}, $m_i^{(n+1)}-\bar{m}^{(k)}_i\rightarrow 0$ as $\varepsilon\rightarrow0$. Therefore, from \eqref{optcond2}, \eqref{mufirstorder3} is satisfied by $(m^{(n+1)},\psi^{(n+1)},\nu^{(n+1)})$ and $-\bar{u}$ approximately in the sense that
\[
C\bar{x}_i+Q_i\bar{y}_i-\bar{u}_i-\psi^{(n+1)}+\nu^{(n+1)}_i\rightarrow 0
\]
and
\begin{equation}\label{asymptotic}
\abs{\sum_{i\in\O}(m^{(n+1)}_i)^{\mathsf{T}}\of{C\bar{x}_i+Q_i\bar{y}_i}-\sum_{i\in\O}(m^{(n+1)}_i)^{\mathsf{T}}v-\tilde{\b}(m^{(n+1)})-\bar{\alpha}}\rightarrow 0
\end{equation}
in the limit as $\varepsilon\rightarrow 0$.
\end{enumerate}
\end{proof}

\begin{proof}[Proof of Theorem~\ref{P2primal}]
Note that $x_{(v)}=\bar{x}$, $y_{(v)}=\bar{y}$, $\a_{(v)}=\bar{\a}$,where $\bar{x},\bar{y},\bar{\a}$ are defined by \eqref{xbarybar2} and \eqref{alphabar}. Parts (a) and (c) follow directly from Lemma~\ref{feasibility2}. Parts (b) and (d) follow from Lemma~\ref{optimality2}.
\end{proof}

\section{Proof of Theorem~\ref{ld2sol}}\label{proofof5}

\begin{proof}[Proof of Theorem~\ref{ld2sol}]

For the objective function of $(LD_2(v))$ evaluated at $\gamma=\gamma_{(v)}$, we have
\begin{align}
&\inf_{(x,y)\in\X,\a\in\R}\of{\a+\inf_{z\in R(Cx+Qy)-v-\alpha\1}\gamma_{(v)}^{\mathsf{T}}z}\notag\\
&=\inf_{\substack{(x,y)\in\F,\a\in\R,\\p_i(x_i-\E\sqb{x})=0, \forall i\in\O}}\of{\a+\inf_{z\in R(Cx+Qy)-v-\alpha\1}\gamma_{(v)}^{\mathsf{T}}z}\notag\\
&=\inf_{\substack{(x,y)\in\F,\\p_i(x_i-\E\sqb{x})=0, \forall i\in\O}} \sup_{\substack{m\in\M_f^J,\\ \sum_{i\in\O}m_i=\gamma_{(v)} }}\of{\sum_{i\in\O}m_i^{\mathsf{T}}(Cx_{i}+Q_iy_i)-\tilde{\beta}(m)-\gamma^{\mathsf{T}}_{(v)}v}\notag \\
&=\sup_{\lambda\in \L^M}\inf_{(x,y)\in\F}\of{ \sup_{\substack{m\in\M_f^J,\\ \sum_{i\in\O}m_i=\gamma_{(v)} }}\of{\sum_{i\in\O}m_i^{\mathsf{T}}(Cx_{i}+Q_iy_i)-\tilde{\beta}(m)-\sum_{i\in\O}m_i^{\mathsf{T}}v}+\sum_{i\in\O}p_i\lambda_i^{\mathsf{T}}(x_i-\E\sqb{x})}\notag\\
&=\sup_{\substack{\lambda\in \L^M,\\\E\sqb{\lambda}=0}}\inf_{(x,y)\in\F}\of{ \sup_{\substack{m\in\M_f^J,\\ \sum_{i\in\O}m_i=\gamma_{(v)} }}\of{\sum_{i\in\O}m_i^{\mathsf{T}}(Cx_{i}+Q_iy_i)-\tilde{\beta}(m)-\sum_{i\in\O}m_i^{\mathsf{T}}v}+\sum_{i\in\O}p_i\lambda_i^{\mathsf{T}}x_i}\notag\\
&=\sup_{\substack{\lambda\in \L^M,m\in\M_f^J,\\\E\sqb{\lambda}=0,\\ \sum_{i\in\O}m_i=\gamma_{(v)}}}\of{ \sum_{i\in\O}\of{\inf_{(x_i,y_i)\in\F_i}\of{m_i^{\mathsf{T}}(Cx_{i}+Q_iy_i)+p_i\lambda_i^{\mathsf{T}}x_i}-m_i^{\mathsf{T}}v }-\tilde{\beta}(m)}\notag\\
&=\sup_{\substack{\lambda\in \L^M,m\in\M_f^J,\\\E\sqb{\lambda}=0,\\ \sum_{i\in\O}m_i=\gamma_{(v)}}}\of{\sum_{i\in\O}(\tilde{f}_i(m_i,\lambda_i)-m_i^{\mathsf{T}}v )-\tilde{\beta}(m)}\label{maxtildef1}\\
&\leq \sup_{\substack{\lambda\in \L^M,m\in\M_f^J,\\\E\sqb{\lambda}=0,\\ \sum_{i\in\O}m_i^{\mathsf{T}}\1=1}}\of{\sum_{i\in\O}(\tilde{f}_i(m_i,\lambda_i)-m_i^{\mathsf{T}}v )-\tilde{\beta}(m)}\label{maxtildef2}\\
&=\mathscr{P}_2(v),\notag
\end{align}
where the first equality is obvious, the second equality follows by \eqref{scalar-dual}, \eqref{p2con4} and Lemma~\ref{conversionlemma}, the third and fourth equalities follow by the dualization of the nonanticipativity constraints as in the proof of Theorem~\ref{p1thm}, the fifth equality is by the interchange of infimum and supremum using \cite{sion}, the sixth equality follows by the definition of $\tilde{f}_i(\cdot,\cdot)$ in \eqref{tildef}. The inequality in \eqref{maxtildef2} follows since the feasible region of the problem in \eqref{maxtildef1} is a subset of the feasible region of the problem in \eqref{maxtildef2}. The last equality is by Theorem~\ref{p2thm}.

Since $(\lambda^{(n+1)},m^{(n+1)})$ is a feasible solution for the maximization problem in \eqref{maxtildef1}, we have
\[
\sum_{i\in\O}(\tilde{f}_i(m^{(n+1)}_i,\lambda^{(n+1)}_i)-(m^{(n+1)}_i)^{\mathsf{T}}v )-\tilde{\beta}(m^{(n+1)})\leq \sup_{\substack{\lambda\in \L^M,m\in\M_f^J,\\\E\sqb{\lambda}=0,\\ \sum_{i\in\O}m_i=\gamma_{(v)}}}\of{\sum_{i\in\O}(\tilde{f}_i(m_i,\lambda_i)-m_i^{\mathsf{T}}v )-\tilde{\beta}(m)}\leq \mathscr{P}_2(v).
\]
As $\varepsilon\rightarrow\infty$, by Lemma~\ref{optimality2}(a), the first expression converges to $\mathscr{P}_2(v)$, and so does the second by sandwich theorem. This finishes the proof of \eqref{triangle2}.

\end{proof}

\end{document}